\documentclass[reqno,11pt,twoside]{amsart}
\usepackage{amssymb,graphicx}
\usepackage{fullpage}
\numberwithin{equation}{section}

\addtolength{\textwidth}{1.6cm}
\addtolength{\evensidemargin}{-.8cm}
\addtolength{\oddsidemargin}{-.8cm}

\usepackage{xypic} 
\usepackage{mathrsfs}
\usepackage{hyperref}
\hypersetup{ 
colorlinks,
citecolor=black,
filecolor=blue,
linkcolor=black,
urlcolor=blue 
} 
\usepackage{comment}
\usepackage{color}
\xyoption{curve}

\numberwithin{equation}{subsection}
\newtheorem{lemma}[equation]{Lemma}
\newtheorem{lem/def}[equation]{Lemma/Definition}
\newtheorem{proposition}[equation]{Proposition}
\newtheorem{theorem}[equation]{Theorem}
\newtheorem{corollary}[equation]{Corollary}

\theoremstyle{definition}
\newtheorem{example}[equation]{Example}
\newtheorem{definition}[equation]{Definition}

\newtheorem{hypotheses}[equation]{Hypotheses}
\newtheorem*{construction_pg}{Construction of $p_g$}
\newtheorem{observation}[equation]{Observation}

\newcommand{\ba}{\begin{array}}
\newcommand{\bu}
{\bullet}

\newcommand{\ea}{\end{array}}
\newcommand{\Hom}{\mathrm{Hom}}
\newcommand{\K}{\widetilde{K}}

\newcommand{\Spec}{\mathrm{Spec}}
\newcommand{\ot}{\otimes}
\newcommand{\ox}{\otimes}

\newcommand{\xym}{\xymatrix}
\newcommand{\codim}{\mathrm{codim}}
\newcommand{\unl}{\underline}

\newcommand{\Wedge}{\textstyle\bigwedge} 

\DeclareMathOperator{\coh}{H}
\DeclareMathOperator{\HH}{HH}
\DeclareMathOperator{\id}{id}
\DeclareMathOperator{\im}{Im}
\DeclareMathOperator{\sgn}{sgn}

\bibliographystyle{plain}


\title[Lie bracket on Hochschild cohomology]{The Gerstenhaber bracket as a Schouten bracket for polynomial rings extended by finite groups}
\date{June 1, 2017}

\author{Cris Negron}
\address{Department of Mathematics, Massachusetts Institute of Technology\\ Cambridge, MA 02139, USA}
\email{negronc@mit.edu}

\author{Sarah Witherspoon}
\address{Department of Mathematics\\Texas A\&M University\\College Station, TX 77843,
USA}
\email{sjw@math.tamu.edu}
\thanks{The first author was supported by NSF Postdoctoral fellowship DMS-1503147.
The second author was supported by NSF grant DMS-1401016.}

\begin{document}

\maketitle
\begin{abstract}
We apply new techniques to compute Gerstenhaber brackets on the Hochschild cohomology of a skew group algebra formed from a polynomial ring and a finite group (in characteristic 0). We  show that the Gerstenhaber brackets can always be expressed in terms of Schouten brackets on polyvector fields.  We obtain as consequences some conditions under which brackets are always 0, and show that the Hochschild cohomology is a graded Gerstenhaber algebra under the codimension grading, strengthening known results. 
\end{abstract}

\section{Introduction}

We compute  brackets on the Hochschild cohomology of
a skew group algebra formed from a symmetric algebra (i.e.\ polynomial ring)
and a finite group in characteristic~0.
Our results strengthen those given by Halbout and Tang~\cite{Halbout-Tang10} and by 
Shepler and the second author~\cite{SW2}: 
The Hochschild cohomology decomposes as a direct sum 
indexed by conjugacy classes of the group.
In~\cite{Halbout-Tang10,SW2} the authors give  formulas for Gerstenhaber brackets in terms of this
decomposition, compute examples, and present vanishing results. 
Here we go further and show  that  brackets are
always  sums of projections of Schouten brackets onto these group components.
As just one consequence, a bracket of two nonzero 
cohomology classes supported {\em off} the kernel of the
group action is always 0 when their homological degrees are smallest possible.
Our results complete the picture begun in \cite{Halbout-Tang10,SW2}, facilitated here
by new techniques from~\cite{Negron-Witherspoon}.

From a theoretical perspective, the category of modules over the skew group algebra under consideration here is equivalent to the category of equivariant quasi-coherent sheaves on the corresponding affine space.  So the Hochschild cohomology of the skew group algebra, along with the Gerstenhaber bracket, is reflective of the deformation theory of this category.  This relationship can be realized explicitly through the work of Lowen and Van den Bergh \cite{lowenvandenbergh05}.  The Hochschild cohomology of the skew group algebra is also strongly related to Chen and Ruan's orbifold cohomology (see e.g. \cite{dolgushevetingof05,pflaumetal11}). 

We briefly summarize our main results.
If $V$ is a finite dimensional vector space with an action of a finite group
$G$, there is an induced action of $G$ by automorphisms on the symmetric
algebra $S(V)$, and one may form the skew group algebra
(also known as a smash product or semidirect product)  $S(V)\# G$.  Its Hochschild cohomology $H:=\HH^\bu(S(V)\# G)$ is isomorphic to 
the $G$-invariant subspace of a direct sum $\oplus_{g\in G} H_g$, and the
$G$-action permutes the components via the conjugation action of $G$ on itself.
See for example~\cite{farinati,ginzburgkaledin04,NPPT}; 
we give some details as needed in Section~\ref{cohomcomp}. 
\par

Each space $H_g$ may be viewed in a canonical way as a subspace of 
$S(V)\ot \Wedge^\bu V^*$, and we construct canonical projections $p_g:S(V)\ot \Wedge^\bu V^\ast\to H_g$.  Since the space $S(V)\ot \Wedge^\bu V^*$ can be identified with the algebra of polyvector fields on affine space, it admits a canonical graded Lie bracket, namely the Schouten bracket
(also known as the Schouten-Nijenhuis bracket), which we denote here by $\{\ ,\ \}$.  By way of the inclusions $H_g\subset S(V)\ot \Wedge^\bu V^\ast$ we may apply the Schouten bracket to elements of $H_g$, and hence to elements in the Hochschild cohomology $\HH^\bu(S(V)\# G)=(\oplus_{g\in G} H_g)^G$.  Our main theorem is:

\medskip

\noindent
{\bf Theorem \ref{thm:gh}}. {\em Let $X =\sum_{g\in G}X_g$ and $Y=\sum_{h\in G}Y_h$
be classes in $\HH^{\bu}(S(V)\# G)$ where $X_g\in H_g$, $Y_h\in H_h$. Their Gerstenhaber bracket is }
\[
    [X,Y] = \sum_{g,h\in G}  p_{gh} \{ X_g,Y_h\}.
\]

\smallskip

This result was obtained by Halbout and Tang~\cite[Theorem~4.4, Corollary~4.11]{Halbout-Tang10} in
some special cases. 

In the body of the text, we assign to each summand $H_g$ its own copy of $S(V)\ot \Wedge^\bu V^\ast$ and label this copy with a $g$.  So we will write instead $S(V)\ot \Wedge^\bu V^\ast g$, and write elements in $S(V)\ot \Wedge^\bu V^\ast g$ as $X_gg$ instead of just $X_g$.

We note that all of the ingredients in the expressions $\sum p_{gh} \{ X_g,Y_h\}$ are canonically defined.  Thus we have closed-form expressions for Gerstenhaber brackets on classes in arbitrary degree.  There are very few  algebras for which we have such an understanding of the graded Lie structure on Hochschild cohomology, aside from smooth commutative algebras (over a given base field).  For a smooth commutative algebra $R$ there is the well known HKR isomorphism~\cite{HKR} between the Hochschild cohomology of $R$ and polyvector fields on $\mathrm{Spec}(R)$, along with the Schouten bracket.  The particular form of Theorem \ref{thm:gh} is referential to this classic result.  Having such a complete understanding of the Lie structure is useful, for example, in the production of $L_\infty$-morphisms and formality results (see \cite{ginothalbout03,MK}).

The proof of Theorem~\ref{thm:gh} uses the  approach to 
Gerstenhaber brackets given in~\cite{Negron-Witherspoon}, in 
which we introduced new techniques that 
are particularly well-suited to  computations.  
We summarize the necessary material from~\cite{Negron-Witherspoon} in Section~\ref{sec:prelim}, explaining how to apply it to skew group algebras. 
In Sections~\ref{polyrings} and \ref{sec:circ-proj}, we develop further 
the theory needed to apply the techniques to the skew group algebra $S(V)\# G$
in particular. 

We obtain a number of consequences of Theorem~\ref{thm:gh} in 
Sections~\ref{sec:brackets} and \ref{sec:nonvanishing}. 
In Corollary~\ref{cor:brackform0} we recover \cite[Corollary 7.4]{SW2},
stating that in case $X,Y$ are supported entirely on group elements
acting trivially on $V$, the Gerstenhaber bracket is simply the
sum of the componentwise Schouten brackets. 
In Corollary~\ref{cor:vanish1} we recover \cite[Proposition 8.4]{SW2}, giving
some conditions on invariant subspaces under which the bracket $[X,Y]$
is known to vanish.
Corollary~\ref{thm:supp-off-id} is another vanishing result that generalizes
\cite[Theorem 9.2]{SW2} from degree 2 to arbitrary degree, stating that
in case $X,Y$ are supported entirely off the kernel of the group action
and their homological degrees are smallest possible, their 
Gerstenhaber bracket is 0.  In Corollary~\ref{cor:codimgrading} we also show that the Hochschild cohomology $\HH^\bu(S(V)\# G)$ is a graded Gerstenhaber algebra with respect to a certain natural grading coming from the geometry of the fixed spaces $V^g$, which we refer to as the codimension grading.
Section~\ref{sec:nonvanishing} consists of examples and a general
explanation of (non)vanishing of the Gerstenhaber bracket for $S(V)\#G$,
rephrasing some of the results of \cite{SW2}.

Let $k$ be a field and $\ot = \ot_k$.
For our main results, we assume the characteristic of $k$ is 0, 
but this is not needed for the general techniques presented in Section~\ref{sec:prelim}.  We adopt the convention that a group $G$ will act on the left of an algebra, and on the right of functions from that algebra.  Similarly, we will let $G$ act on the left of the finite dimensional vector space 
$V\subset S(V)$ and on the right of its dual space $V^\ast$.

\section{An alternate approach to the Lie bracket}\label{sec:prelim} 

Let $A$ be an algebra over the field $k$. 
Let $B\rightarrow A$ denote the {\em bar resolution} of $A$ as an $A$-bimodule:
\[
    \cdots \stackrel{\delta_3}{\longrightarrow} A^{\ot 4}
\stackrel{\delta_2}{\longrightarrow} A^{\ot 3}
\stackrel{\delta_1}{\longrightarrow} A\ot A \stackrel{\mu}{\longrightarrow}
  A\rightarrow 0 ,
\]
where $\mu$ denotes multiplication and 
\begin{equation}\label{eqn:bar-diff}
\delta_n(a_0\ot \cdots\ot a_{n+1})
= \sum_{i=0}^{n} (-1)^i a_0\ot\cdots\ot a_i a_{i+1}\ot \cdots\ot a_{n+1}
\end{equation}
for all $a_0,\ldots,a_{n+1}\in A$. 
The {\em Gerstenhaber bracket} of homogeneous functions $f,g\in \Hom_{A^e}(B,A)$
is defined to be
\[
    [f,g] = f\circ g - (-1)^{(|f|-1)(|g|-1)} g\circ f,
\]
where the circle product $f\circ g$ is given by 
\[
\begin{aligned}
&   (f\circ g) (a_1\ot\cdots\ot a_{|f|+|g|-1}) \\
 & \hspace{.2cm} = \sum_{j=1}^{|f|} (-1)^{(|g|-1)(j-1)} 
  f(a_1\ot \cdots \ot a_{j-1}\ot g(a_j\ot\cdots\ot a_{j+|g|-1})\ot 
  \cdots\ot a_{|f|+|g|-1}), 
\end{aligned}
\]
for all $a_1,\ldots, a_{|f|+|g|-1}\in A$, 
and similarly $g\circ f$.
This induces the bracket on Hochschild cohomology. 
(Here we have identified $\Hom_{A^e}(B,A)$ with $\Hom_k (A^{\ot \bu}, A)$.)

Typically when one computes brackets, one uses explicit chain maps 
between this bar resolution $B$ and a more convenient one for computational
purposes, navigating back and forth.
Such chain maps are usually awkward, and this way can be inefficient
and technically difficult. 
In this section, we first recall from \cite{Negron-Witherspoon} an alternate
approach, for some types of algebras, introduced to avoid this trouble. 
Then we explain
how to apply it  to skew group algebras in particular.

\subsection{A collection of brackets} 
Given a bimodule resolution $K\to A$ satisfying some conditions as detailed below, one can produce a number of coarse brackets $[\ ,\ ]_\phi$ on the complex $\Hom_{A^e}(K,A)$, each depending on a map $\phi$.  These brackets are coarse in the sense that they will not, in general, produce dg Lie algebra structures on the complex $\Hom_{A^e}(K,A)$.  They will, however, be good enough to compute the Gerstenhaber bracket on the cohomology $\coh^{\bu}(\Hom_{A^e}(K,A))=\HH^{\bu}(A)$.  We have precisely: 

\begin{theorem}[{\cite[Theorem 3.2.5]{Negron-Witherspoon}}]\label{thm:NW} 
Suppose $\mu:K\to A$ is a projective $A$-bimodule resolution of $A$ satisfying the Hypotheses~\ref{hypotheses}(a)--(c) below.  Let $F_K:K\ox_AK\to K$ be the chain map $F_K=\mu\ox \id_K-\id_K\ox \mu$.  Then for any degree $-1$ bimodule map $\phi:K\ox_AK\to K$ satisfying $d_K\phi+\phi d_{K\ox_A K} = F_K$, there is a bilinear operation $[\ ,\ ]_\phi$ on $\Hom_{A^e}(K,A)$.  Each operation $[\ ,\ ]_\phi$ satisfies the following properties. 
\begin{enumerate}
\item $[f,g]_\phi$ is a cocycle whenever $f$ and $g$ are cocycles in $\Hom_{A^e}(K,A)$. 
\item $[f,g]_\phi$ is a coboundary whenever, additionally, $f$ or $g$ is a coboundary. 
\item The induced operation on cohomology $\coh^{\bu}(\Hom_{A^e}(K,A))=\HH^{\bu}(A)$ is precisely the Gerstenhaber bracket:
$ \ 
[f,g]_\phi=[f,g]
$ on cohomology. 
\end{enumerate}
\end{theorem}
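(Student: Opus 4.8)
The plan is to build the operations $[\ ,\ ]_\phi$ directly on the complex $\Hom_{A^e}(K,A)$---which is the whole point, to avoid shuttling back and forth with explicit comparison maps---and to bring the bar resolution $B$ in only at the very end, to identify the induced operation on cohomology. I would use the structure carried by $K$ under Hypotheses (a)--(c); in particular I take these to equip $K$ with a coassociative comultiplication $\Delta_K\colon K\to K\ox_A K$ over $A$ having counit $\mu$, whose iterate I write $\Delta_K^{(2)}\colon K\to K\ox_A K\ox_A K$, together with the degree $-1$ map $\phi$ of the statement, $d_K\phi+\phi\,d=F_K$. Modeled on Gerstenhaber's circle product, I set for homogeneous $f,g\in\Hom_{A^e}(K,A)$ (signs suppressed)
\[
f\circ_\phi g \ :=\ f\circ\phi\circ(\id_K\ox g\ox\id_K)\circ\Delta_K^{(2)},\qquad
[f,g]_\phi \ :=\ f\circ_\phi g-(-1)^{(|f|-1)(|g|-1)}\,g\circ_\phi f :
\]
split $K$ into three tensor factors, evaluate $g$ on the middle one, reglue the outer two with $\phi$, then evaluate $f$. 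Since $g$ is supported in a single degree and $\phi$ raises degree by one, a bookkeeping of degrees shows $[f,g]_\phi$ lands in homological degree $|f|+|g|-1$, as it must.

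For parts (1) and (2) I would simply differentiate $f\circ_\phi g$. Applying the Leibniz rule and pushing $d$ past $\Delta_K^{(2)}$ (a chain map) and past $\phi$ (via $d_K\phi+\phi\,d=F_K$), the result breaks into terms in which $d$ has landed on $f$ or on $g$, and terms in which $F_K=\mu\ox\id_K-\id_K\ox\mu$ has replaced $\phi$. The former vanish when $f$ and $g$ are cocycles; the latter, because $\mu$ is the counit of $\Delta_K$, collapse into cup products $f\smile g$ and $g\smile f$ formed from $\Delta_K$, and these cancel against the corresponding terms coming from $g\circ_\phi f$ after the graded antisymmetrization built into the bracket, so $[f,g]_\phi$ is a cocycle; that is (1). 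If moreover $g=dh$, the same manipulation rewrites $[f,g]_\phi$ as an explicit coboundary plus cup-product terms that again cancel, giving (2). This is precisely the bookkeeping by which $\circ$ and $\smile$ assemble into the Gerstenhaber bracket on the bar complex, now carried out relative to $\Delta_K$ and $\phi$; the only delicate points are signs and checking that Hypotheses (a)--(c) really supply the compatibilities among $\Delta_K$, $\phi$, $\mu$ used along the way.

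For part (3) I would compare with the bar resolution $B\to A$, which satisfies the same hypotheses: there $\Delta_B$ is deconcatenation and there is a canonical homotopy $\phi_B$ for $F_B$ (essentially insertion of a unit). Two facts then close the argument. First, unwinding the definition above for $(K,\Delta_K,\phi)=(B,\Delta_B,\phi_B)$ reproduces Gerstenhaber's circle product, so that $[\ ,\ ]_{\phi_B}$ is Gerstenhaber's bracket already at the cochain level. Second, the operation induced on $\HH^{\bu}(A)$ is independent of the choices: if $\phi'$ is another admissible homotopy then $\phi-\phi'$ is a degree $-1$ cocycle in $\Hom_{A^e}(K\ox_A K,K)$, and one checks that $[f,g]_\phi-[f,g]_{\phi'}$ is a coboundary whenever $f,g$ are cocycles; moreover a chain map of resolutions $\iota\colon K\to B$, chosen compatibly with the comultiplications, pulls $\phi_B$ back to an admissible $\phi$ on $K$ for which the isomorphism $\iota^{\ast}$ on Hochschild cohomology carries $[\ ,\ ]_\phi$ to $[\ ,\ ]_{\phi_B}$. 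Combining the two facts yields $[f,g]_\phi=[f,g]$ on cohomology for every admissible $\phi$.

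The hard part will be (3), and within it the independence statement. Because the $[\ ,\ ]_\phi$ are only coarse operations---they are not chain maps, and $(\Hom_{A^e}(K,A),[\ ,\ ]_\phi)$ is not a dg Lie algebra---the usual principle that homotopic data induce the same operation on homology is unavailable off the shelf, so the coboundary estimates for $[f,g]_\phi-[f,g]_{\phi'}$ and the compatibility with $\iota$ must be verified by hand, tracking carefully that each correction term is a genuine coboundary precisely on cocycle inputs. A secondary, purely computational nuisance is pinning down the canonical $\phi_B$ and verifying that it reproduces Gerstenhaber's formula with all signs correct; the verifications behind (1) and (2) are long but routine once the right identities among $\Delta_K$, $\phi$ and $\mu$ have been isolated.
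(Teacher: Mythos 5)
This theorem is not proved in the present paper at all---it is quoted from \cite[Theorem 3.2.5]{Negron-Witherspoon}---and your proposal is a faithful reconstruction of that reference's argument: the same definition of $f\circ_\phi g$ via $\Delta_K^{(2)}$, $\id_K\ox g\ox \id_K$ and $\phi$, the same Leibniz-plus-homotopy computation in which the $F_K$-terms collapse via the counit into cup products that cancel in the graded commutator (giving (1) and (2)), and the same two-step argument for (3), namely independence of the bracket on cocycles from the choice of contracting homotopy together with transport of the canonical $\phi_B$ on the bar resolution through $\iota$ and the retract $\pi$ of Hypothesis (b). The only cosmetic slip is that the canonical $\phi_B$ is the identification $B_i\ox_A B_j\cong B_{i+j+1}$ (concatenation through the middle coefficient) rather than ``insertion of a unit,'' which does not affect the argument.
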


The bilinear map $[ \ , \ ]_{\phi}$ is defined by equations 
(\ref{eqn:phi-circle-def}) and (\ref{eqn:phi-bracket-def}) below. 
By \cite[Lemma 3.2.1]{Negron-Witherspoon}, such maps $\phi$ satisfying the conditions in the theorem  always exist.  
We call such a map $\phi$ a {\it contracting homotopy} for $F_K$.  

There is a {\em diagonal map} 
$\Delta_B: B\rightarrow B\ot _A B$ given by 
$$
\Delta_B(a_0\ox\dots\ox a_{n+1})=\sum^n_{i=0}(a_0\ox\dots\ox a_i\ox 1)\ox_A(1\ox a_{i+1}\ox\dots\ox a_{n+1})
$$
for all $a_0,\ldots,a_{n+1}\in A$. 
The  hypotheses on the projective $A$-bimodule resolution $K\rightarrow A$,
to which the theorem above  refers, are as follows.

\begin{hypotheses}\label{hypotheses}
\begin{enumerate}
\item[(a)] $K$ admits a chain embedding $\iota:K\to B$  
that fits into a commuting diagram
$$
\xymatrixrowsep{4mm}
\xym{
K\ar[rr]^\iota\ar[dr] & & B\ar[dl]\\
 & A &
}
$$
\item[(b)] The embedding $\iota$ admits a retract $\pi$.  That is, 
there is an chain map $\pi : B\rightarrow K$ such that $\pi \iota=\id_K$.
\item[(c)] The diagonal map $\Delta_B:B\to B\ox_A B$ preserves $K$, and hence restricts to a diagonal map  $\Delta_K:K\to K\ox_A K$.  Equivalently, $\Delta_B \iota=(\iota\ox_A \iota)\Delta_K$.
\end{enumerate}
\end{hypotheses}

Conditions (a) and (c) together can alternatively be stated as the condition that $K$ is a dg coalgebra in the monoidal category $A$-bimod that admits an embedding into the bar resolution.  We denote the coproducts of  elements in $K$ using Sweedler's notation, e.g.\ 
$$
\Delta(w)=w_1\ox w_2,\ \ \ (\Delta\ox_A \id_K)\Delta(w)=w_1\ox w_2\ox w_3,
$$
for $w\in K$,
with the implicit sum $\Delta(w)=\sum_{i_1, i_2} w_{i_1}\ox w_{i_2}$ suppressed.  Koszul resolutions of Koszul algebras, as well as related algebras such as universal enveloping algebras, Weyl algebras, and Clifford algebras, will fit into this framework \cite{BS,N}.
\par

Given a  resolution $K\to A$ satisfying  Hypotheses~\ref{hypotheses}(a)--(c)
and a contracting homotopy $\phi$ for $F_K$, we construct the $\phi$-bracket as follows: We first define the {\em $\phi$-circle product} for functions $f,g$ in
$\Hom_{A^e}(K,A)$ by
\begin{equation}\label{eqn:phi-circle-def}
(f\circ_\phi g)  (w)=(-1)^{|w_1||g|}f\big(\phi(w_1\ox g(w_2)\ox w_3)\big)
\end{equation}
for homogeneous $w$ in $K$, 
and define the {\em $\phi$-bracket} as the graded commutator
\begin{equation}\label{eqn:phi-bracket-def}
[f,g]_\phi=f\circ_\phi g-(-1)^{(|f|-1)(|g|-1)}g\circ_\phi f.
\end{equation} 
\par

We will be interested in producing such brackets particularly for a 
skew group algebra formed from a symmetric algebra (i.e.\ polynomial ring) 
under a finite group action.
We will start with the Koszul resolution $K$ 
of the symmetric algebra itself and then 
construct a natural extension $\widetilde{K}$ to resolve the skew group algebra.  
We will define a contracting homotopy $\phi$ for $F_K$ that extends  to $\widetilde{K}$, 
and use it to compute Gerstenhaber brackets via 
Theorem~\ref{thm:NW}. 
We describe these constructions next in the context of more
general skew group algebras. 


\subsection{Skew group algebras}
Let $G$ be a finite group whose order is not divisible by the 
characteristic of the field $k$. Assume that  $G$ acts by automorphisms on the algebra $A$.
Let $B=B(A)$ be the bar resolution of $A$, and 
let $K=K(A)$ be a projective $A$-bimodule resolution of $A$ satisfying Hypotheses~\ref{hypotheses}(a)--(c).
Assume that $G$ acts on $K$ and on $B$,
and this action commutes with the differentials and with
the maps $\iota, \pi, \Delta_K, \Delta_B$. 
These assumptions all hold in the case that $A$ is a Koszul algebra 
on which $G$ acts by graded automorphisms 
and $K$ is a Koszul resolution; in particular, $\pi$ may be replaced by 
$\frac{1}{|G|} \sum_{g\in G} g \pi g^{-1}$ if it    is not a priori $G$-linear. 

Let $A\# G$ denote the {\em skew group algebra},
that is $A\ot kG$ as a vector space, with multiplication  defined by
$(a\ot g)(b\ot h) = a ( {}^gb)\ot gh$ for all $a,b\in A$ and $g,h\in G$,
where left superscript denotes the $G$-action. 
We will sometimes write $a\# g$ or simply $ag$ in place of 
the element $a\ot g$ of $A\# G$ when
there can be no confusion. 

Let $B(A\# G)$ denote the bar resolution of $A\# G$ as a $k$-algebra, and let 
$\widetilde{B}(A\# G)$ denote its bar resolution {\em over} $kG$: 
$$
   \cdots \stackrel{\delta_3}{\longrightarrow}
  (A\# G)^{\ot_{kG} 4} \stackrel{\delta_2}{\longrightarrow}
  (A\# G)^{\ot_{kG} 3} \stackrel{\delta_1}{\longrightarrow}
  (A\# G)\ot_{kG} (A\# G) \stackrel{\mu}{\longrightarrow}
  A\# G \rightarrow 0 ,
$$
with differentials defined as in (\ref{eqn:bar-diff}).
Since $kG$ is semisimple, this is a projective resolution of $A\# G$
as an $(A\# G)^e$-module.
There is a vector space isomorphism 
$$
   (A \# G)^{\ot_{kG} i}\cong A^{\ot i} \ot kG
$$
for each $i$, and from now on we will identify $\widetilde{B}_j(A\# G)$ with 
$A^{\ot (j+2)} \ot kG$ for each $j$, and the differentials of $\widetilde{B}$ 
with those of $B$ tensored with the identity map on $kG$.

Similarly we wish to extend $K$ to a projective resolution of $A\# G$ as an $(A\# G)^e$-module.
Let $\widetilde{K}= \widetilde{K}(A\# G)$ denote the following complex: 
$$  \cdots \stackrel{d_3\ot\id_{kG}}{\relbar\joinrel\relbar\joinrel\relbar\joinrel\relbar\joinrel\longrightarrow}
  K_2\ot kG \stackrel{d_2\ot\id_{kG}}{\relbar\joinrel\relbar\joinrel\relbar\joinrel\relbar\joinrel\longrightarrow}
  K_1\ot kG \stackrel{d_1\ot\id_{kG}}{\relbar\joinrel\relbar\joinrel\relbar\joinrel\relbar\joinrel\longrightarrow}
  K_0\ot kG \stackrel{\mu\ot\id_{kG}}{\relbar\joinrel\relbar\joinrel\relbar\joinrel\longrightarrow}
  A\# G \rightarrow 0 .
$$
We give the terms of this complex the structure of $A\# G$-bimodules as follows:
\begin{eqnarray*}
   (a\# g) (x\ot h) & = &  a( {}^gx)\ot gh , \\
   (x\ot h) (a\# g) &=& x ({}^ha)\ot hg , 
\end{eqnarray*}
for all $a\in A$, $g,h\in G$, and $x\in K_i$. 
Then $\widetilde{K}$ is a projective resolution of $A\# G$ by 
$(A\# G)^e$-modules. 

Next we will show that $\widetilde{K}\rightarrow A\# G$ satisfies 
Hypotheses~\ref{hypotheses}(a)--(c) for the algebra $A\# G$.
Let $\tilde{\iota} : \widetilde{K} \rightarrow B(A\# G)$ be the composition
$$
   \widetilde{K} \stackrel{ \iota\ot \id_{kG}}
  {\relbar\joinrel\relbar\joinrel\relbar\joinrel\longrightarrow}
  \widetilde{B}(A\# G) \stackrel{i}{\longrightarrow} B(A\# G)
$$
where $i(a_0\ot \cdots\ot a_{j+1}\ot g) = (a_0\# 1)\ot \cdots\ot (a_j\# 1)\ot (a_{j+1}\# g)$
for all $a_0,\ldots, a_{j+1} \in A$ and $g\in G$. 
Let $\tilde{\pi} : B(A\# G)\rightarrow \widetilde{K}$ be the composition
$$
  B(A\# G) \stackrel{p}{\longrightarrow} \widetilde{B}(A\# G) \stackrel
   {\pi\ot \id_{kG} }{\relbar\joinrel\relbar\joinrel\relbar\joinrel\longrightarrow}
   \widetilde{K} 
$$
where 
\begin{equation}\label{eqn:move-g-right}
  p((a_0\# g_0)\ot (a_1\# g_1)\ot (a_2\# g_2) \ot\cdots\ot (a_j\# g_j))
   \hspace{3cm}
\end{equation}
\[
 \hspace{1cm} = a_0\ot ( {}^{g_0}a_1) \ot ( {}^{g_0g_1}a_2) \ot\cdots\ot ({}^{g_0g_1\cdots g_{j-1}} a_j)
\ot g_0g_1\cdots g_j
\]
for all $a_0,\ldots,a_j\in A$ and $g_0,\ldots, g_j\in G$. 
One can check that $i$ and $p$ are indeed chain maps.
Then
$$
  \tilde{\pi}\tilde{\iota} = (\pi\ot \id_{kG}) p  i (\iota\ot \id_{kG}) =
    \id _{\widetilde{K}}
$$
since $pi = \id_{\widetilde{B}(A\# G)}$ by the definitions of these maps, and 
$\pi\iota = \id_{K}$ by Hypothesis~\ref{hypotheses}(b)  applied to $K$. 
Therefore Hypotheses~\ref{hypotheses}(a) and (b) hold for
$\widetilde{K}$.

Now let $\Delta_{\widetilde{K}} : \widetilde{K} \rightarrow
\widetilde{K}\ot _{A\# G} \widetilde{K}$ be defined by $\Delta_{\widetilde{K}}=
\Delta_{K}\ot \id_{kG}$, after identifying
 $
   \widetilde{K}_i\ot_{A\# G} \widetilde{K}_j =
  (K_i\ot kG) \ot_{A\# G} (K_j\ot kG) $ 
with
$ (K_i\ot _AK_j)\ot kG$. 
One may check that $\Delta_{\widetilde{K}}$ satisfies Hypothesis~\ref{hypotheses}(c). 

Let $\phi_K: K\ot_A K\rightarrow K$ be a map satisfying $d_K \phi_K + \phi_K d_{K\ot_AK} = F_K$ as in  Theorem~\ref{thm:NW}.
Assume that $\phi_K$ is  $G$-linear.
Let 
\begin{equation}\label{eqn:phiKtilde}
   \phi_{\K}= \phi_{K}\ot\id_{kG},
\end{equation} 
 a map from $\widetilde{K}
\ot _{A\# G}\K$ to $\K$,  under the identification
$\K  \ot_{A\# G} \K \cong (K\ot _A K)\ot kG$. 
Since $\phi_K$ is $G$-linear, this map $\phi_{\widetilde{K}}$ is an $A\# G$-bimodule map.
Further,  
$$
   d_{\widetilde{K}}\phi_{\K} + \phi_{\K}d_{\K\ot_A \K} 
= (d_K\phi_{K} + \phi_K d_{K\ot_A K} )\ot \id_{kG} = F_{K} \ot \id_{kG} = F_{\K}.
$$
As a consequence, 
by Theorem~\ref{thm:NW},  $\phi_{\widetilde{K}}$  may be used to define the
Gerstenhaber bracket on the Hochschild cohomology of $A\# G$ via (\ref{eqn:phi-circle-def}) and (\ref{eqn:phi-bracket-def}).

\section{Symmetric algebras}
\label{polyrings}

Let $V$ be a finite dimensional vector space over the field $k$
of characteristic 0, 
and let $A=S(V)$, the symmetric algebra on $V$. 
In this section, we construct a map $\phi$ that will allow us to 
compute Gerstenhaber brackets on the Hochschild cohomology of  
the skew group algebra $A\# G$ 
arising from a representation of the finite group $G$ on $V$,
via Theorem~\ref{thm:NW} and equation~(\ref{eqn:phiKtilde}).  

\subsection{The Koszul resolution}
We will use a standard
description of the Koszul resolution $K$ of $A=S(V)$ as an $A^e$-bimodule, given as a
subcomplex of the bar resolution $B=B(A)$: For all $v_1,\ldots,v_i\in V$, let 
$$
   o(v_1,\ldots, v_i)=\sum_{\sigma\in S_i} \sgn (\sigma) v_{\sigma(1)}\ot\cdots
    \ot v_{\sigma(i)} 
$$
in $V^{\ot i} \subset A^{\ot i}$. 
Sometimes we write instead
\[
   o(v_I) = o(v_1,\ldots, v_i),
\]
where $I=\{1,\ldots,i\}$. 
Take $o(\varnothing) =1$.
Let $K_i=K_i(A)$ be the free $A^e$-submodule of $A^{\ot (i+2)}$
with free basis all $1\ot o(v_1,\ldots,v_i)\ot 1$ in $A^{\ot (i+2)}$. 
We may in this way identify $K_i$ with $A\ot\Wedge^iV\ot A$ and 
$K$ with $A\ot \Wedge^{\bu} V\ot A$.
The differentials on the bar resolution, restricted to $K$, may be rewritten
in terms of the chosen free basis of $K$ as
\[
\begin{aligned}
&  d(1\ot o(v_1,\ldots,v_i)\ot 1) \\ &\hspace{.5cm} =
    \sum_{j=1}^i (-1)^{j-1}(v_j\ot o(v_1,\ldots, \hat{v}_j,\ldots, v_i)\ot 1
   - 1\ot o(v_1,\ldots, \hat{v}_j, \ldots, v_i) \ot v_j) ,
\end{aligned}
\]
where $\hat{v}_j$ indicates that $v_j$ has been deleted from the list of vectors.
Note that the action of $G$ preserves the vector subspace of $K_i$ spanned
by all $1\ot o(v_1,\ldots,v_i)\ot 1$. 
There is a retract $\pi: B\rightarrow K$ that can be chosen to be $G$-linear.
We will not need an explicit formula for $\pi$ here.

The diagonal map $\Delta_{K} : K\to K\ox_A K$ is given by 
\begin{equation}
\Delta_K (1\ox o(v_I)\ox 1)=\sum_{I_1,I_2} \pm (1\ox o(v_{I_1})\ox 1)\ox(1\ox o(v_{I_2})\ox 1)
\label{cmltp}
\end{equation}
where the sum is indexed by all ordered disjoint subsets $I_1,I_2\subset I$
with $I_1\cup I_2=I$, and $\pm$ is the sign of $\sigma$, 
the unique permutation for which $\{i_{\sigma(1)},\dots
i_{\sigma(|I|)}\}=I_1\cup I_2$ as an ordered set.
Note that $\Delta_K$ is $G$-linear. 
Letting $\iota: K\rightarrow B$ be the embedding of $K$ as a subcomplex of $B$,
Hypotheses~\ref{hypotheses}(a)--(c) now hold.
We may thus use Theorem~\ref{thm:NW} and (\ref{eqn:phiKtilde}) 
to compute brackets on $\HH^{\bu}(S(V)\# G)$ 
once we have a suitable map $\phi$. 

\subsection{An invariant map $\phi$}
We will now define an $A$-bilinear map $\phi: K\ox_A K \to K$ that will be 
$G$-linear and independent of choice of ordered basis of $V$. Compare with 
\cite[Definition~4.1.3]{Negron-Witherspoon}, where a simpler map $\phi$ was defined
which however depends on such a choice. 
At first glance, the map $\phi$ below looks rather complicated, 
but in practice we find it easier to use when extended to a skew group algebra
than  explicit chain maps between bar and Koszul resolutions.

\begin{definition}\label{def:phi}
{\sl Assume the characteristic of $k$ is 0. Let
$\phi:K\ox_A K\to K$ be the $A$-bilinear map given on ordered
monomials as follows: 
\\
\[
   \phi_0(1\ot v_1\cdots v_t\ot 1) =
   \frac{1}{t!} \sum_{\substack{1\leq r\leq t \\ \sigma\in S_t}}
    v_{\sigma(1)}\cdots v_{\sigma(r-1)} \ot o(v_{\sigma(r)})\ot
   v_{\sigma(r+1)}\cdots v_{\sigma(t)} 
\]
for all $v_1,\ldots, v_t\in V$. 
On $K_0\ot_A K_z$ and on $K_s\ot _A K_0$ ($s,z >0$), let 
\[ 
\begin{aligned}
&\phi_z(1\ot v_1\cdots v_t\ot o(w_1,\ldots, w_z)\ot 1) \\  &=
   \frac{(-1)^{z}}{(t+z)!} \sum_{\substack{1\leq r\leq t\\ \sigma\in S_t}}
    \big(\prod_{i=0}^{z-1} (r+i)\big)
    v_{\sigma(1)}\cdots v_{\sigma(r-1)}
    \ot o(w_1,\ldots, w_z,v_{\sigma(r)}) \ot
    v_{\sigma(r+1)}\cdots v_{\sigma(t)} ,\\
&\phi_s(1\ot o(u_1,\ldots, u_s) \ot v_1\cdots v_t \ot 1 \\ &=
   \frac{1}{(t+s)!} \sum_{\substack{1\leq r\leq t\\ \sigma\in S_t}}
    \big(\prod_{i=1}^s (t-r+i)\big) 
   v_{\sigma(1)}\cdots v_{\sigma(r-2)}
   \ot o(v_{\sigma(r)}, u_1, \ldots, u_s)\ot v_{\sigma(r+1)}\cdots
    v_{\sigma(t)} , 
\end{aligned} 
\] 
for all $u_1,\ldots, u_s,v_1,\ldots, v_t, w_1,\ldots, w_z\in V$.
When $s>0$ and $z>0$, let
\[
\begin{aligned} 
&\phi_{s+z}(1\ot o(u_1,\ldots, u_s)\ot v_1\cdots v_t \ot o(w_1,\ldots, w_z)\ot 1) \\
&= 
\sum_{\substack{1\leq r\leq t\\ \sigma\in S_t}}
 c^{s,t,z}_r v_{\sigma(1)}\cdots v_{\sigma(r-1)}\ot o(w_1,\ldots,w_z,v_{\sigma(r)},
   u_1,\ldots, u_s)\ot v_{\sigma(r+1)}\cdots v_{\sigma(t)},
\end{aligned}
\]
where $ \ \displaystyle{ c^{s,t,z}_r = \frac{(-1)^{sz+z}}{(s+t+z)!} 
       \big(\prod_{i=0}^{z-1} (r+i)\big) \big(\prod_{j=1}^s (t-r+j)\big)}$. 
}\end{definition}

To see that $\phi$ is well-defined, one can first construct the corresponding map from the tensor powers $(V^{\ot s})\ot (V^{\ot t})\ot (V^{\ot z})$ (using the universal property of the tensor product, for example) then note that the given map is $S_s\times S_t\times S_z$-invariant and hence induces a well-defined map $\phi$ on the coinvariants $(\Wedge^s V)\ot S^t(V)\ot (\Wedge^z V)$.  One can also see directly that $\phi$ is $G$-invariant.  In fact, it is invariant under the action of the entire group $\mathrm{GL}(V)$.  We next state that $\phi$ is a contracting homotopy for the map $F_K$ defined in the statement of Theorem~\ref{thm:NW}.

\begin{lemma}\label{lemma:phimap}
Let $\phi: K\ot_A K\rightarrow K$ be the $A$-bilinear map of 
Definition~\ref{def:phi}. Then 
$ \ d_K\phi + \phi d_{K\ot_AK} = F_K$.
\end{lemma}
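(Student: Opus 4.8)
The strategy is to verify the homotopy identity $d_K\phi+\phi d_{K\otimes_A K}=F_K$ degreewise on the free $A^e$-basis of $K\otimes_A K$. Since $\phi$ is $A$-bilinear and $\mathrm{GL}(V)$-invariant, and since both sides of the identity are $A$-bilinear, it suffices to check it on generators of the form $1\otimes o(u_1,\ldots,u_s)\otimes v_1\cdots v_t\otimes o(w_1,\ldots,w_z)\otimes 1$ living in $K_s\otimes_A K_z$, with the middle factor $v_1\cdots v_t\in S^t(V)$ recording the ``overlap'' coming from the balanced tensor product over $A$. The four cases in Definition~\ref{def:phi} ($s=z=0$; $s=0<z$; $z=0<s$; $s,z>0$) will be handled in turn, with the general $s,z>0$ case being the one that subsumes the others.

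First I would record explicit formulas for the three maps entering the identity on such a generator: the Koszul differential $d_K$ applied to $o(u_I)$ and to $o(w_J)$ (each contributing an alternating sum that either moves a $u_i$ or $w_j$ into the outer $A$-factor, thereby increasing $t$ by one and shrinking the wedge), the internal differential $d_{K\otimes_A K}$ (which is $d_K\otimes\id \pm \id\otimes d_K$ together with the bimodule structure that feeds boundary terms across the tensor symbol into the middle symmetric factor), and $F_K=\mu\otimes\id_K-\id_K\otimes\mu$ (which simply reads off the two ``endpoint'' contributions, collapsing one Koszul factor to $A$). Then I would substitute Definition~\ref{def:phi} into $d_K\phi$ and into $\phi d_{K\otimes_A K}$, expand everything as sums over $r$ and $\sigma\in S_t$ (or $S_{t\pm1}$ after a differential is applied), and collect terms.

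The computation then becomes a bookkeeping problem: after expansion, each side is a sum of terms indexed by a choice of which vector sits in the singleton slot $o(\cdots, v_{\sigma(r)},\cdots)$ of the wedge and which vectors are pushed out to the left/right $S(V)$-factors. One groups terms by their ``shape'' (i.e.\ by the resulting basis element of $K$) and checks that the scalar coefficients — built from the products $\prod_{i=0}^{z-1}(r+i)$ and $\prod_{j=1}^s(t-r+j)$ and the factorial normalizations $c^{s,t,z}_r$ — satisfy the required linear recursions. The key combinatorial identities are telescoping-type relations such as $(t+1)\,c^{s,t,z}_r$ versus $r\cdot c^{s,t+1,z}$-type expressions and the analogous ones on the $w$-side, together with the standard identity $\sum_{r}(\text{something})=(t+z+s)!/\cdots$ that makes the $F_K$ terms (which have no internal sum and coefficient essentially $1$) match the surviving pieces. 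Signs must be tracked carefully: the $(-1)^{sz+z}$ in $c^{s,t,z}_r$, the signs in $\Delta_K$ and in the Koszul differential, and the Koszul sign $(-1)^{|K_s|}$ in $d_{K\otimes_A K}=d_K\otimes\id+(-1)^s\id\otimes d_K$ all enter.

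\textbf{Main obstacle.} The conceptual content is low but the combinatorial/sign bookkeeping is heavy: the hard part is organizing the expansion so that the terms where a differentiated vector lands \emph{in} the singleton wedge slot cancel between $d_K\phi$ and $\phi d_{K\otimes_A K}$, while the terms where it lands in an outer $S(V)$-factor reassemble into $F_K$ — and doing this uniformly in $s,t,z$ rather than re-deriving each of the four cases. I would expect to reduce the whole verification to a single pair of scalar identities among the coefficients $c^{s,t,z}_r$ (one capturing the ``$w$-differential'' cancellation, one the ``$u$-differential'' cancellation) plus a summation identity producing the $F_K$ boundary terms, and then these identities follow from elementary manipulation of the products and factorials. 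A useful sanity check along the way is the already-verified well-definedness (invariance under $S_s\times S_t\times S_z$) and the degree-$0$ case $\phi_0$, which should drop out of the general formula by setting $s=z=0$.
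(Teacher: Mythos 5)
Your plan coincides with the paper's own proof: the paper checks the degree-$0$ case directly and then, in the appendix, verifies the identity separately on $K_{>0}\otimes_A K_0$, $K_0\otimes_A K_{>0}$, and $K_{>0}\otimes_A K_{>0}$ by expanding $d\phi$ and $\phi d$, grouping terms by shape, and reducing everything to a short list of scalar recursions among the coefficients $\xi_r^{s,t,z}$ (the identities lEQ 1--5, rEQ 1--5, lrEQ 1--7), each checked by elementary factorial manipulation exactly as you anticipate. Your proposal is a correct outline of this same computation; what remains is the (lengthy but routine) bookkeeping that the appendix carries out.
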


\begin{proof}
In degree 0, we check:
\[
\begin{aligned}
  & (d\phi+\phi d) (1\ot v_1\cdots v_t \ot 1) \\
  &= d\left(\frac{1}{t!} \sum_{\substack{1\leq r\leq t\\ \sigma\in S_t}}
    v_{\sigma(1)}\cdots v_{\sigma(r-1)} \ot o(v_{\sigma(r)})\ot v_{\sigma(r+1)}
   \cdots v_{\sigma(t)}\right)\\
  &= \frac{1}{t!} \sum_{\substack{1\leq r\leq t\\ \sigma\in S_t}}
  (v_{\sigma(1)}\cdots v_{\sigma(r)} \ot v_{\sigma(r+1)}
   \cdots v_{\sigma(t)}  -  v_{\sigma(1)}\cdots v_{\sigma(r-1)} \ot v_{\sigma(r)}
   \cdots v_{\sigma(t)}) \\
   & =\frac{1}{t!}\sum_{\sigma\in S_t} ( v_{\sigma(1)}\cdots v_{\sigma(t)}\ot 1
   - 1\ot v_{\sigma(1)}\cdots v_{\sigma(t)} )\\
  & = v_1\cdots v_t \ot 1 - 1\ot v_1\cdots v_t \ \ = \ \ F_K(1\ot v_1
   \cdots v_t \ot 1) . 
\end{aligned}
\] 
Other verifications are tedious, but similar.
Details are given in the appendix. 
\end{proof}


\section{$\phi$-circle product formula and projections onto group components}\label{sec:circ-proj}

We first recall some basic facts about the  Hochschild cohomology of 
the skew group algebra $S(V)\# G$.
The graded vector space structure of the cohomology is well-known,
see for example \cite{farinati,ginzburgkaledin04,NPPT}.
We give some details here as will be needed for our bracket computations. 
We then  derive a formula for  $\phi$-circle products 
(defined in (\ref{eqn:phi-circle-def})) on this Hochschild cohomology, and define
projection operators needed for our main results.
We assume from now on that the characteristic of $k$ is 0.
Then 
\begin{equation}\label{eqn:G-invts}
  \HH^{\bu}(S(V)\# G)\cong \HH^{\bu}(S(V),S(V)\# G)^G,
\end{equation}
where the superscript $G$ denotes invariants of the action of $G$ on Hochschild
cohomology induced by its action on complexes (via the standard 
group action on tensor products and functions). 
This follows for example from 
\cite{gerst86}. 
We will focus our discussions and computations on $\HH^{\bu}(S(V),S(V)\# G)$,
and results for Hochschild cohomology $\HH^{\bu}(S(V)\# G)$ will follow by
restricting to its $G$-invariant subalgebra.

\subsection{Structure of the cohomology $\HH^{\bu}(S(V),S(V)\# G)$}
\label{cohomcomp}

Let $\{x_1,\dots, x_n\}$ be a basis for $V$ and  $\{x_1^*,\ldots,x_n^*\}$
the dual basis for its dual space $V^*$.  
The Hochschild cohomology $\HH^{\bu}(S(V),S(V)\# G)$ is computed as the homology of the complex
\begin{eqnarray*}
\Hom_{S(V)^e}(S(V)\ot \Wedge^{\bu}V\ot S(V) , S(V)\# G) & \cong & 
\Hom_k(\Wedge^{\bu}V, S(V)\# G)  \\ 
  & \cong &  
\bigoplus_{g\in G} S(V)\ox{\Wedge}^{\bu} V^\ast g.
\end{eqnarray*}
\par

The differential on the graded space $\bigoplus_{g\in G} S(V)\ox{\Wedge}^{\bu} V^\ast g$ induced by the above sequence of isomorphisms is left multiplication by the diagonal matrix
$$
E=\mathrm{diag}\{E_g\}_{g\in G}, \ \ \ E_g=\sum_{i}(x_i-{^gx_i})\partial_i,
$$
where $\partial_i = 1\ot x_i^*$.  So $E\cdot(\sum_g Y_gg)=\sum_g (E_g Y_g)g$.  This complex breaks up into a sum of subcomplexes $(S(V)\ox{\Wedge}^{\bu} V^\ast g,E_g)$, and we have
$$
\HH^{\bu}(S(V),S(V)\# G)=
\bigoplus_{g\in G} \coh^{\bu}(S(V)\ox{\Wedge}^{\bu} V^\ast g).
$$
We note that each $E_g$ is independent of the choice of basis, since it is simply the image of $1g \in \Hom_k(\Wedge^0V,S(V)g)$ under the differential on $\Hom_k(\Wedge^\bu V, S(V)g)$.
\par

The right $G$-action on  $\Hom_{S(V)^e}(K,S(V)\# G)$ is given by $(f\cdot g)(x)=g^{-1}f({^gx})g$, for $f\in\Hom_{S(V)^e}(K,S(V)\# G)$ and $g\in G$, giving it the structure of a $G$-complex.  This  translates to the action
\begin{equation}\label{eq.actn}
\big((a\ox f_1\dots f_l)g)\cdot h=\big(({^{h^{-1}}a})\ox f_1^h\dots f_l^h\big)h^{-1}gh
\end{equation}
on $\bigoplus_{g\in G} S(V)\ox{\Wedge}^{\bu} V^\ast g$, where $a\in S(V)$ and $f_i\in V^\ast$.  
It will be helpful to have the following general lemma.

\begin{lemma}\label{lem723}
Given any $G$-representation $M$, and element $g\in G$, there is a canonical complement to the $g$-invariant subspace $M^g$, given by $(M^g)^\perp=(1-g)\cdot M$.  This gives a splitting $M=M^g\oplus (1-g)M$ of $M$ as a $\langle g\rangle$-representation.  This decomposition satisfies
$$
M^g=M^{g^{-1}}\ \ \text{and}\ \ (1-g)M=(1-g^{-1})M
$$
and is compatible with the $G$-action in the sense that, for any $h\in G$,
$$
h\cdot M^g=M^{hgh^{-1}}\ \ \text{and}\ \ h\cdot\big((1-g)M\big)=(1-{hgh^{-1}})M.
$$
\end{lemma}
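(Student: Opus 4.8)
The plan is to establish the four assertions in sequence, since each is elementary once the right decomposition is in hand; the only mild subtlety is checking that $(1-g)M$ really is a complement to $M^g$ and that this complement is canonical, which hinges on the averaging operator being available in characteristic $0$ (or more precisely, whenever $|\langle g\rangle|$ is invertible in $k$, which holds here since $|G|$ is not divisible by $\mathrm{char}\,k$).

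First I would introduce the idempotent $e_g = \frac{1}{|\langle g\rangle|}\sum_{i=0}^{|\langle g\rangle|-1} g^i \in kG$. A direct computation shows $e_g^2 = e_g$ and $g\cdot e_g = e_g$, so $e_g M \subseteq M^g$; conversely if $m \in M^g$ then $e_g m = m$, whence $e_g M = M^g$. Then $(1-e_g)$ is the complementary idempotent, so $M = e_g M \oplus (1-e_g)M = M^g \oplus (1-e_g)M$. It remains to identify $(1-e_g)M$ with $(1-g)M$: clearly $(1-g)M \subseteq (1-e_g)M$ because $(1-e_g)(1-g) = 1-g-e_g+e_g = 1-g$ (using $e_g g = e_g$); for the reverse inclusion, note $(1-e_g) = (1-g)\cdot q$ for the element $q = \frac{1}{|\langle g\rangle|}\sum_{i=0}^{|\langle g\rangle|-1}(|\langle g\rangle|-i)\,g^i$ (a standard ``telescoping'' identity: $(1-g)\sum_{i} c_i g^i$ collapses appropriately), so $(1-e_g)M \subseteq (1-g)M$. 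This proves the splitting and that $(1-g)M = (1-e_g)M$ is canonical (independent of any choice of basis or generator), since $e_g$ is canonically attached to $g$.

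Next, the identities $M^g = M^{g^{-1}}$ and $(1-g)M = (1-g^{-1})M$: the first is immediate because a vector fixed by $g$ is fixed by $g^{-1}$ and vice versa (equivalently, $\langle g\rangle = \langle g^{-1}\rangle$, so $e_g = e_{g^{-1}}$). The second then follows from the first together with the splitting, since $(1-g)M$ is the unique $\langle g\rangle$-stable complement $(1-e_g)M$ and $e_g = e_{g^{-1}}$ forces $(1-e_g)M = (1-e_{g^{-1}})M$. For compatibility with the $G$-action, fix $h\in G$. If $m\in M^g$ then $(hgh^{-1})\cdot(h\cdot m) = h\cdot(g\cdot m) = h\cdot m$, so $h\cdot M^g \subseteq M^{hgh^{-1}}$; applying the same with $h^{-1}$ and $hgh^{-1}$ in place of $h$ and $g$ gives the reverse inclusion, hence $h\cdot M^g = M^{hgh^{-1}}$. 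Equivalently, $h e_g h^{-1} = e_{hgh^{-1}}$ directly from the definition of $e_g$. Finally $h\cdot\big((1-g)M\big) = h(1-g)h^{-1}\cdot(h\cdot M) = (1 - hgh^{-1})\cdot M = (1-hgh^{-1})M$, using that $h$ acts invertibly on $M$.

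I do not expect a genuine obstacle here; the one point to be careful about is to phrase everything so that $(1-g)M$ is manifestly \emph{canonical} — i.e.\ to exhibit it as the image of the canonical idempotent $1-e_g$ rather than merely as ``a'' complement — so that later, when this lemma is invoked to define the projections $p_g$, there is no ambiguity. If one prefers to avoid idempotents in $kG$ altogether, the same argument runs verbatim with $e_g$ replaced by the averaging map $m \mapsto \frac{1}{|\langle g\rangle|}\sum_i g^i m$ on $M$, which is exactly the projector onto $M^g$ along $(1-g)M$.
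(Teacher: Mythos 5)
Your proof is correct, and it takes a somewhat different route from the paper's. The paper argues via the map $(1-g)\cdot-:M\to M$: its kernel is exactly $M^g$, the intersection $M^g\cap(1-g)M$ is killed by the averaging operator $\int_g$, and the direct sum decomposition then follows from a dimension count (rank--nullity) for finite-dimensional $M$, with the infinite-dimensional case handled by exhausting $M$ by finite-dimensional submodules. You instead realize the two summands as the images of the complementary idempotents $e_g$ and $1-e_g$ in $kG$, and identify $(1-e_g)M$ with $(1-g)M$ through the factorization $1-e_g=(1-g)q$ with $q=\frac{1}{n}\sum_{i=0}^{n-1}(n-i)g^i$ (which does check out: the telescoping computation gives $(1-g)q=\frac{1}{n}\bigl(n\cdot 1-\sum_{i=0}^{n-1}g^i\bigr)=1-e_g$, using $g^n=1$). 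This buys you a uniform argument with no dimension count and no separate treatment of infinite-dimensional $M$, and it makes the canonicity of the complement completely explicit, which is the feature the paper later relies on when defining the projections $p_g$. You also derive $(1-g)M=(1-g^{-1})M$ from $e_g=e_{g^{-1}}$ rather than from the paper's observation that $M=-gM$ and $-g(1-g^{-1})=1-g$; both are fine. The conjugation compatibilities are handled identically in both arguments. The only cosmetic suggestion is to write out the one-line verification of the factorization $1-e_g=(1-g)q$ rather than calling it standard, since it is the step doing the real work in your version.
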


\begin{proof}
The operation $(1-g)\cdot-:M\to M$ has kernel precisely $M^g$.  Furthermore $M^g\cap (1-g)\cdot M=0$, since for any invariant element $m-gm$ in $(1-g)\cdot M$ we will have
$$
(m-gm)=\int_g(m-gm)=\int_gm-\int_g gm=\int_g m-\int_g m=0,
$$
where $\int_g=\frac{1}{|g|}\sum_{i=0}^{|g|-1}g^i$.  We conclude that $M=M^g\oplus (1-g)M$
when $M$ is finite dimensional, by a dimension count.  When $M$ is infinite dimensional, the result can be deduced from the fact that $M$ is the union of its finite dimensional submodules.  
\par

The equality $M^g=M^{g^{-1}}$ is clear, and the equality $(1-g)M=(1-g^{-1})M$ follows from the fact that $M=-gM$.  As for the compatibility claim, the identity $h\cdot M^g=M^{hgh^{-1}}$ is obvious, while the equality $h(1-g)M=(1-hgh^{-1})M$ follows from the computation
$$
h(1-g)M=h(1-g)h^{-1}M=(hh^{-1}-hgh^{-1})M=(1-hgh^{-1})M.
$$
\end{proof}

Let us take ${\det}_g^\perp$ to be the one dimensional $\langle g\rangle$-representation:
\[
{\rm {det}}_g^{\perp}=\Wedge^{\codim{V^g}}((1-g)V)^\ast g .
\]
We then have the embedding
\begin{equation}\label{bleh}
S(V^g)\ox{\Wedge}^{{\bu}-\codim V^g} (V^g)^\ast{\det}_g^\perp 
    \ \longrightarrow \  S(V)\ox{\Wedge}
^{\bu} V^\ast g
\end{equation}
induced by the embedding of $V^g$ as a subspace of $V$ and a corresponding
dual subspace embedding. 

\begin{lemma}\label{lem.654}
For each $g\in G$, 
$$
E_g\cdot(S(V^g)\ox{\Wedge}^{{\bu}-\codim V^g} (V^g)^\ast{\det}_g^\perp)=0
$$
and
$$
\left(\im(E_g\cdot-)\right)\cap \left(S(V^g)\ox{\Wedge}^{{\bu}-\codim V^g} (V^g)^\ast{\det}_g^\perp\right)=0.
$$
That is to say, the subspaces $S(V^g)\ox{\Wedge}^{{\bu}-\codim V^g} (V^g)^\ast{\det}_g^\perp$ consist entirely of cocycles and contain no nonzero coboundaries.
\end{lemma}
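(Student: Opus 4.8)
The plan is to reduce the statement to the decomposition of $V$ induced by $g$, after which both assertions follow from a tensor factorization of $S(V)\ox\Wedge^\bu V^\ast$. First I would apply Lemma~\ref{lem723} with $M=V$, writing $V=V^g\oplus W$ with $W=(1-g)V$ and, dually, $V^\ast=(V^\ast)^g\oplus(1-g)V^\ast$. Identifying $(V^\ast)^g$ with $(V^g)^\ast$ and $(1-g)V^\ast$ with $W^\ast=((1-g)V)^\ast$ (each as the annihilator of the complementary summand), these identifications are exactly the subspace embeddings defining (\ref{bleh}); in particular $\det_g^\perp=\Wedge^{c}W^\ast g$ with $c:=\codim V^g=\dim W$. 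Hence $S(V)\ox\Wedge^\bu V^\ast\cong S(V^g)\ox S(W)\ox\Wedge^\bu(V^g)^\ast\ox\Wedge^\bu W^\ast$, and the image of (\ref{bleh}) is the subspace in which the $S(W)$-factor equals the constants $k$ and the $\Wedge^\bu W^\ast$-factor equals its top piece $\Wedge^{c}W^\ast$ (the symbol $g$ plays no role, since $E_g$ acts within the $g$-component).

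Next I would record the shape of $E_g$ in basis-free terms. As noted in the text, $E_g=\sum_i(x_i-{}^gx_i)\partial_i$ is the image of $1g$ under the differential, so it corresponds to the tensor $\sum_i(x_i-{}^gx_i)\ox x_i^\ast\in V\ox V^\ast$, which as an endomorphism of $V$ is precisely $1-g$. By Lemma~\ref{lem723} this endomorphism has image $W$ and kernel $V^g$, hence lies in the subspace $W\ox W^\ast\subseteq V\ox V^\ast$. Thus $E_g=\sum_j w_j\ox\eta_j$ for finitely many $w_j\in W$ and $\eta_j\in W^\ast$, acting by $E_g(a\ox\alpha)=\sum_j w_j a\ox(\eta_j\wedge\alpha)$ for $a\in S(V)$ and $\alpha\in\Wedge^\bu V^\ast$.

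The two claims are then immediate. For the first, a typical element of the image of (\ref{bleh}) has exterior part of the form $\alpha\wedge\nu$ with $\nu$ a generator of $\Wedge^{c}W^\ast$; since each $\eta_j\in W^\ast$ and $\dim W^\ast=c$, the wedge $\eta_j\wedge(\alpha\wedge\nu)$ vanishes, so $E_g$ kills this subspace entirely. For the second, each $w_j$ lies in $W$, so $w_j a\in(W)$, the ideal of $S(V)$ generated by $W$; therefore $\im(E_g)\subseteq(W)\ox\Wedge^\bu V^\ast$, which under the tensor factorization is the subspace whose $S(W)$-factor lies in the maximal ideal $WS(W)$. The image of (\ref{bleh}) has $S(W)$-factor equal to the constants, and the constants meet $WS(W)$ only in $0$; hence the intersection of $\im(E_g)$ with the image of (\ref{bleh}) is $0$, which is the second assertion.

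I do not expect a serious obstacle. The only points requiring care are the bookkeeping of the identifications $(V^\ast)^g\cong(V^g)^\ast$ and $(1-g)V^\ast\cong W^\ast$ through the symmetric and exterior algebras, and the (easy) observation that since $E_g$ is independent of the choice of basis, the tensor it represents genuinely sits in $W\ox W^\ast$, not merely in $V\ox V^\ast$. If one also wanted the stronger fact that (\ref{bleh}) exhausts the cohomology, a K\"unneth argument over the factorization above would supply it, but that is not needed for the present lemma.
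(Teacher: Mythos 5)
Your proof is correct and is essentially the paper's argument: the paper chooses a basis adapted to $V=V^g\oplus(1-g)V$ to see that $E_g=\sum_{i>l}(x_i-{}^gx_i)\partial_i$ has exterior components in $((1-g)V)^\ast$ (so it annihilates anything divisible by $\det_g^\perp$) and polynomial components in $(1-g)V$ (so its image lies in $\mathfrak{I}_g\ox\Wedge^\bu V^\ast$, which meets $S(V^g)\ox\Wedge^{\bu-\codim V^g}(V^g)^\ast\det_g^\perp$ trivially). Your basis-free observation that $E_g$, viewed as the tensor of the endomorphism $1-g$, lies in $W\ox W^\ast$ is just a coordinate-free packaging of the same two facts.
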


\begin{proof}
If we choose a basis $\{x_1,\dots, x_n\}$ for $V$ such that the first $l$ elements are a basis for $V^g$, and the remaining are a basis for $(1-g) V$, then we have
$$
E_g=\sum_{i=1}^n(x_i-{^gx_i})\partial_i=\sum_{i> l}(x_i-{^gx_i})\partial_i,
$$
since $x_i={^gx_i}$ for all $i\leq l$.  So $E_g\det^\perp_g=0$ and, if we let $\mathfrak{I}_g$ denote the ideal in $S(V)$ generated by $(1-g)V$, we have
$$
 E_g \left(S(V)\ox{\Wedge}^\bu V^\ast g\right)
 \subset \mathfrak{I}_g\ox {\Wedge}^\bu V^\ast g. 
$$
The second statement here implies that the proposed intersection is trivial.
\end{proof}

By the above information, there is an induced map
$$
S(V^g)\ox{\Wedge}^{{\bu}-\codim V^g} (V^g)^\ast{\det}_g^\perp 
  \ \longrightarrow \ \coh^{\bu}\big(S(V)\ox{\Wedge}^{\bu} V^\ast g\big)
$$
which is {\it injective}.  The following is a rephrasing of Farinati's calculation \cite{farinati}.

\begin{proposition}\label{prop_hhS(V)G}
The induced maps
$$
S(V^g)\ox{\Wedge}^{{\bu}-\codim V^g} (V^g)^\ast{\det}_g^\perp 
\ \longrightarrow \ \coh^{\bu}\big(S(V)\ox{\Wedge}^{\bu} V^\ast g\big)
$$
are isomorphisms for each $g\in G$, and so there is an isomorphism
\begin{equation}\label{751}
\bigoplus_{g\in G}\big(S(V^g)\ox{\Wedge}^{{\bu}-\codim V^g} (V^g)^\ast{\det}_g^\perp \big) \ \overset{\cong}\longrightarrow \  
\coh^{\bu}\big(\bigoplus_{g\in G}(S(V)\ox{\Wedge}^{\bu} V^\ast g)\big).
\end{equation}
\end{proposition}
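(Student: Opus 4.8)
The plan is to prove Proposition~\ref{prop_hhS(V)G} by reducing to a single group element $g$ and then computing the cohomology of the complex $(S(V)\ox \Wedge^\bu V^\ast g, E_g)$ explicitly via a Koszul-type argument. Fix $g\in G$ and choose a basis $\{x_1,\dots,x_n\}$ of $V$ adapted to the decomposition $V=V^g\oplus (1-g)V$ of Lemma~\ref{lem723}, with $x_1,\dots,x_l$ spanning $V^g$ and $x_{l+1},\dots,x_n$ spanning $(1-g)V$. As recorded in the proof of Lemma~\ref{lem.654}, in this basis $E_g=\sum_{i>l}(x_i-{}^gx_i)\partial_i$, and the vectors $y_i:=x_i-{}^gx_i$ for $i>l$ again form a basis of $(1-g)V$ (the map $1-g$ is invertible on $(1-g)V$). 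So after this change of basis the complex $(S(V)\ox\Wedge^\bu V^\ast g, E_g)$ is, up to the harmless twist by the one-dimensional space $kg$, identified with the Koszul complex on $S(V)=S(V^g)\ox S((1-g)V)$ for the regular sequence $y_{l+1},\dots,y_n$ acting by contraction against the dual basis vectors $\partial_{l+1},\dots,\partial_n$.

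The key step is then a standard Koszul computation: since $y_{l+1},\dots,y_n$ is a regular sequence in $S(V)$ generating the ideal $\mathfrak{I}_g$, the Koszul complex $S(V)\ox\Wedge^\bu\big((1-g)V\big)^\ast$ with this differential is a resolution of $S(V)/\mathfrak{I}_g\cong S(V^g)$ concentrated in the top exterior degree $\codim V^g=n-l$; that is, its cohomology is $S(V^g)$ sitting in degree $n-l$ and zero elsewhere. Tensoring back with the "spectator" factor $\Wedge^\bu(V^g)^\ast$ (on which $E_g$ acts as zero) and with $kg$, and being careful to bookkeep the exterior degree shift, we get
\[
\coh^{\bu}\big(S(V)\ox\Wedge^\bu V^\ast g\big)\;\cong\; S(V^g)\ox\Wedge^{\bu-\codim V^g}(V^g)^\ast\,{\det}_g^\perp,
\]
where $\det_g^\perp=\Wedge^{\codim V^g}\big((1-g)V\big)^\ast g$ supplies precisely the top-degree generator. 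One then checks that this abstract isomorphism agrees with the map induced by the embedding~(\ref{bleh}): by Lemma~\ref{lem.654} that embedding lands in cocycles and meets no coboundaries, so it induces an injection into cohomology, and a dimension count in each polynomial/exterior degree (matching graded pieces of $S(V^g)\ox\Wedge^\bullet(V^g)^\ast$ against the Koszul cohomology just computed) forces it to be surjective as well, hence an isomorphism. Summing over $g\in G$ gives~(\ref{751}).

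I expect the main obstacle to be purely bookkeeping rather than conceptual: carefully tracking the exterior-degree grading and the sign/determinant twist so that the Koszul cohomology generator is correctly identified with $\det_g^\perp$ and so that the degree shift $\bu\mapsto\bu-\codim V^g$ comes out right. A secondary point requiring a sentence of care is basis-independence — the statement is phrased invariantly, so after doing the computation in the adapted basis one should remark (as the paper already does for $E_g$) that the resulting identification does not depend on the choice. Everything else — regularity of $y_{l+1},\dots,y_n$, exactness of the Koszul complex, the Künneth-type splitting off of the $V^g$ factor — is standard commutative algebra, and the citation to Farinati~\cite{farinati} indicates this is a known computation being re-derived in the present notation.
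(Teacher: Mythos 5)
Your argument is correct, but it is worth noting that the paper does not actually prove this proposition: it establishes only the injectivity half (via Lemma~\ref{lem.654} and the induced map on cohomology) and defers surjectivity to Farinati's calculation in \cite{farinati}. What you supply is a self-contained replacement for that citation, and it is the standard one: since $E_g\cdot-$ is exterior multiplication by $\sum_{i>l}(x_i-{}^gx_i)\partial_i$ and the linear forms $y_i=x_i-{}^gx_i$ ($i>l$) form a regular sequence generating $\mathfrak{I}_g$, the factor $\big(S(V)\ox\Wedge^\bu((1-g)V)^\ast,\,E_g\big)$ is the Koszul \emph{cochain} complex of that sequence, with cohomology $S(V)/\mathfrak{I}_g\ox\Wedge^{\codim V^g}((1-g)V)^\ast$ concentrated in top degree; tensoring with the spectator $\Wedge^\bu(V^g)^\ast$ and with $kg$ gives exactly the right-hand side, and the representatives are precisely the image of the embedding~(\ref{bleh}), so the induced map is onto. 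One terminological slip to fix: the differential is exterior multiplication (it \emph{raises} exterior degree), not ``contraction against the dual basis vectors,'' so the complex in question is the dual of the usual Koszul resolution --- which is why the cohomology sits in top degree $\codim V^g$ rather than degree $0$; your subsequent statements are consistent with this correct picture, but the word ``contraction'' and the phrase ``is a resolution of $S(V)/\mathfrak{I}_g$'' mix the two dual conventions and should be rephrased. Also note that your closing dimension count is a little more than is needed: in top Koszul degree the cocycles are everything and the coboundaries are exactly $\mathfrak{I}_g\ox\Wedge^{\codim V^g}((1-g)V)^\ast$, so the embedded subspace is literally a complement to the coboundaries and surjectivity is immediate without counting. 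What your approach buys over the paper's is a complete, elementary proof in the paper's own notation; what the citation buys the authors is brevity and the ability to treat the identification as known input.
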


Recalling that the codomain of (\ref{751}) is the  cohomology $\HH^{\bu}(S(V),S(V)\# G)$, the second portion of this proposition gives an identification
\begin{equation}\label{eqn:ds}
\bigoplus_{g\in G}\big(S(V^g)\ox{\Wedge}^{\bu-\codim V^g} (V^g)^\ast{\det}_g^\perp \big) = \HH^{\bu}(S(V),S(V)\# G).
\end{equation}
In addition to providing this description of the cohomology, the embedding (\ref{bleh}) is compatible with the $G$-action in the following sense.

\begin{proposition}\label{prop.Gsubcmplx}
\begin{enumerate}
\item
For any $g,h\in G$ there is an equality
$$
\big(S(V^g)\ox{\Wedge}^{{\bu}-\codim V^g} (V^g)^\ast{\det}_g^\perp\big)\cdot h=S(V^{h^{-1}gh})\ox{\Wedge}^{{\bu}-\codim V^{h^{-1}gh}} (V^{h^{-1}gh})^\ast{\det}_{h^{-1}gh}^\perp
$$
in $\bigoplus_{g} S(V)\ox{\Wedge}^{\bu} V^\ast g$.
\item The sum $\bigoplus_g S(V^g)\ox{\Wedge}^{{\bu}-\codim V^g} (V^g)^\ast{\det}_g^\perp$ is a $G$-subcomplex of the sum $\bigoplus_{g} S(V)\ox{\Wedge}^{\bu} V^\ast g$.
\item The isomorphism
$$
\bigoplus_{g\in G}\big(S(V^g)\ox{\Wedge}^{{\bu}-\codim V^g} (V^g)^\ast{\det}_g^\perp \big)\overset{\cong}\longrightarrow 
\coh^{\bu}\big(\bigoplus_{g\in G}(S(V)\ox{\Wedge}^{\bu} V^\ast g)\big)
$$
is one of graded $G$-modules.
\end{enumerate}
\end{proposition}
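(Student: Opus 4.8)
The plan is to prove the three parts in order, as (2) and (3) are essentially consequences of (1). The main work is part (1), which is a direct computation with the explicit $G$-action formula (\ref{eq.actn}).

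\medskip

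\noindent\textbf{Part (1).} First I would unwind the action (\ref{eq.actn}) on a typical basis element of the subspace $S(V^g)\ox\Wedge^{\bu-\codim V^g}(V^g)^\ast\det_g^\perp$. Such an element has the form $(a\ox f_1\cdots f_l)\,\omega_g\, g$ where $a\in S(V^g)$, the $f_i$ lie in $(V^g)^\ast$, and $\omega_g$ is a generator of $\det_g^\perp=\Wedge^{\codim V^g}((1-g)V)^\ast g$; here I am folding the $\det_g^\perp$ factor into the wedge as in the embedding (\ref{bleh}). Applying $\cdot h$ sends the group component $g$ to $h^{-1}gh$, sends $a$ to ${}^{h^{-1}}a$, and sends each $f_i$ to $f_i^h$, and likewise acts on the $((1-g)V)^\ast$ factor of $\omega_g$. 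The key geometric facts I need are: $h^{-1}\cdot V^g=V^{h^{-1}gh}$ (so $S(V^g)$ maps to $S(V^{h^{-1}gh})$ under $a\mapsto{}^{h^{-1}}a$, and $(V^g)^\ast$ maps to $(V^{h^{-1}gh})^\ast$ under $f\mapsto f^h$), and $(1-g)V$ maps to $(1-h^{-1}gh)V$ under $h^{-1}\cdot-$ — these are exactly the compatibility statements of Lemma~\ref{lem723}. It follows that $\codim V^g=\codim V^{h^{-1}gh}$ and that $h$ carries $\det_g^\perp$ isomorphically onto $\det_{h^{-1}gh}^\perp$. Assembling these, the image of the displayed subspace under $\cdot h$ lands inside $S(V^{h^{-1}gh})\ox\Wedge^{\bu-\codim V^{h^{-1}gh}}(V^{h^{-1}gh})^\ast\det_{h^{-1}gh}^\perp$; applying the same argument with $h^{-1}$ in place of $h$ (and conjugating $h^{-1}gh$ back) gives the reverse inclusion, hence equality.

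\medskip

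\noindent\textbf{Part (2).} The sum $\bigoplus_g S(V^g)\ox\Wedge^{\bu-\codim V^g}(V^g)^\ast\det_g^\perp$ is already known to be a subcomplex of $\bigoplus_g S(V)\ox\Wedge^\bu V^\ast g$: indeed Lemma~\ref{lem.654} shows each $E_g$ annihilates the $g$-summand, so the differential restricts to (in fact, kills) it. Part (1) shows the $G$-action permutes the summands among themselves, so the sum is a $G$-stable subcomplex, i.e.\ a $G$-subcomplex. That the $G$-action is compatible with the differential is inherited from the ambient complex $\Hom_{S(V)^e}(K,S(V)\#G)$, which is a $G$-complex by construction.

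\medskip

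\noindent\textbf{Part (3).} The map in question is the one from Proposition~\ref{prop_hhS(V)G}, obtained by passing to cohomology from the inclusion of subcomplexes in part (2). Since that inclusion is $G$-equivariant (part (2)) and the induced map on cohomology is already known to be an isomorphism (Proposition~\ref{prop_hhS(V)G}), the induced map is an isomorphism of $G$-modules; graded-ness is automatic since every map involved respects the cohomological grading. I expect the main obstacle to be purely bookkeeping in part (1): carefully tracking how $h$ and $h^{-1}$ act on the dual-space factors and on the $\det_g^\perp$ line (including signs, which are harmless here since $\det_g^\perp$ is one-dimensional), and making sure the ``$\omega_g$'' generator is transported correctly. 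No deep idea is needed beyond the compatibility clause of Lemma~\ref{lem723}; everything else is formal.
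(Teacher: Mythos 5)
Your proposal is correct and follows essentially the same route as the paper: part (1) is the explicit computation with the action formula (\ref{eq.actn}) combined with the compatibility clause of Lemma~\ref{lem723} (the paper phrases the dual-space step via the characterization of $(V^g)^\ast$ and $((1-g)V)^\ast$ as functions vanishing on the complementary summand, which is what your ``$f\mapsto f^h$ carries $(V^g)^\ast$ to $(V^{h^{-1}gh})^\ast$'' amounts to), and parts (2) and (3) are deduced formally from (1) and Proposition~\ref{prop_hhS(V)G} exactly as you do.
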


\begin{proof}
From Lemma \ref{lem723}, and the descriptions of $(V^g)^\ast$ and $((1-g)V)^\ast$ as those functions vanishing on $(1-g)V$ and $V^g$ respectively, we have
$$
(V^g)^*\cdot h= \{ \text{functions vanishing on }h^{-1}(1-g)V\}=(V^{h^{-1}gh})^*
$$
$$
((1-g)V)^*\cdot h= \{ \text{functions vanishing on }h^{-1}V^g\}=((1-h^{-1}gh)V)^* 
$$
and $h^{-1}\cdot S(V^g)=S(V^{h^{-1}gh})$.  This, along with the description (\ref{eq.actn}) gives the equality in (1).  Statement (2) follows from (1), and (3) follows from (2) and Proposition~\ref{prop_hhS(V)G}.
\end{proof} 

\subsection{The $\phi$-circle product formula}

We will compute first with the complex 
\[
  \Hom_{S(V)^e}(K,S(V)\# G)=\bigoplus_{g\in G}S(V)\ox{\Wedge}^{\bu} V^\ast g,
\]
and then restrict to $G$-invariant elements to make conclusions about
the cohomology $\HH^{\bu}(S(V)\# G)$.
Letting $\phi_K$ be the map 
of Definition~\ref{def:phi} and $\phi_{\widetilde{K}} = \phi_K\ot \id_{kG}$
as in equation (\ref{eqn:phiKtilde}), $\phi:= \phi_{\widetilde{K}}$ 
gives rise to a perfectly good {\it bilinear operation}
$$
Xg\circ_\phi Yh=\big(w\mapsto (-1)^{|w_1||Y|} Xg(\phi(w_1\ox Y(w_2)\ox {^hw_3}))
  h \big)  ,
$$
which need not be a chain map.  Here $X,Y\in S(V)\ox{\Wedge}^{\bu} V^\ast$, and $w\in K$.  We can also define the $\phi$-bracket in the most naive manner as 
$$
[Xg, Yh]_{\phi}=Xg\circ_{\phi} Yh- (-1)^{(|X|-1)(|Y|-1)}
   Yh\circ_{\phi} Xg.
$$
This operation, again, need not be well behaved at all on non-invariant functions, but it will be a bilinear map.
\par

In the lemma below, we 
give a formula for the $\phi$-circle product of special types of
elements. 
Our formula may be compared with \cite[Lemma~4.1]{Halbout-Tang10} and \cite[Theorem~7.2]{SW2}. 
Due to the structure of the Hochschild cohomology of $S(V)\# G$
stated in Proposition~\ref{prop_hhS(V)G}, this formula will in fact  
suffice to compute all brackets. 
To do this, we will only need to consider $G$-invariant elements
and compute relevant $\phi$-circle products on summands representing 
elements in $\HH^{\bu}(S(V),S(V)g)
\times \HH^{\bu}(S(V),S(V)h)$ for all pairs $g,h$ in $G$. 
This we will do in Section~\ref{sec:brackets}.

\begin{lemma}\label{lem:circles}
Let $g,h\in G$, $p\geq 0$, and 
$\omega_h$ any generator for ${\Wedge}^{\codim V^h}((1-h)V)^\ast$.  Consider any $X_i\in S(V)\ox V^\ast$, $\bar{Y}\in{\Wedge}^{p} V^\ast\omega_h$, and $Y=v_1\dots v_t\bar{Y}$ for $v_i\in V$.  Then 
\[
(X_1\dots X_d g\circ_\phi Yh)=\hspace{10.6cm}
\]

\vspace{-.6cm}

\[
\sum_{l,r,\sigma} 
\zeta_{r,|Y|}^{l,d,t}v_{\sigma(1)}\dots v_{\sigma(r-1)}X_l(v_{\sigma(r)}){^gv_{\sigma(r+1)}}\dots {^g v_{\sigma(t)}}(X_1\dots X_{l-1})\bar{Y}(X_{l+1}\dots X_d)gh
\]
where the sum is over $1\leq l\leq d$, $1\leq r\leq t$, $\sigma\in S_t$, and 
the coefficients $\zeta^{l,d,t}_{r,m}$ are the nonzero rational numbers
$$
\zeta^{l,d,t}_{r,m}=  (-1)^{(m-1)(l-1)} 
\frac{
(r+l-2)!(t-r+d-l)!}{(r-1)!(t-r)!(d+t+l-1)!}.
$$
\end{lemma}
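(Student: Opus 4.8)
The plan is to compute the $\phi$-circle product $X_1\cdots X_d\,g \circ_\phi Y h$ directly from the definition in~(\ref{eqn:phi-circle-def}), applied to a Koszul generator, using the explicit diagonal map $\Delta_K$ of~(\ref{cmltp}) and the explicit formula for $\phi$ from Definition~\ref{def:phi}. First I would fix notation: write $Y = v_1\cdots v_t\,\bar Y$ with $\bar Y \in \Wedge^p V^\ast \omega_h$, so that $|Y| = p + \codim V^h =: m$, and observe that $X_1\cdots X_d\, g$, viewed as an element of $\Hom_{S(V)^e}(K, S(V)\# G) = \bigoplus_g S(V)\otimes \Wedge^\bu V^\ast g$, is the function on $K_d$ sending $1\otimes o(v_I)\otimes 1$ to $\det(X_i(v_{i_j}))\,g$ on the appropriately ordered index set. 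The circle product is a function on $K_{m+d-1}$, so I evaluate it on a generator $1\otimes o(v_1,\dots,v_{m+d-1})\otimes 1$ (reusing the $v_i$ names is safe since the final answer is multilinear).

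The key computation has three nested layers. First, apply $\Delta_K$ twice (i.e.\ use $(\Delta_K\otimes \id)\Delta_K$) to split the generator as $w_1\otimes w_2\otimes w_3$, summing over ordered disjoint subsets with signs as in~(\ref{cmltp}); since $Y$ lives in $\Wedge^m V^\ast$ and $X_i$'s in $V^\ast$, only the terms with $|w_2| = m$ and $|w_1| + |w_3| = d-1$ survive, and moreover $w_1$, $w_3$ must be purely in the wedge part. Second, evaluate $Y$ on $w_2$: because $\bar Y$ is a top form in the $((1-h)V)^\ast$ directions and $Y = v_1\cdots v_t\bar Y$, the value $Y(w_2)$ is a scalar multiple of a monomial $v_1\cdots v_t$ times the pairing of $\bar Y$ against the wedge, contributing the $v$'s that reappear in the statement. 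Third, feed $w_1\otimes Y(w_2)\otimes {}^h w_3$ into $\phi$: here $w_1$ and ${}^h w_3$ are wedge-type Koszul elements (degrees $l-1$ and $d-l$ for the relevant term, after choosing which $X_l$ pairs with $v_{\sigma(r)}$) and the middle is a symmetric monomial $v_1\cdots v_t g$, so the relevant case of Definition~\ref{def:phi} is the last one, $\phi_{s+z}$, with $s = d-l$, $z = l-1$, $t \mapsto t$; this produces the product $v_{\sigma(1)}\cdots v_{\sigma(r-1)}\otimes o(w_1,\dots,w_z, v_{\sigma(r)}, u_1,\dots,u_s)\otimes v_{\sigma(r+1)}\cdots v_{\sigma(t)}$ with coefficient $c^{s,t,z}_r$. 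Finally, apply the outer $X_1\cdots X_d\, g$ to the resulting $K_d$-element, pairing $X_l$ against $v_{\sigma(r)}$ and the remaining $X_i$'s against the $w$'s and $u$'s, picking up the sign ${}^g$'s on the surviving $v_{\sigma(r+1)}\cdots v_{\sigma(t)}$ (from the right $h$-module structure in the formula for $\circ_\phi$, with $h\mapsto g$ after the $gh$ recollection), and reassemble into $(X_1\cdots X_{l-1})\bar Y(X_{l+1}\cdots X_d)$.

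The main obstacle, and the bulk of the work, is the bookkeeping of signs and binomial/factorial coefficients: I must carefully track the sign $(-1)^{|w_1||Y|}$ in~(\ref{eqn:phi-circle-def}), the shuffle signs from the two applications of $\Delta_K$, the sign $(-1)^{sz+z}$ inside $c^{s,t,z}_r$, the Koszul signs incurred when $X_1\cdots X_d$ is evaluated on $o(w_1,\dots,w_z,v_{\sigma(r)},u_1,\dots,u_s)$ (reordering to group $X_1,\dots,X_{l-1}$ with the $w$'s, $X_l$ with $v_{\sigma(r)}$, and $X_{l+1},\dots,X_d$ with the $u$'s), and to see that these collapse to exactly $(-1)^{(m-1)(l-1)}$ in $\zeta^{l,d,t}_{r,m}$. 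On the coefficient side, I would count: each choice of which $l-1$ of the $X$'s go "left" contributes (after the $\Delta_K$ sums produce all orderings) a multiplicity that, combined with the $(s+t+z)! = (d+t+l-1)!$ denominator from $c^{s,t,z}_r$ and the products $\prod_{i=0}^{z-1}(r+i) = (r+l-2)!/(r-1)!$ and $\prod_{j=1}^{s}(t-r+j) = (t-r+d-l)!/(t-r)!$, yields precisely $\zeta^{l,d,t}_{r,m}$. I would organize this as: (i) reduce to the surviving diagonal terms; (ii) identify the operative case of $\phi$; (iii) compute the scalar $X_l(v_{\sigma(r)})$ and the residual $\bar Y$-pairing; (iv) assemble signs; (v) assemble coefficients; and finally note nonvanishing of $\zeta^{l,d,t}_{r,m}$ is immediate since it is a ratio of factorials times $\pm 1$. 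The characteristic-zero hypothesis is used throughout to divide by the factorials appearing in $\phi$.
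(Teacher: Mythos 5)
Your proposal is correct and follows essentially the same route as the paper's proof: evaluate $X_1\cdots X_d\,g\circ_\phi Yh$ on Koszul generators directly from the definition (\ref{eqn:phi-circle-def}), identify the surviving terms of the iterated diagonal (the paper does this by choosing a basis adapted to $V^h\oplus(1-h)V$ and testing on the reordered monomials $o(y_{j_{l+1}},\dots,y_{j_d},y_1,\dots,y_m,y_{j_1},\dots,y_{j_{l-1}})$, which is your "only $|w_2|=m$ survives" step in adapted coordinates), apply the $\phi_{s+z}$ case of Definition~\ref{def:phi} with $s=d-l$, $z=l-1$ to get the coefficient $c_r^{d-l,t,l-1}$, and then collect the reordering/Koszul signs into $(-1)^{(m-1)(l-1)}$. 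The only presentational difference is that the paper also records explicitly that the circle product vanishes on monomials not of the adapted form, which your multilinearity framing covers implicitly.
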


\begin{proof}
Choose bases $y_1,\ldots, y_s$ of $(V^h)^{\perp}$ and $y_{s+1},\ldots, y_n$ of $V^h$.
Let $y_1^*,\ldots, y_n^*$ be the dual basis of $V^*$.  Recall the notation $\partial_j=1\ot y_j^\ast$.  We may write each of the $X_i$ as a sum of elements of the form $f\partial_l$, with $f\in S(V)$, and write $\bar{Y}$ as a sum of elements of the form $\partial_1\dots\partial_s \partial_J$, with $J$ an ordered subset of $\{{s+1},\dots ,n\}$.  By expanding both sides of the proposed equality accordingly, one sees that it suffices to prove the result in the case in which the $X_i$ are of the form $f_i\partial_{j_i}^\ast$ and $\bar{Y}=\partial_1\dots\partial_s \partial_J$.  We are free to reorder indices if necessary so that $\bar{Y}=\partial_1\dots\partial_m$ with $m\geq s$.
\par

Let us check the value of the $\phi$-circle product when applied to monomials of the form $o(y_{j_{l+1}},\ldots, y_{j_d},
   y_1,\ldots, y_m, y_{j_1},\ldots, y_{j_{l-1}})$, for arbitrary $l$.  If any of the $y_{j_i}$ are in $(V^h)^\perp$, or if any of the indices are repeated, then the given monomial will be $0$.  So we may assume that each of the $y_{j_i}$, for $i\neq l$, are in $V^h$ and that none of the indices are repeated.
\par

Applying the definition~\eqref{eqn:phi-circle-def}
of the $\phi$-circle product and the diagonal map (\ref{cmltp}), we find 
\[
\begin{aligned}
   & ((X_1\cdots X_d g)\circ_{\phi} (Yh)) \big( o(y_{j_{l+1}},\ldots, y_{j_d},
   y_1,\ldots, y_m, y_{j_1},\ldots, y_{j_{l-1}})\big) \hspace{1cm} \\
& \hspace{1cm}= (-1)^{(d-l)m} (X_1\cdots X_d g)\big(\phi (o(y_{j_{l+1}},\ldots, y_{j_d}) \ot
   v_1\cdots v_t \ot  {}^ho(y_{j_1},\ldots, y_{j_{l-1}})) h \big) .
\end{aligned}
\]
Since all the $y_{j_i}$ are in $V^h$, we may replace ${}^ho(y_{j_1},\ldots,y_{j_{l-1}})$ 
by $o(y_{j_1},\ldots, y_{j_{l-1}})$ in the above expression.  Therefore the value of $(X_1\cdots X_d g)\circ_{\phi} (Yh)$ on
the given monomial is as follows, using
Definition~\ref{def:phi} of $\phi$: 
\[
\begin{aligned}
 & (-1)^{(d-l)m}(X_1\cdots X_d g)\big(\phi(o(y_{j_{l+1}},\ldots, y_{j_d})\ot
   v_1\cdots v_t \ot o(y_{j_1},\ldots,y_{j_{l-1}}) h\big) \hspace{1.5cm}\\
& \hspace{.1cm}=(-1)^{(d-l)m} (X_1\cdots X_d g) \big( 
    \sum_{\substack{1\leq r\leq t\\ \sigma\in S_t}}
   c_r^{d-l, t,l-1} 
  v_{\sigma(1)}\cdots v_{\sigma(r-1)} \ot  \\
&\hspace{4.5cm}   o(y_{j_1},\ldots,y_{j_{l-1}}, v_{\sigma(r)}, y_{j_{l+1}},\ldots,y_{j_d})
  \ot v_{\sigma(r+1)}\cdots v_{\sigma(t)} h \big)\\
& \hspace{.1cm}= (-1)^{(d-l)m } \! \sum_{\substack{1\leq r\leq t\\
   \sigma\in S_t}} c_r^{d-l, t, l-1} 
  v_{\sigma(1)}\cdots v_{\sigma(r-1)}  
    X_l(v_{\sigma(r)})  f_1\cdots f_{l-1} f_{l+1} \cdots f_d g
     v_{\sigma(r+1)}\cdots v_{\sigma(t)} h .
\end{aligned}
\] 
The above argument can also be used to show that the circle product vanishes on all monomials $o(y_I)$ which are not of the form $\pm o(y_{j_1},\ldots,y_{j_{l-1}},y_1,\ldots,y_m,y_{j_{l+1}},\ldots,y_{j_d})$ for some $l$.  To obtain the equality of the theorem, we must reorder the factors in our
initial argument: 
\[
\begin{aligned}
&  o(y_{j_{l+1}},\ldots, y_{j_d},y_1,\ldots, y_m,y_{j_1},\ldots, y_{j_{l-1}}) \hspace{1cm}\\
& \hspace{1cm} = 
(-1)^{m(l-1) + (d-l)(l-1)+ (d-l)m} o(y_{j_1},\ldots,y_{j_{l-1}},y_1,\ldots,y_m,
y_{j_{l+1}},\ldots, y_{j_{d}}) . 
\end{aligned}
\]
Multiply by this coefficient and compare values with those of the function in
the statement of the theorem to see that they are the same. 
\end{proof}

\subsection{Projections onto group components} 
For each $g\in G$, we will construct a chain retraction
$$
p_g: S(V)\ox{\Wedge}^{\bu} V^\ast g \ \longrightarrow \ S(V^g)\ox{\Wedge}^{\bu-\codim V^g} (V^g)^\ast{\det}_g^\perp
$$
onto the subspace $S(V^g)\ox{\Wedge}^{\bu-\codim V^g} (V^g)^\ast{\det}_g^\perp$.  (The differential on the codomain is taken to be $0$.)  Simply by virtue of being a retract of an injective quasi-isomorphism, each $p_g$ will also be a quasi-isomorphism.  
\par

In the sections that follow we will often think of the $p_g$ as quasi-isomorphisms from $S(V)\ox{\Wedge}^{\bu} V^\ast g$ to itself, simply by composing with the embedding.  We outline the construction of $p_g$ below.

\begin{construction_pg} From the canonical decomposition $V=V^g\oplus (1-g)V$ we get an identification $S(V)=S(V^g\oplus (1-g)V)$ and canonical projection $p^1_g:S(V)\to S(V^g)$.  We also get a canonical decomposition of the dual space and its higher wedge powers,
$$
V^\ast=(V^g)^\ast\oplus ((1-g)V)^\ast\ \ \mathrm{and}\ \ {\Wedge}^i V^\ast=\bigoplus_{i_1+i_2=i}({\Wedge}^{i_1}(V^g)^\ast)\wedge({\Wedge}^{i_2}((1-g)V)^\ast),
$$
whence we have a second canonical projection
$$
p^2_g:{\Wedge}^{\bu} V^\ast\to {\Wedge}^{\bu-\codim V^g} (V^g)^\ast\wedge ({\Wedge}^{\codim V^g}((1-g)V)^\ast)={\Wedge}^{\bu-\codim V^g} (V^g)^\ast{\det}_g^\perp .
$$
We now define $p_g$ as the tensor product $p_g^1\ox p_g^2$,
\begin{equation}\label{773}
p_g:S(V)\ox {\Wedge}^{\bu} V^\ast g \ \longrightarrow \ 
 S(V^g)\ox {\Wedge}^{\bu-\codim V^g}(V^g)^\ast{\det}^\perp_g  . 
\end{equation}
\end{construction_pg}

It is apparent from the construction that each $p_g$ restricts to the identity on the subspace $S(V^g)\ox \Wedge^{\bu-\codim V^g}(V^g)^\ast{\det}_g^\perp$.  Furthermore, since the ideal $\mathfrak{I}_g$ generated by $(1-g)V$ is precisely the kernel of $p_g^1$, and left multiplication by $E_g$ has image in $\mathfrak{I}_g\ox{\Wedge}^\bu V^\ast$, we see that $p_g(E_g\cdot-)=0$.  This is exactly the statement that $p_g$ is a chain map.  Note that, by Proposition \ref{prop_hhS(V)G}, the projections will be quasi-isomorphisms.
\par

Recall that, by Proposition \ref{prop.Gsubcmplx}, the subspace
$$
\bigoplus_g S(V^g)\ox{\Wedge}^{{\bu}-\codim V^g} (V^g)^\ast{\det}_g^\perp
 \ \subset \ \bigoplus_{g} S(V)\ox{\Wedge}^{\bu} V^\ast g
$$
is a $G$-subcomplex.  These projections $p_g$ are compatible with the $G$-action in the sense of 

\begin{lemma}\label{prop_pginv}
\begin{enumerate}
\item For any $Xg\in S(V)\ox {\Wedge}^{\bu} V^\ast g$ the projections $p_g$ and $p_{h^{-1}gh}$ satisfy the relation
$$
p_{h^{-1}gh}((Xg)\cdot h\big)= p_{g}(X_g)\cdot h.
$$
\item The coproduct map
$$
p:\bigoplus_{g} S(V)\ox{\Wedge}^{\bu} V^\ast g \ \longrightarrow \ \bigoplus_g S(V^g)\ox{\Wedge}^{{\bu}-\codim V^g} (V^g)^\ast{\det}_g^\perp,
$$
i.e. the map $p=\mathrm{diag}\{p_g:g\in G\}$, is a $G$-linear quasi-isomorphism.
\item If a sum of elements $\sum_g X_g g$ is $G$-invariant then so is $\sum_g p_g(X_gg)$. 
\end{enumerate}
\end{lemma}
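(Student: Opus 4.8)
The plan is to prove the three parts of Lemma \ref{prop_pginv} in order, with part (1) doing essentially all the work and parts (2) and (3) following formally.

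For part (1), I would argue as follows. By Proposition \ref{prop.Gsubcmplx}(1), right multiplication by $h$ carries $S(V^g)\ox\Wedge^{\bu-\codim V^g}(V^g)^\ast\det_g^\perp$ isomorphically onto $S(V^{h^{-1}gh})\ox\Wedge^{\bu-\codim V^{h^{-1}gh}}(V^{h^{-1}gh})^\ast\det_{h^{-1}gh}^\perp$. So both sides of the claimed identity $p_{h^{-1}gh}\big((Xg)\cdot h\big)=p_g(Xg)\cdot h$ land in the same target space, and it suffices to check that the two maps $S(V)\ox\Wedge^\bu V^\ast g\to S(V^{h^{-1}gh})\ox\Wedge^{\bu-\codim V^{h^{-1}gh}}(V^{h^{-1}gh})^\ast\det_{h^{-1}gh}^\perp$ agree. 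Using the formula \eqref{eq.actn} for the $G$-action, right multiplication by $h$ sends $(a\ox f_1\cdots f_l)g$ to $({}^{h^{-1}}a\ox f_1^h\cdots f_l^h)h^{-1}gh$, where the action on $V^\ast$ is on the right. The construction of $p_g$ factors as $p_g^1\ox p_g^2$ where $p_g^1:S(V)\to S(V^g)$ is induced by the splitting $V=V^g\oplus(1-g)V$ and $p_g^2:\Wedge^\bu V^\ast\to\Wedge^{\bu-\codim V^g}(V^g)^\ast\det_g^\perp$ is induced by the dual splitting $V^\ast=(V^g)^\ast\oplus((1-g)V)^\ast$. The key point is that, by Lemma \ref{lem723}, left multiplication by $h^{-1}$ carries the splitting $V=V^g\oplus(1-g)V$ to the splitting $V=V^{h^{-1}gh}\oplus(1-h^{-1}gh)V$ (since $h^{-1}V^g=V^{h^{-1}gh}$ and $h^{-1}(1-g)V=(1-h^{-1}gh)V$), and dually (using the identifications from the proof of Proposition \ref{prop.Gsubcmplx}) right multiplication by $h$ carries $(V^g)^\ast\oplus((1-g)V)^\ast$ to $(V^{h^{-1}gh})^\ast\oplus((1-h^{-1}gh)V)^\ast$. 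Therefore the following square commutes: applying $p_g^1$ then ${}^{h^{-1}}(-)$ equals applying ${}^{h^{-1}}(-)$ then $p_{h^{-1}gh}^1$, and similarly on the wedge side with $(-)^h$ and the $p^2$'s. Taking the tensor product of these two commuting squares gives exactly the identity in (1). One should also note that the determinant line $\det_g^\perp=\Wedge^{\codim V^g}((1-g)V)^\ast g$ is carried to $\det_{h^{-1}gh}^\perp$ under $(-)\cdot h$, which is again immediate from Lemma \ref{lem723}, so the bookkeeping of which copy of the ground field we are in is consistent.

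For part (2), the map $p=\mathrm{diag}\{p_g\}$ is a quasi-isomorphism because each $p_g$ is (as already observed, each $p_g$ is a retract of the injective quasi-isomorphism \eqref{bleh}, using Proposition \ref{prop_hhS(V)G}), and a direct sum of quasi-isomorphisms is a quasi-isomorphism. That $p$ is $G$-linear is precisely the content of part (1): the $G$-action permutes the summands $S(V)\ox\Wedge^\bu V^\ast g$ according to $g\mapsto h^{-1}gh$, and it permutes the target summands the same way by Proposition \ref{prop.Gsubcmplx}(1), so the relation $p_{h^{-1}gh}\big((Xg)\cdot h\big)=p_g(Xg)\cdot h$ for each $g$ assembles into $p(\xi\cdot h)=p(\xi)\cdot h$ for all $\xi\in\bigoplus_g S(V)\ox\Wedge^\bu V^\ast g$.

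For part (3), if $\sum_g X_g g$ is $G$-invariant, then applying the $G$-linear map $p$ from part (2) gives $p\big(\sum_g X_g g\big)=\sum_g p_g(X_g g)$, and this element is fixed by $G$ because $p$ commutes with the $G$-action and $\sum_g X_g g$ is fixed; hence $\sum_g p_g(X_g g)$ is $G$-invariant.

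I expect part (1) to be the main obstacle — not because it is deep, but because it requires carefully tracking three things simultaneously: the action on $S(V)$ (a left action, via $h^{-1}$), the action on $\Wedge^\bu V^\ast$ (a right action, via $h$), and the determinant twist $\det_g^\perp$, all while keeping straight that the source and target summands are indexed by $g$ versus $h^{-1}gh$. The conceptual content is entirely supplied by Lemma \ref{lem723} and Proposition \ref{prop.Gsubcmplx}; the work is purely in checking that the canonical projections, being built functorially out of the canonical splittings, are equivariant in the appropriate twisted sense. Parts (2) and (3) are then formal consequences.
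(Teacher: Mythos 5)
Your proof is correct and follows essentially the same route as the paper: part (1) from the compatibility of the splittings $V=V^g\oplus(1-g)V$ with the $G$-action supplied by Lemma \ref{lem723} (and the dual statements from the proof of Proposition \ref{prop.Gsubcmplx}), with parts (2) and (3) deduced formally from (1) and the fact that each $p_g$ is a quasi-isomorphism. You simply spell out the equivariance of the canonical projections in more detail than the paper does.
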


\begin{proof}
As was the case in the proof of Proposition \ref{prop.Gsubcmplx}, (1) follows from the compatibilities of the decompositions $V=V^g\oplus (1-g)V$ with the $G$-action given in Lemma \ref{lem723}.  Statements (2) and (3) follow from (1) and the fact that each $p_g$ is a quasi-isomorphism.
\end{proof}

In the following results, we take 
$\mathfrak{I}_g\subset S(V)$ to be the ideal generated by $(1-g)V$, as was done above.  
Given an ordered subset $I=\{i_1,\ldots, i_j\}$ of 
$\{1,\ldots,n\}$, let $\partial_I=\partial_{i_1}\cdots\partial_{i_j}$
in $S(V)\ot \Wedge^{\bu}V^*$ where $\partial_i = 1\ot x_i^*$ as before. 

\begin{lemma}\label{prop789}
Let $\omega_g$ be an arbitrary generator for ${\Wedge}^{\codim V^g}((1-g)V)^\ast$.  The kernel of the projection $p_g$ defined in (\ref{773}) is the sum
$$
\ker(p_g)=\mathfrak{I}_g\ox {\Wedge}^{\bu} V^\ast g+S(V)\cdot\{\partial_Ig:\omega_g\mathrm{\ does\ not\ divide\ }\partial_I\}.
$$
\end{lemma}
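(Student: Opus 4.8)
The plan is to show the two inclusions $\ker(p_g)\supseteq R$ and $\ker(p_g)\subseteq R$, where $R$ denotes the proposed sum $\mathfrak{I}_g\ox\Wedge^\bu V^\ast g+S(V)\cdot\{\partial_I g:\omega_g\nmid\partial_I\}$. For the inclusion $\supseteq$, recall from the Construction of $p_g$ that $p_g=p_g^1\ox p_g^2$ with $\ker(p_g^1)=\mathfrak{I}_g$; hence $\mathfrak{I}_g\ox\Wedge^\bu V^\ast g$ is certainly killed. For the second generating set, I would fix a basis $x_1,\dots,x_n$ of $V$ adapted to the decomposition $V=V^g\oplus(1-g)V$ (first $l$ vectors spanning $V^g$, the rest spanning $(1-g)V$), so that $\omega_g$ is a nonzero scalar multiple of $\partial_{l+1}\cdots\partial_n$. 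Under the induced decomposition $\Wedge^\bu V^\ast=\bigoplus_{i_1+i_2=i}\Wedge^{i_1}(V^g)^\ast\wedge\Wedge^{i_2}((1-g)V)^\ast$, the map $p_g^2$ projects onto the single summand $\Wedge^{\bu-\codim V^g}(V^g)^\ast\wedge\Wedge^{\codim V^g}((1-g)V)^\ast$. An ordered monomial $\partial_I$ lies in this summand precisely when $I$ contains every index $>l$, i.e.\ precisely when $\omega_g\mid\partial_I$; so if $\omega_g\nmid\partial_I$ then $p_g^2(\partial_I)=0$ and therefore $p_g(S(V)\cdot\partial_I g)=0$. This gives $R\subseteq\ker(p_g)$.

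For the reverse inclusion, take an arbitrary element $\xi=\sum_I a_I\,\partial_I g$ with $a_I\in S(V)$ and $\xi\in\ker(p_g)$. Working modulo $\mathfrak{I}_g\ox\Wedge^\bu V^\ast g$ (which is already in $R$), I may replace each coefficient $a_I$ by its image $\bar a_I=p_g^1(a_I)\in S(V^g)$; so it suffices to treat $\xi$ of the form $\sum_I \bar a_I\,\partial_I g$ with $\bar a_I\in S(V^g)$. Split the sum according to whether $\omega_g\mid\partial_I$ or not: the terms with $\omega_g\nmid\partial_I$ lie in $R$, so subtracting them we are reduced to $\xi'=\sum_{\omega_g\mid\partial_I}\bar a_I\,\partial_I g$, and now $p_g(\xi')=\xi'$ because on each such basis element $p_g$ acts as the identity (coefficients already lie in $S(V^g)$, and $\partial_I$ already lies in the image summand of $p_g^2$). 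Since $\xi'=\xi-(\text{element of }R)$ is also in $\ker(p_g)$, we get $\xi'=p_g(\xi')=0$, hence $\xi\in R$. This proves $\ker(p_g)\subseteq R$ and completes the argument.

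The only mildly delicate point — and the step I would expect to need the most care — is the bookkeeping identifying ``$\partial_I$ survives $p_g^2$'' with ``$\omega_g\mid\partial_I$'': one must check that for a basis adapted to $V=V^g\oplus(1-g)V$, a basis element $\partial_I=\partial_{i_1}\cdots\partial_{i_j}$ of $\Wedge^j V^\ast$ lies in the distinguished summand $\Wedge^{j-\codim V^g}(V^g)^\ast\wedge\det_g^\perp$ exactly when $\{l+1,\dots,n\}\subseteq I$, and that the divisibility statement ``$\omega_g$ divides $\partial_I$'' (in the exterior algebra $S(V)\ox\Wedge^\bu V^\ast$, meaning $\partial_I=\pm\,\omega_g\wedge\partial_{I\setminus\{l+1,\dots,n\}}$) is basis-independent, matching the fact that $p_g$ is basis-independent. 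Everything else is the routine linearity reduction to monomials already used elsewhere in this section (cf.\ the proof of Lemma \ref{lem:circles}).
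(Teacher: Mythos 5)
Your proof is correct and follows essentially the same route as the paper: both rest on identifying $\ker(p_g^1)=\mathfrak{I}_g$ and $\ker(p_g^2)$ as the span of the monomials $\partial_I$ not divisible by $\omega_g$ (in a basis adapted to $V=V^g\oplus(1-g)V$), and then combining these via the tensor-product structure of $p_g=p_g^1\ox p_g^2$. The paper simply cites the general fact $\ker(\sigma_1\ox\sigma_2)=\ker(\sigma_1)\ox W_2+W_1\ox\ker(\sigma_2)$, whereas your double-inclusion argument unpacks that same fact by hand; your attention to the adapted basis is warranted, since the description of $\ker(p_g^2)$ by monomials is only valid for such a basis.
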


If we choose bases $\{x_1,\ldots, x_l\}$ of $V^g$ and $\{x_{l+1},\dots, x_n\}$ of $(1-g)V$, and take $\omega_g=\partial_{l+1}\cdots\partial_n$, the second set can be written as
$$
S(V)\cdot\{\partial_Ig:\{l+1,\dots, n\}\mathrm{\ is\ not\ a\ subset\ of\ }I\} .
$$

\begin{proof}
Note that, for $p_g^1$ and $p_g^2$ as in the above construction of $p_g$, we have
$$
\ker(p_g^1)=\mathfrak{I}_g\ \ \text{and}\ \ \ker(p_g^2)=k\{\partial_Ig:\omega_g\mathrm{\ does\ not\ divide\ }\partial_I\}
$$
So the description of $\ker(p_g)$ follows from the fact that for any product of vector space maps $\sigma_1\ox\sigma_2:W_1\ox W_2\to U_1\ox U_2$, its kernel is the sum $\ker(\sigma_1)\ox W_2+W_1\ox\ker(\sigma_2)$.
\end{proof}

\section{Brackets}\label{sec:brackets} 

In this section we assume the characteristic of $k$ is 0, and 
derive a general formula for brackets on Hochschild
cohomology of $S(V)\# G$, using the $\phi$-circle product formula of 
Lemma~\ref{lem:circles} and the projection maps (\ref{773}). 
The Schouten bracket for the underlying symmetric algebra 
features prominently.
We use our formula to obtain several conclusions about brackets, in
particular some vanishing criteria.

We will use the notation and results of Section~\ref{sec:circ-proj}.
In particular, we will express elements of the Hochschild cohomology
$\HH^{\bu}(S(V)\# G)$ as $G$-invariant elements of $\HH^{\bu}(S(V),
S(V)\# G)$ by (\ref{eqn:G-invts}), and we will use the 
identification of cohomology, 
$$
\bigoplus_{g\in G} S(V^g)\ox{\Wedge}^{{\bu}-\codim V^g} (V^g)^\ast {\rm {det}}_g^\perp  =  
\HH^{\bu}(S(V),S(V)\# G)
$$
given by (\ref{eqn:ds}). 
Elements of cohomology $\HH^{\bu}(S(V),S(V)\# G)$ that are nonzero only in the
component indexed by a unique $g$ in $G$ in the above sum 
are said to be {\em supported on $g$}.
Elements  that are nonzero only in 
components indexed by elements in the conjugacy class of  $g$  are
said to be {\em supported on the conjugacy class of $g$}. 
The canonical projections
$
p_g:S(V)\ox{\Wedge}^{\bu} V^\ast g\to S(V^g)\ox{\Wedge}^{\bu} (V^g)^\ast 
{\rm {det}}_g^\perp g 
$,
defined in (\ref{773}), 
will appear in our expressions of  brackets on Hochschild cohomology $\HH^{\bu}(S(V)\# G)$ below.
\par

\subsection{Preliminary information on the Schouten bracket and group actions}

We let the bracket $\{X,Y\}$ denote the standard Schouten bracket on $S(V)\ox {\Wedge}^{\bu} V^\ast\cong{\Wedge}^{\bu}_{S(V)}T$, where $T$ denotes the global vector fields on $\Spec(S(V))$ (or rather, algebra derivations on $S(V)$).  Gerstenhaber brackets in this case are precisely Schouten brackets,
and we briefly verify that our approach does indeed give this expected result: 

\begin{lemma}\label{lem908}
For any $X,Y\in S(V)\ox{\Wedge}^{\bu} V^\ast$,
$$
[X,Y]_\phi=\{X,Y\} .
$$
\end{lemma}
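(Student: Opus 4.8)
The plan is to compute $[X,Y]_\phi$ directly from the $\phi$-circle product formula of Lemma \ref{lem:circles}, specialized to the case $g=h=e$ (the identity element), and then recognize the resulting expression as the classical formula for the Schouten bracket on polyvector fields. First I would reduce to the case where $X$ and $Y$ are decomposable: write $X = X_1\cdots X_d$ and $Y = v_1\cdots v_t\,\bar Y$ with each $X_i \in S(V)\ot V^\ast$, each $v_i \in V$, and $\bar Y \in \Wedge^p V^\ast$; since $[\ ,\ ]_\phi$ is bilinear and $S(V)\ot\Wedge^\bu V^\ast$ is spanned by such products, it suffices to verify the identity on these. Here, crucially, with $g=h=e$ we have $V^g = V$, $\codim V^g = 0$, $\det_g^\perp$ is trivial, and $\omega_h$ can be taken to be $1$; moreover the projection $p_e$ is the identity map, so when $g=h=e$ the Gerstenhaber bracket on cohomology, the $\phi$-bracket, and any projected version all coincide.

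Next I would apply Lemma \ref{lem:circles} with $g=h=e$. In that case $^gv_{\sigma(i)} = v_{\sigma(i)}$ and $gh = e$, so the circle product becomes
\[
X_1\cdots X_d \circ_\phi Y = \sum_{l,r,\sigma} \zeta^{l,d,t}_{r,|Y|}\, v_{\sigma(1)}\cdots v_{\sigma(r-1)}\,X_l(v_{\sigma(r)})\,v_{\sigma(r+1)}\cdots v_{\sigma(t)}\,(X_1\cdots X_{l-1})\,\bar Y\,(X_{l+1}\cdots X_d),
\]
with $\zeta^{l,d,t}_{r,m}$ the explicit rational coefficients from the lemma. The combinatorial heart of the argument is then to simplify the sum over $r$ and $\sigma$: the inner sum $\sum_{1\le r\le t,\ \sigma\in S_t}$ groups monomials according to which vector $v_j$ gets fed into $X_l$, and the sum of the coefficients $\zeta^{l,d,t}_{r,|Y|}$ over all $(r,\sigma)$ landing a fixed $v_j$ in position $r$ should collapse — via the binomial-type identity $\sum_{r} \binom{r+l-2}{l-1}\binom{t-r+d-l}{d-l} = \binom{d+t-1}{d-1}$ — to give exactly the contraction $\sum_j \pm\, v_1\cdots\widehat{v_j}\cdots v_t\, X_l(v_j)$ with the sign $(-1)^{(|Y|-1)(l-1)}$ built in. I would then form the graded commutator $[X,Y]_\phi = X\circ_\phi Y - (-1)^{(|X|-1)(|Y|-1)} Y\circ_\phi X$ and match it term-by-term against the standard Leibniz-rule expansion of the Schouten bracket: $\{X_1\cdots X_d,\,Y\}$ expands as a signed sum over $l$ of $X_1\cdots\widehat{X_l}\cdots X_d$ wedged against $\{X_l, Y\}$, and $\{X_l, Y\}$ for a single derivation $X_l$ is itself the Lie-derivative contraction $\sum_j \pm\, X_l(v_j)\, v_1\cdots\widehat{v_j}\cdots v_t\,\bar Y + (\text{terms where }X_l\text{ contracts }\bar Y)$.

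The main obstacle I anticipate is purely bookkeeping: getting every Koszul sign right across the three separate sign conventions in play — the sign $\zeta^{l,d,t}_{r,m} = (-1)^{(m-1)(l-1)}\cdots$ from Lemma \ref{lem:circles}, the graded-commutator sign $(-1)^{(|X|-1)(|Y|-1)}$ in the $\phi$-bracket, and the signs inherent in the Schouten bracket's Leibniz expansion and in contracting a form $\bar Y$ past the monomial $v_1\cdots v_t$. The cleanest route is probably to first treat the sub-case $\bar Y$ a scalar (so $Y \in S(V)$ is a polynomial and $X_l(v_j)$ is just a partial derivative), confirm the coefficient identity and signs there, then handle the general $\bar Y$ by noting that $X_l$ acts as a derivation on the whole product $v_1\cdots v_t\bar Y$ and that $\phi$ was built to be $\mathrm{GL}(V)$-invariant, which forces the answer to be the unique invariant bilinear bracket extending the polynomial case — namely the Schouten bracket. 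Alternatively, one can invoke that both $[\ ,\ ]_\phi$ and $\{\ ,\ \}$ induce the Gerstenhaber bracket on $\HH^\bu(S(V)) = S(V)\ot\Wedge^\bu V^\ast$ (the former by Theorem \ref{thm:NW}, the latter by the HKR theorem), but since here the complex has zero differential and equals its own cohomology, "inducing the same bracket on cohomology" already means they are equal as operations — so it would suffice to check agreement in low degrees (say $|X|=|Y|=1$ and $|X|=1,|Y|=0$) where both reduce to the obvious formulas, together with the Leibniz rule that both brackets satisfy.
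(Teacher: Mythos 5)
Your closing ``alternative'' is, in essence, the paper's actual proof. The paper first notes that the restriction of $\phi$ to $K\ox_{S(V)}K\subset \K\ox_{S(V)\#G}\K$ is itself a contracting homotopy for $F_K$ over $S(V)$, so the bracket may be computed entirely inside $S(V)\ox\Wedge^\bu V^\ast$; it then cites \cite[\S4]{Negron-Witherspoon} for a particular contracting homotopy $\psi$ with $[X,Y]_\psi=\{X,Y\}$ on the nose, cites \cite[Theorem 3.2.5]{Negron-Witherspoon} for the fact that brackets attached to two contracting homotopies differ by a coboundary, and concludes because the differential on $\Hom_{S(V)^e}(K,S(V))=S(V)\ox\Wedge^\bu V^\ast$ vanishes, so ``equal up to coboundary'' means equal. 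That zero-differential observation is exactly the pivot of your second argument, so that route is correct as stated (your appeal to HKR plays the role of the paper's citation of \cite[\S4]{Negron-Witherspoon}). Your primary route --- evaluating $[X,Y]_\phi$ directly from Lemma \ref{lem:circles} at $g=h=e$ and matching against the Leibniz expansion of the Schouten bracket --- is genuinely different from the paper and avoids the external citations, at the cost of carrying the entire content of the lemma in the asserted collapse of the coefficients $\zeta^{l,d,t}_{r,|Y|}$ under the sum over $(r,\sigma)$; that Vandermonde-type identity and the accompanying signs are precisely the hard part and are left unverified, so as written this route is a plan rather than a proof.

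Two cautions on the computational route. First, the coefficient bookkeeping is delicate enough that it should actually be checked in the smallest cases (e.g.\ $d=t=1$, $\bar Y$ scalar) against the definition of $\phi$ before being trusted in general; the normalization of $\zeta^{l,d,t}_{r,m}$ is exactly where an error would hide. Second, your suggested shortcut of ``checking low degrees together with the Leibniz rule that both brackets satisfy'' is not legitimate as an independent argument: the coarse operations $[\ ,\ ]_\phi$ are not known to satisfy the graded Leibniz or Jacobi identities at the chain level (the paper explicitly warns that they need not define dg Lie structures), so the Leibniz rule for $[\ ,\ ]_\phi$ on $S(V)\ox\Wedge^\bu V^\ast$ is a consequence of Lemma \ref{lem908}, not an available ingredient for proving it. Your zero-differential argument does not suffer from this problem and should be taken as the proof.
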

\begin{proof}
By construction, the restriction of $\phi$ to $K\ox_{S(V)}K\subset \tilde{K}\ox_{S(V)\# G}\tilde{K}$ provides a contracting homotopy for $F_K$.  Therefore, for elements in $S(V)\ox{\Wedge}^\bu V^\ast$, we will have $[X,Y]_\phi=[X,Y]_{\phi|K\ox_{S(V)} K}$.  So it does make sense to consider the proposed equality $[X,Y]_\phi=\{X,Y\}$ in $S(V)\ox{\Wedge}^{\bu} V^\ast\cong {\Wedge}^\bu_{S(V)}T$.
\par

It is shown in \cite[\S4]{Negron-Witherspoon} that some choice of contracting homotopy $\psi$ is such that $[X,Y]_\psi$ is the Schouten bracket.  By~\cite[Theorem~3.2.5]{Negron-Witherspoon}, for any two choices of contracting homotopy, the difference in the associated brackets is a coboundary.  Since the differential vanishes on $\Hom_{A^e}(K,S(V))=S(V)\ox{\Wedge}^{\bu} V^\ast$, we see that
$[X,Y]_\phi=[X,Y]_\psi=\{X,Y\}$.
\end{proof}

Under the identification $S(V)\ox{\Wedge}^\bu V^\ast\cong {\Wedge}^\bu_{S(V)}T$, the right action of an element $g\in G$ on $S(V)\ox V^\ast$ is identified with conjugation by the corresponding automorphism, $X\cdot g=X^g=g^{-1}Xg$.  On higher degree elements the action is given by the standard formula, $(X_1\cdots X_l)\cdot g=(X_1\cdots X_l)^g=X_1^g\cdots X_l^g$.  Under the identification 
$$
\bigoplus_{g\in G} (S(V)\ox{\Wedge}^{\bu} V^\ast g)\cong \bigoplus_{g\in G} ({\Wedge}^{\bu}_{S(V)} T g) , 
$$
the action is given by $(Xg)\cdot h=X^h{h^{-1}gh}$, where we can view $X$ either as an element in $S(V)\ox V^\ast$ or as a polyvector field.

\begin{lemma}
For any $X,Y\in S(V)\ox{\Wedge}^{\bu} V^\ast$, and $g\in G$, we have
$\{X,Y\}^g=\{X^g,Y^g\}$.
\end{lemma}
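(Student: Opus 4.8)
The plan is to deduce the $G$-equivariance of the Schouten bracket from the $G$-invariance of the contracting homotopy $\phi$ of Definition~\ref{def:phi}, together with Lemma~\ref{lem908}. The key observation is that the $\phi$-bracket on the \emph{entire} complex $S(V)\ox\Wedge^\bu V^\ast = \Hom_{S(V)^e}(K,S(V))$, not merely on cohomology, is already $G$-equivariant, and on this complex it coincides literally with the Schouten bracket.

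First I would record that the right $G$-action on $\Hom_{S(V)^e}(K,S(V))$, namely $(f\cdot h)(x) = {}^{h^{-1}}\big(f({}^h x)\big)$ (the ungraded specialization of (\ref{eq.actn})), is an action by automorphisms of the complex, and that it matches $X\cdot h = X^h = h^{-1}Xh$ under the identification $S(V)\ox\Wedge^\bu V^\ast \cong \Wedge^\bu_{S(V)}T$. Then I would verify that the $\phi$-circle product (\ref{eqn:phi-circle-def}) is $G$-equivariant: for $f,g\in\Hom_{S(V)^e}(K,S(V))$ and $h\in G$,
\[
(f\circ_\phi g)\cdot h = (f\cdot h)\circ_\phi (g\cdot h).
\]
This is a direct unwinding of the definition. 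The diagonal $\Delta_K$ of (\ref{cmltp}) is $G$-linear, so ${}^h(w_1\ox w_2\ox w_3) = {}^hw_1\ox {}^hw_2\ox {}^hw_3$; the map $\phi$ is $G$-linear by the remark following Definition~\ref{def:phi}; evaluation of functions intertwines the two actions; and the sign $(-1)^{|w_1||g|}$ is unaffected since ${}^hw_1$ has the same homological degree as $w_1$. Hence the $\phi$-bracket (\ref{eqn:phi-bracket-def}) satisfies $[f,g]_\phi\cdot h = [f\cdot h,g\cdot h]_\phi$ for all $f,g$, in particular for $f = X$, $g = Y$ in $S(V)\ox\Wedge^\bu V^\ast$. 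Combined with Lemma~\ref{lem908}, which identifies $[X,Y]_\phi$ with $\{X,Y\}$, this gives
\[
\{X,Y\}^g = [X,Y]_\phi\cdot g = [X^g,Y^g]_\phi = \{X^g,Y^g\},
\]
which is the assertion.

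The only real work is bookkeeping of conventions — the left action on the algebra versus the right action on derivations and on functions, and keeping the $\Hom$-action compatible with the automorphism $X\mapsto X^g = g^{-1}Xg$ of $\Wedge^\bu_{S(V)}T$ — since nothing deep is involved: $g$ acts by automorphisms of every canonically defined structure in sight. As a cross-check, and as an alternative proof for a reader who prefers not to route through $\phi$, one can argue intrinsically: the Schouten bracket is the unique biderivation extending the commutator on $T = \mathrm{Der}(S(V))$ and the module action $T\ox S(V)\to S(V)$; each $g\in G$ acts on $\Wedge^\bu_{S(V)}T$ by an algebra automorphism for the wedge product; and equivariance is immediate on generators, since $\{D_1,D_2\}^g = [D_1,D_2]^g = [D_1^g,D_2^g] = \{D_1^g,D_2^g\}$ because conjugation by an algebra automorphism preserves commutators, $\{D,f\}^g = (Df)^g = D^g(f^g) = \{D^g,f^g\}$, and $\{f,h\} = 0 = \{f^g,h^g\}$ on functions. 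Equivariance then propagates to all wedge-degrees via the graded Leibniz rule and induction on total degree.
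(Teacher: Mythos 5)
Your proposal is correct, and your primary route is genuinely different from the one the paper takes. The paper's proof is exactly your ``cross-check'': identify $S(V)\ox\Wedge^{\bu}V^\ast$ with polyvector fields, observe that the $G$-action is conjugation by algebra automorphisms, note that the Schouten bracket restricts to the commutator of vector fields on $T$, and propagate equivariance to higher wedge degrees via the Gerstenhaber/Leibniz identity $\{X,Y_1Y_2\}=\{X,Y_1\}Y_2\pm Y_1\{X,Y_2\}$. Your main argument instead establishes equivariance of the $\phi$-circle product at the chain level, using the $G$-linearity of $\Delta_K$ and of $\phi$ (indeed $\mathrm{GL}(V)$-invariance, as noted after Definition~\ref{def:phi}), and then invokes Lemma~\ref{lem908}; this is legitimate because that lemma gives $[X,Y]_\phi=\{X,Y\}$ as an \emph{equality of elements} (the differential on $\Hom_{S(V)^e}(K,S(V))$ vanishes identically, so there is no coboundary ambiguity). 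What your route buys is a statement that sits naturally inside the machinery already set up for the skew group algebra --- the chain-level identity $(f\circ_\phi g)\cdot h=(f\cdot h)\circ_\phi(g\cdot h)$ holds for arbitrary cochains and is reused implicitly elsewhere --- at the cost of leaning on Lemma~\ref{lem908}, which itself rests on the comparison of contracting homotopies from \cite{Negron-Witherspoon}. The paper's intrinsic argument is shorter and independent of $\phi$ altogether. Both are sound; the only bookkeeping worth double-checking in your version is that the right action on $\Hom_{S(V)^e}(K,S(V))$ obtained by specializing (\ref{eq.actn}) to the identity component really does match $X\mapsto X^h=h^{-1}Xh$ on $\Wedge^{\bu}_{S(V)}T$, which you correctly flag and which does hold.
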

\begin{proof}
We identify $S(V)\ox{\Wedge}^{\bu} V^\ast$ with the global polyvector fields ${\Wedge}^{\bu}_{S(V)}T$.  Then the lemma follows from the fact that the $G$-action is simply given by conjugating by the corresponding automorphism, the fact that the Schouten bracket is given by composition of vector fields on $T$, and the Gerstenhaber identity $\{X, Y_1Y_2\}=\{X,Y_1\}Y_2\pm Y_1\{X,Y_2\}$.
\end{proof}

\begin{lemma}
For any $G$-invariant elements $\sum_g X_gg$ and $\sum_{h\in G} Y_hh$ in the sum $\bigoplus_{g\in G} (S(V)\ox{\Wedge}^{\bu} V^\ast g)$, the element $\sum_{g,h\in G}\{X_g,Y_h\}gh$ is also $G$-invariant.  Furthermore, for any such $\sum_g X_gg$ and $\sum_h Y_hh$, the element $\sum_{g,h\in G}p_{gh}\{X_g,Y_h\}gh$ will be a $G$-invariant cocycle.
\end{lemma}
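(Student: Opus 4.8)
The plan is to verify the two assertions in turn, both by direct bookkeeping with the $G$-action. First, recall from the action formula $(Zg)\cdot h = Z^h\, h^{-1}gh$ recorded just above that $G$-invariance of $\sum_{g} X_g g$ is equivalent to the relations $X_g^k = X_{k^{-1}gk}$ for all $g,k\in G$, and likewise $G$-invariance of $\sum_h Y_h h$ amounts to $Y_h^k = Y_{k^{-1}hk}$. Then for any $k\in G$,
\begin{align*}
\Bigl(\textstyle\sum_{g,h\in G}\{X_g,Y_h\}\,gh\Bigr)\cdot k
&= \sum_{g,h\in G}\{X_g,Y_h\}^k\,(k^{-1}ghk)\\
&= \sum_{g,h\in G}\{X_g^k,Y_h^k\}\,(k^{-1}gk)(k^{-1}hk),
\end{align*}
where the second equality uses the equivariance $\{X,Y\}^k=\{X^k,Y^k\}$ of the Schouten bracket proved above. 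Substituting $X_g^k=X_{k^{-1}gk}$ and $Y_h^k=Y_{k^{-1}hk}$ and then reindexing by $g\mapsto k^{-1}gk$, $h\mapsto k^{-1}hk$ recovers $\sum_{g,h}\{X_g,Y_h\}\,gh$, so this element is $G$-invariant.

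For the second assertion I would regroup by the product: setting $W_a := \sum_{gh=a}\{X_g,Y_h\}$ for $a\in G$, we have $\sum_{g,h}\{X_g,Y_h\}\,gh=\sum_{a\in G}W_a a$ and hence $\sum_{g,h}p_{gh}\{X_g,Y_h\}\,gh=\sum_{a\in G}p_a(W_a a)$. Since $\sum_a W_a a$ is $G$-invariant by the first part, Lemma~\ref{prop_pginv}(3) gives immediately that $\sum_a p_a(W_a a)$ is $G$-invariant. Finally, each summand $p_a(W_a a)$ lies in $S(V^a)\ox\Wedge^{\bu-\codim V^a}(V^a)^\ast\det_a^\perp$, which by the first statement of Lemma~\ref{lem.654} is annihilated by $E_a$; hence each $p_a(W_a a)$ is a cocycle in $S(V)\ox\Wedge^\bu V^\ast a$, and so is the sum. (Equivalently, each $p_a$ is a chain map whose target carries the zero differential.) Restricting to $G$-invariants then exhibits $\sum_{g,h}p_{gh}\{X_g,Y_h\}\,gh$ as a genuine cocycle on $\widetilde{K}$ computing a class in $\HH^\bu(S(V)\#G)$, which is the form in which it will be used in the proof of Theorem~\ref{thm:gh}.

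No genuine obstacle arises here: every substantive input — the $G$-equivariance of the Schouten bracket, the compatibility of the projections $p_g$ with the $G$-action (Lemma~\ref{prop_pginv}), and the cocycle property of the fixed-point subspaces (Lemma~\ref{lem.654}) — has already been established, so the whole argument is a short reindexing exercise. The only place that requires care is tracking the conjugations $k^{-1}(\,\cdot\,)k$ consistently through the first computation so that the reindexing closes up.
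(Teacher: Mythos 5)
Your proof is correct and follows essentially the same route as the paper: invariance of $\sum_{g,h}\{X_g,Y_h\}gh$ via the componentwise criterion $X_g^k=X_{k^{-1}gk}$, the equivariance of the Schouten bracket, and a reindexing, followed by an appeal to Lemma~\ref{prop_pginv}(3) for the projected sum. Your explicit justification of the cocycle property via Lemma~\ref{lem.654} (the image of each $p_a$ consists of cocycles) is a point the paper leaves implicit, but it is the intended argument.
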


\begin{proof}
We know an element $\sum_g Z_g g$ will be invariant if and only if, for each $g,\sigma\in G$, $Z_g^\sigma=Z_{\sigma^{-1}g\sigma}$.  So the $X_g$ and $Y_h$ have this property, and it follows that the sum $\sum_{\{g,h\in G: gh=\tau\}}\{X_g,Y_h\}$ will have this property for each $\tau\in G$ since
$$
\sum_{gh=\tau}\{X_g,Y_h\}^\sigma=\sum_{gh=\tau}\{X^\sigma_g,Y^\sigma_h\}=\sum_{gh=\tau}\{X_{\sigma^{-1}g\sigma},Y_{\sigma^{-1}h\sigma}\}=\sum_{\{g',h':g'h'=\sigma\tau\sigma^{-1}\}}\{X_{g'},Y_{h'}\}.
$$
The last statement 
now follows directly from Lemma \ref{prop_pginv}(3).
\end{proof}

\subsection{$\phi$-brackets as Schouten brackets}

Before we begin, it will be useful to have the following two lemmas.  Recall that $\mathfrak{I}_g$ is the ideal generated by $(1-g)V$ in $S(V)$.

\begin{lemma}\label{lem:Ig}
For any vectors $u_{i_1},\dots, u_{i_\nu}\in V$ and $g\in G$, the element ${^{(1-g)}(u_{i_1}\dots u_{i_\nu})}$ is in $\mathfrak{I}_g$.
\end{lemma}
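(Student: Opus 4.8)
The plan is to reduce to the case $\nu=1$ and then use a telescoping (difference-of-products) identity together with the fact that $\mathfrak{I}_g$ is an ideal. First I would record the base case: for a single vector $u \in V$ we have ${}^{(1-g)}u = u - {}^g u = (1-g)u$, which lies in $(1-g)V$ and hence in $\mathfrak{I}_g$ by definition. The notation ${}^{(1-g)}x$ means ${}^{(1-g)}x = x - {}^g x$, extended multiplicatively only in the sense that ${}^g$ is an algebra automorphism; it is \emph{not} a derivation, so one cannot just apply a Leibniz rule. Instead I would write the product $u_{i_1}\cdots u_{i_\nu} - {}^g(u_{i_1}\cdots u_{i_\nu}) = u_{i_1}\cdots u_{i_\nu} - ({}^g u_{i_1})\cdots({}^g u_{i_\nu})$ as a telescoping sum
\[
\sum_{k=1}^{\nu} ({}^g u_{i_1})\cdots({}^g u_{i_{k-1}})\,\bigl(u_{i_k} - {}^g u_{i_k}\bigr)\,u_{i_{k+1}}\cdots u_{i_\nu}.
\]
Each summand contains the factor $u_{i_k} - {}^g u_{i_k} = (1-g)u_{i_k} \in (1-g)V$, so each summand lies in $\mathfrak{I}_g$, and therefore so does the sum, since $\mathfrak{I}_g$ is a (two-sided, but here $S(V)$ is commutative) ideal.

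For the general statement with several monomials $u_{i_1}\cdots u_{i_\nu}$ replaced by an arbitrary product, the same telescoping argument applies verbatim; alternatively one can note that ${}^{(1-g)}(-)$ is additive, so it suffices to treat a single monomial, which is exactly the computation above. I would also remark that the identity $\mathfrak{I}_g = \mathfrak{I}_{g^{-1}}$ (from the equality $(1-g)V = (1-g^{-1})V$ in Lemma~\ref{lem723}) is consistent with this, though not needed for the proof.

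I do not anticipate a genuine obstacle here: the only subtlety is resisting the temptation to treat ${}^{(1-g)}(-)$ as a derivation, and the telescoping identity is the standard fix. The whole argument is a two-line computation once the base case and the additivity of ${}^{(1-g)}(-)$ are noted.
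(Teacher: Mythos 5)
Your proof is correct and is essentially the paper's argument: the paper proves the same identity by induction on $\nu$, peeling off one factor at a time, and your telescoping sum is just the unrolled, closed form of that induction. Both reduce to the observation that each term contains a factor $u_{i_k}-{}^{g}u_{i_k}\in(1-g)V$ and that $\mathfrak{I}_g$ is an ideal.
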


\begin{proof}
We proceed by induction on the number of vectors $\nu$.  When $\nu=1$ the result is immediate.  For $\nu>1$ we have
$$
\ba{rl}
{^{(1-g)}(u_{i_1}\dots u_{i_\nu})}&=u_{i_1}\dots u_{i_\nu}-{^gu_{i_1}}\dots {^gu_{i_\nu}}\\
&=(u_{i_1}\dots u_{i_{\nu-1}}-{^gu_{i_1}}\dots {^gu_{i_{\nu-1}}})u_{i_\nu}+{^gu_{i_1}}\dots {^gu_{i_{\nu-1}}}{^{(1-g)}u_{i_\nu}},
\ea
$$
which is now in $\mathfrak{I}_g$ by induction.
\end{proof}

\begin{lemma}\label{lem954}
Suppose $c$ and $c'$ are invariant cocycles in $\bigoplus_g S(V)\ox{\Wedge}^{\bu}V^\ast g$ that differ by a (possibly non-invariant) coboundary.  Then $c$ and $c'$ differ by an invariant coboundary.
\end{lemma}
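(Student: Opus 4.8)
The plan is to use the averaging operator over the cyclic (or full) group together with the fact that the Hochschild differential is $G$-equivariant. Write $c - c' = d(b)$ for some element $b \in \bigoplus_g S(V)\ox{\Wedge}^{\bu}V^\ast g$, where $d$ is the differential (left multiplication by $E = \mathrm{diag}\{E_g\}$). The natural candidate for an invariant coboundary is $d(\bar{b})$, where $\bar{b} = \frac{1}{|G|}\sum_{\sigma \in G} b\cdot\sigma$ is the average of $b$ over $G$. First I would record that $\bar{b}$ is $G$-invariant by construction, so $d(\bar{b})$ is an invariant coboundary.

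Next I would verify that $d(\bar{b}) = c - c'$. Since the differential on $\bigoplus_g S(V)\ox{\Wedge}^{\bu}V^\ast g$ comes from a map of $G$-complexes — explicitly, $E$ commutes with the $G$-action by the remark following the definition of $E_g$ (each $E_g$ is the image of $1g$ under the differential, and the $G$-action permutes these canonically) — we have $d(b\cdot\sigma) = (d b)\cdot\sigma = (c - c')\cdot\sigma$ for every $\sigma$. Because $c$ and $c'$ are themselves $G$-invariant, $(c-c')\cdot\sigma = c - c'$ for all $\sigma$, so averaging gives
\[
d(\bar{b}) = \frac{1}{|G|}\sum_{\sigma\in G} d(b\cdot\sigma) = \frac{1}{|G|}\sum_{\sigma\in G}(c-c') = c - c'.
\]
Thus $c$ and $c'$ differ by the invariant coboundary $d(\bar{b})$, which completes the proof.

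The argument is short, and the only point requiring a little care — what I'd flag as the main (minor) obstacle — is confirming that the differential really is $G$-equivariant in the relevant sense, i.e. that $d(b\cdot\sigma) = (db)\cdot\sigma$ under the action \eqref{eq.actn}. This is exactly what was observed right after the display defining $E_g$: each $E_g$ is basis-independent because it is the image of $1g \in \Hom_k(\Wedge^0 V, S(V)g)$ under the Hochschild differential, and that differential is $G$-equivariant since it is induced from a map of $G$-complexes $\Hom_{S(V)^e}(K, S(V)\# G)$. Alternatively one can check directly from the formulas $E_g = \sum_i (x_i - {}^g x_i)\partial_i$ and the action \eqref{eq.actn} that $h$ intertwines $E_g$ and $E_{h^{-1}gh}$. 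Either way the equivariance needed is already in hand, and the averaging argument then goes through verbatim since $|G|$ is invertible in $k$.
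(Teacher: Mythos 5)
Your proof is correct and is essentially the paper's own argument: average the bounding element $b$ over $G$ (i.e.\ take $b\cdot\int_G$ with $\int_G=\frac{1}{|G|}\sum_{\sigma\in G}\sigma$) and use $G$-equivariance of the differential together with invariance of $c-c'$ to conclude $d(b\cdot\int_G)=c-c'$. Your extra remarks verifying that $d=E\cdot{-}$ commutes with the $G$-action just make explicit a step the paper leaves implicit.
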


\begin{proof}
Note that $c-c'$ is also invariant.  Let $b$ be any element with $d(b)=c-c'$, where $d$ is the differential $d=E\cdot-$.  Then we have
$$
c-c'=d(b)\cdot\int_G=d(b\cdot\int_G),
$$
where $\int_G$ is the standard integral $\frac{1}{|G|}\sum_{g\in G}g$.  So $b\cdot\int_G$ provides the desired invariant bounding element.
\end{proof}

We can now give a general formula for the Gerstenhaber bracket on
$\HH^{\bu}(S(V)\# G)$  in terms of Schouten brackets.
One may compare with~\cite[Theorem~4.4, Corollary~4.11]{Halbout-Tang10} where the authors
give similar formulas under some conditions on the group $G$ and its action on $V$.

\begin{theorem}\label{thm:gh}
Let $X=\sum_g X_g g$ and $Y=\sum_h Y_h h$ be $G$-invariant cocycles in
$\oplus_{g\in G} S(V^g)\ot \Wedge^{\bu -\codim V^g} (V^g)^* \det^{\perp}_g$.
The sum $\sum_{g,h\in G} p_{gh} \{X_g,Y_h\} gh$ is a $G$-invariant
cocycle and,
considered as elements of  the cohomology $\HH^\bu(S(V)\# G)$,
$$ 
  [X,Y] = 
 \sum_{g,h\in G} p_{gh} \{ X_g, Y_h\} gh.
$$
\end{theorem}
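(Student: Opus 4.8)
The plan is to compute the Gerstenhaber bracket using the $\phi$-bracket from Theorem~\ref{thm:NW}, applied to the resolution $\widetilde{K}$ with contracting homotopy $\phi_{\widetilde K}$, and then show that the resulting cohomology class is represented by $\sum_{g,h} p_{gh}\{X_g,Y_h\}gh$. Since $X,Y$ are $G$-invariant cocycles lying in the distinguished subspaces $S(V^g)\ot\Wedge^{\bu-\codim V^g}(V^g)^*\det^\perp_g$, I first reduce to computing $\phi$-circle products $X_gg\circ_\phi Y_hh$ for each pair $(g,h)$. To apply Lemma~\ref{lem:circles}, I write $Y_h = v_1\cdots v_t\,\bar Y$ with $\bar Y \in \Wedge^p V^*\omega_h$ a scalar multiple of a generator (using that $\det^\perp_h = \Wedge^{\codim V^h}((1-h)V)^*h$), and expand $X_g$ as a sum of terms $X_1\cdots X_d$ with each $X_i\in S(V)\ot V^*$; linearity then lets me invoke the formula of Lemma~\ref{lem:circles} termwise. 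Doing the same for $Y_hh\circ_\phi X_gg$ and assembling the graded commutator gives an explicit (if messy) expression for $[X_gg,Y_hh]_\phi$.

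The key step is to recognize that, \emph{after applying $p_{gh}$}, this explicit expression collapses to $p_{gh}\{X_g,Y_h\}gh$. Here is where Lemmas~\ref{lem:Ig} and~\ref{prop789} do the work: the formula from Lemma~\ref{lem:circles} contains terms like ${}^g v_{\sigma(r+1)}\cdots {}^g v_{\sigma(t)}$, and the discrepancy between these and the untwisted monomials $v_{\sigma(r+1)}\cdots v_{\sigma(t)}$ lies in $\mathfrak{I}_{gh}$ by Lemma~\ref{lem:Ig} (applied with group element $gh$, after noting ${}^g(-)$ vs.\ the identity differ by something in the relevant ideal once one also uses that $Y_h$ is supported on $V^h$), hence is killed by $p_{gh}$ since $\ker p_{gh}\supseteq \mathfrak{I}_{gh}\ot\Wedge^\bu V^*$ by Lemma~\ref{prop789}. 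Similarly, any wedge term in which $\omega_{gh}$ fails to divide the $\partial_I$-part is annihilated by $p_{gh}$, again by Lemma~\ref{prop789}. What survives is exactly the classical Hochschild–Kostant–Rosenberg circle product computation for the polynomial ring $S(V)$, whose graded commutator is the Schouten bracket; this is the content of Lemma~\ref{lem908}, now carried out with the extra bookkeeping of the group element $gh$ and the twisting. Comparing coefficients $\zeta^{l,d,t}_{r,m}$ against those appearing in the Schouten bracket formula, I expect the combinatorial identities to match precisely.

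Having shown $p_{gh}[X_gg,Y_hh]_\phi = p_{gh}\{X_g,Y_h\}gh$ for each pair, I sum over $g,h$. By the last lemma of Section~\ref{sec:brackets} (the $G$-invariance lemma), $\sum_{g,h}p_{gh}\{X_g,Y_h\}gh$ is a $G$-invariant cocycle, establishing the first assertion. For the identification with $[X,Y]$: by Theorem~\ref{thm:NW}(3), $[X,Y]_\phi$ represents the Gerstenhaber bracket on $\HH^\bu(S(V)\#G)$; applying the $G$-linear quasi-isomorphism $p=\mathrm{diag}\{p_g\}$ of Lemma~\ref{prop_pginv}(2), the class $[X,Y]_\phi$ maps to $p([X,Y]_\phi)=\sum_{g,h}p_{gh}[X_gg,Y_hh]_\phi = \sum_{g,h}p_{gh}\{X_g,Y_h\}gh$. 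Since $p$ is a quasi-isomorphism, the latter represents $[X,Y]$ in cohomology. One technical point: $[X,Y]_\phi$ is \emph{a priori} only a cocycle, not necessarily invariant, so I invoke Lemma~\ref{lem954} to replace it by an invariant representative differing by an invariant coboundary; since $p_{gh}$ kills coboundaries (being a chain map to a complex with zero differential, combined with Lemma~\ref{lem.654}), this does not affect the right-hand side.

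\textbf{Main obstacle.} The hard part is the explicit verification that the post-$p_{gh}$ truncation of the Lemma~\ref{lem:circles} expression matches the Schouten bracket formula term-by-term — that is, checking that the rational coefficients $\zeta^{l,d,t}_{r,m}$ collapse, after the antisymmetrization in the graded commutator, to the $\pm 1$ coefficients of the classical Schouten bracket. This requires carefully tracking signs coming from the reordering of wedge factors $o(w_1,\dots,w_z,v_{\sigma(r)},u_1,\dots,u_s)$ and from the Koszul sign rule, together with the combinatorial cancellation of the factorials. I anticipate splitting into cases according to whether the contracted $v_{\sigma(r)}$ lands in $V^{gh}$ or in $(1-gh)V$, and handling the ``internal'' derivations $X_l(v_{\sigma(r)})$ separately from the ``passive'' factors.
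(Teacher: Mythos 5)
Your overall strategy matches the paper's: compute $\phi$-circle products via Lemma~\ref{lem:circles}, use Lemmas~\ref{lem:Ig} and~\ref{prop789} to show $p_{gh}$ kills the discrepancies coming from the group twist, reduce to the untwisted identity $[X_g,Y_h]_\phi=\{X_g,Y_h\}$ of Lemma~\ref{lem908}, and finish with Lemma~\ref{lem954} and the quasi-isomorphism $p$. But the central claim as you state it --- $p_{gh}[X_gg,Y_hh]_\phi=p_{gh}\{X_g,Y_h\}gh$ ``for each pair'' --- is false when $gh\neq hg$: the two halves of the graded commutator $[X_gg,Y_hh]_\phi$ live in different group components, $X_gg\circ_\phi Y_hh$ in $S(V)\ot\Wedge^\bu V^\ast gh$ and $Y_hh\circ_\phi X_gg$ in $S(V)\ot\Wedge^\bu V^\ast hg$, so applying $p_{gh}$ to the commutator only sees the first half, and no per-pair identity of this shape can hold. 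The argument must instead work with the full double sum: after applying $p=\mathrm{diag}\{p_g\}$ to $[X,Y]_\phi$, one reindexes the second sum by the bijection $(g,h)\mapsto(g,ghg^{-1})$ so that the term previously projected by $p_{hg}$ is paired into the $gh$-component, which forces the appearance of $Y_{ghg^{-1}}$ rather than $Y_h$. Here the global $G$-invariance of $Y$ enters essentially: $Y_{ghg^{-1}}=Y_h^{g^{-1}}$, and one then needs a further computation (using $\omega_{g^{-1}}\mid\bar X$ and $Y_i^{g^{-1}}(u_j)=Y_i(u_j)$ for $u_j\in V^g$, again through Lemma~\ref{lem:circles} and the kernel description of $p_{gh}$) to see that the $g^{-1}$-twist on $Y_h$ is invisible after projection. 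Your proposal never invokes the invariance of $Y$ in this step, and without it the statement is simply wrong --- compare Example~\ref{ex:one}, where non-invariant cochains produce a nonzero chain-level bracket that the theorem would forbid.

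A second, smaller point: the ``main obstacle'' you anticipate --- matching the coefficients $\zeta^{l,d,t}_{r,m}$ against those of the Schouten bracket --- is not actually an obstacle and should not be attempted. Once you have shown $p_{gh}(X_gg\circ_\phi Y_hh)=p_{gh}(X_g\circ_\phi Y_h)gh$ (and the analogous identity for the reindexed second term), the group elements have been stripped off and the remaining commutator is literally $[X_g,Y_h]_\phi$ computed inside $S(V)\ot\Wedge^\bu V^\ast$; Lemma~\ref{lem908} identifies this with $\{X_g,Y_h\}$ by an abstract argument (any two contracting homotopies yield brackets differing by a coboundary, and the differential vanishes on $\Hom_{A^e}(K,S(V))$), so no term-by-term coefficient comparison is required.
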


\begin{proof}
By Lemma~\ref{lem954}, it suffices to show that the equality holds up to arbitrary coboundaries.  Note that 
$$
p[X,Y]_\phi=\sum_{g,h} p_{gh}(X_gg\circ_\phi Y_hh)-(-1)^{(|Y|-1)(|X|-1)}\sum_{g,h} p_{hg}(Y_hh\circ_\phi X_gg).
$$
By considering the group automorphism
$$
G\times G\to G\times G,\ \ (g,h)\mapsto (g,ghg^{-1}),
$$
and the equality $ghg^{-1}g=gh$, we see that we can reindex the second sum to obtain 
\begin{equation}\label{eq:1219}
\ba{rl}
p[X,Y]_\phi&=\sum_{g,h} p_{gh}(X_gg\circ_\phi Y_hh)\mp\sum_{g,h} p_{gh}(Y_{ghg^{-1}}ghg^{-1}\circ_\phi X_gg)\\
&=\sum_{g,h} \Big(p_{gh}(X_gg\circ_\phi Y_hh)\mp p_{gh}(Y_{ghg^{-1}}ghg^{-1}\circ_\phi X_gg)\Big).
\ea
\end{equation}
We claim that there is an equality
\begin{equation}\label{eq:1226}
p_{gh}(X_gg\circ_\phi Y_hh)\mp p_{gh}(Y_{ghg^{-1}}ghg^{-1}\circ_\phi X_gg)=p_{gh}[X_g,Y_h]_\phi gh=p_{gh}\{X_g,Y_h\}gh
\end{equation}
for each pair $g,h\in G$, where the second equality follows already by Lemma \ref{lem908}.  If we can establish (\ref{eq:1226}) then we are done, by the final expression in (\ref{eq:1219}) and the fact that the difference $[X,Y]_\phi-p[X,Y]_\phi$ is a coboundary.
\par

Let us fix elements $g,h\in G$.  Write $X_g$ as a sum of elements of the form $u_1\cdots u_s \bar{X}$ with the $u_j\in V^g$, and $Y_h$ as a sum of elements $v_1\cdots v_t \bar{Y}$ with the $v_i\in V^h$.  Here $\bar{X},\bar{Y}\in \Wedge^\bullet V^\ast$.  By $h$-invariance there is an equality ${}^gv_i={}^{gh}v_i$ for each $i$.  For arbitrary $Z$ in $S(V)\ot \Wedge^{\bu} V^\ast$ and elements $a_i\in S(V)$, the projection  $p_{gh}({^{(1-gh)}(a_1\dots a_q)}Zgh)$ vanishes by Lemma~\ref{prop789} and Lemma~\ref{lem:Ig}.  So for any $r<t$ and $\sigma\in S_n$ there will be equalities
\begin{equation}\label{eq:1198}
\ba{rl}
p_{gh}\big({^g(v_{\sigma(r+1)}\dots v_{\sigma(t)})}Zgh\big)&=p_{gh}\big({^{gh}(v_{\sigma(r+1)}\dots v_{\sigma(t)})}Zgh\big)\\
&=p_{gh}(v_{\sigma(r+1)}\dots v_{\sigma(t)}Zgh).
\ea
\end{equation}
It now follows from the expression for the circle operation given in Lemma \ref{lem:circles} that there is an equality
\begin{equation}\label{eq:1237}
p_{gh}(X_gg\circ_\phi Y_hh)=p_{gh}(X_g\circ_\phi Y_h) gh.
\end{equation}
This covers half of what we need.
\par

We would like to show now
\[
p_{gh}(Y_{ghg^{-1}}ghg^{-1}\circ_\phi X_gg)=p_{gh}(Y_h\circ_\phi X_g) .
\]
Simply replacing $g$ and $h$ with $ghg^{-1}$ and $g$ in (\ref{eq:1237}), as well as $X_g$ with $Y_{ghg^{-1}}$ and $Y_h$ with $X_g$, gives
\[
p_{gh}(Y_{ghg^{-1}}ghg^{-1}\circ_\phi X_gg)=p_{gh}(Y_{ghg^{-1}}\circ_\phi X_g) gh.
\]
Now $G$-invariance of $Y$ implies immediately $Y_{ghg^{-1}}=Y^{g^{-1}}_h$.  So we have
\[
p_{gh}(Y_{ghg^{-1}}ghg^{-1}\circ_\phi X_gg)=p_{gh}(Y^{g^{-1}}_h\circ_\phi X_g) gh ,
\]
whence we need to show $p_{gh}(Y^{g^{-1}}_h\circ_\phi X_g) gh=p_{gh}(Y_h\circ_\phi X_g) gh$.
\par

Recall our expressions for $Y_h$ and $X_g$ from above, in terms of the $v_i$, $u_j$, $\bar{Y}$ and $\bar{X}$.  In the notation of Lemma~\ref{prop789}, we may assume that $\omega_g|\bar{X}$, and hence that $\omega_{g^{-1}}|\bar{X}$ by Lemma~\ref{lem723}.  We write $\bar{Y}$ as a sum of monomials $Y_1\cdots Y_e$ for functions $Y_i\in V^\ast$.  For each $Y_i$ and $u_j$ we have $Y_i^{g^{-1}}(u_j)=Y_i({^{g^{-1}}u_j})=Y_i(u_j)$.  We also have $Y_i^{g^{-1}}\bar{X}=Y_i\bar{X}$ since $\omega_{g^{-1}}|\bar{X}$.  From these observations and the expression of Lemma~\ref{lem:circles}, since $Y_h$ is a sum of elements of the form $v_1\cdots v_t \overline{Y}$, we deduce an equality
\[
Y_h^{g^{-1}}\circ_\phi X_g=\sum {^g(v_1\dots v_t)}(\bar{Y}^{g^{-1}})\circ_\phi X_g=\sum {^g(v_1\dots v_t)}\bar{Y}\circ_\phi X_g.
\]
Finally, by the same argument given for the equality (\ref{eq:1198}) we find also that
\[
\sum p_{gh}\big({^g(v_1\dots v_t)}\bar{Y}\circ_\phi X_g\big)=\sum p_{gh}\big((v_1\dots v_t)\bar{Y}\circ_\phi X_g\big)=p_{gh}(Y_h\circ_\phi X_g).
\]
Taking these two sequences of equalities together gives the desired equality
\[
p_{gh}(Y_h^{g^{-1}}\circ_\phi X_g)=p_{gh}(Y_h\circ_\phi X_g),
\]
establishes (\ref{eq:1226}), and completes the proof.
\end{proof}

\subsection{Corollaries: The codimension grading and some general vanishing results}

We apply the formula in Theorem \ref{thm:gh} to analyze distinct cases.  In one case, we  consider brackets with an element $X$ supported on group elements that act trivially on $V$.  In another case, we  consider brackets of $X$ and $Y$ supported on elements that act nontrivially.  We will have in this second case some general vanishing results.  The following  observation
helps in organizing these cases. 

\begin{observation}\label{obs:observer}
The graded $G$-module
\begin{equation}\label{eq:1248}
\bigoplus_{g\in G}S(V^g)\ox{\Wedge}^{{\bu}-\codim V^g} (V^g)^\ast {\det}^\perp_g
\end{equation}
decomposes as a direct sum of graded $G$-subspaces $\mathscr{D}(i)$, for $0\leq i\leq \dim(V)$, as does its $G$-invariant subspace,
\begin{equation}\label{eq:codimdecomp}
\left(\bigoplus_{g\in G}S(V^g)\ox{\Wedge}^{{\bu}-\codim V^g} (V^g)^\ast {\det}^\perp_g\right)^G= \ \bigoplus_{0\leq i\leq \dim(V)} \mathscr{D}(i)^G.
\end{equation}
The subspace $\mathscr{D}(i)$ consists of all sums of elements supported on group elements $g$ for which $\mathrm{codim}V^g=i$.  We call the decomposition (\ref{eq:codimdecomp}) the codimension grading for $\HH^\bu(S(V)\# G)$.
\end{observation}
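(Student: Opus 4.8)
The plan is to obtain the decomposition by regrouping the index set $G$ of the direct sum (\ref{eq:1248}) according to the value of $\codim V^g$, and then to check that this regrouping is compatible both with the $G$-action and with the passage to invariants.

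First I would set, for each $0\le i\le \dim V$,
\[
\mathscr{D}(i)=\bigoplus_{\substack{g\in G\\ \codim V^g=i}} S(V^g)\ox{\Wedge}^{{\bu}-\codim V^g}(V^g)^\ast{\det}_g^\perp .
\]
Since $G$ is the disjoint union of the sets $\{g\in G:\codim V^g=i\}$ for $0\le i\le\dim V$, the module (\ref{eq:1248}) is literally the direct sum $\bigoplus_i\mathscr{D}(i)$; and because each summand indexed by a single $g$ is a graded subspace with respect to cohomological degree, so is each $\mathscr{D}(i)$.

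Second, I would verify that each $\mathscr{D}(i)$ is a $G$-submodule. By Proposition~\ref{prop.Gsubcmplx}(1), the right action of $h\in G$ carries the $g$-summand of (\ref{eq:1248}) isomorphically onto its $h^{-1}gh$-summand. By Lemma~\ref{lem723} we have $V^{h^{-1}gh}=h^{-1}\cdot V^g$, hence $\dim V^{h^{-1}gh}=\dim V^g$ and therefore $\codim V^{h^{-1}gh}=\codim V^g$. Thus conjugation by $h$ permutes the set $\{g\in G:\codim V^g=i\}$, so $\mathscr{D}(i)\cdot h=\mathscr{D}(i)$; since the $G$-action on (\ref{eq:1248}) preserves cohomological degree (it is induced from degree-preserving actions on complexes, cf.\ (\ref{eq.actn})), each $\mathscr{D}(i)$ is a graded $G$-submodule. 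This exhibits (\ref{eq:1248}) as a direct sum of the graded $G$-submodules $\mathscr{D}(i)$.

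Finally, taking $G$-invariants is additive, i.e.\ $\big(\bigoplus_i M_i\big)^G=\bigoplus_i M_i^G$, so applying $(-)^G$ to the decomposition of (\ref{eq:1248}) yields (\ref{eq:codimdecomp}); under the identifications (\ref{eqn:G-invts}) and (\ref{eqn:ds}) this is the asserted codimension grading on $\HH^\bu(S(V)\# G)$. There is no substantive obstacle here: the whole content of the observation is that $\codim V^g$ depends only on the conjugacy class of $g$, which is immediate from Lemma~\ref{lem723}, together with the bookkeeping that the ambient $G$-action and the homological grading are both respected by this regrouping.
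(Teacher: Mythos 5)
Your proof is correct and fills in exactly the bookkeeping that the paper leaves implicit: the Observation is stated without proof, and your argument---regrouping the $g$-summands by $\codim V^g$, invoking Proposition~\ref{prop.Gsubcmplx}(1) and Lemma~\ref{lem723} to see that conjugation preserves $\codim V^g$ so each $\mathscr{D}(i)$ is a graded $G$-submodule, then passing to invariants---is precisely the intended justification.
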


Said another way, $\mathscr{D}(i)$ consists of all summands in (\ref{eq:1248}) whose first nonzero cohomology class occurs in degree $i$.  Note that classes in $\mathscr{D}(0)$ are supported on only those group elements which act trivially on $V$.  The brackets between elements in $\mathscr{D}(0)^G$ will just be given by the Schouten brackets (cf.\ \cite[Corollary 7.4]{SW2}): 

\begin{corollary}\label{cor:brackform0}
Let $X=\sum_g X_gg$ and $Y=\sum_h Y_hh$ be $G$-invariant cocycles in $\oplus_{g\in G}S(V^g)\ox{\Wedge}^{{\bu}-\codim V^g} (V^g)^\ast {\det}^\perp_g$ that are supported on group elements acting trivially, i.e.\ $X,Y\in \mathscr{D}(0)$.  Then in cohomology,
$$
[X,Y]=\sum_{g,h} \{X_g,Y_h\}gh.
$$ 
\end{corollary}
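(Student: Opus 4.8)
The plan is to read the corollary off directly from Theorem~\ref{thm:gh}; the only thing to check is which summands survive and that the relevant projections $p_{gh}$ act trivially on them. First I would unwind the hypothesis $X,Y\in\mathscr{D}(0)$. Writing $X=\sum_{g}X_gg$ and $Y=\sum_{h}Y_hh$, Observation~\ref{obs:observer} says $X_g=0$ unless $\codim V^g=0$, i.e.\ unless $g$ lies in the kernel $N=\{g\in G:g|_V=\id\}$ of the representation $G\to\mathrm{GL}(V)$, and likewise $Y_h=0$ unless $h\in N$. Hence in the formula
\[
[X,Y]=\sum_{g,h\in G}p_{gh}\{X_g,Y_h\}gh
\]
of Theorem~\ref{thm:gh}, the only pairs $(g,h)$ that can contribute a nonzero term are those with $g,h\in N$.

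Next I would observe that $N$ is a normal subgroup of $G$, so whenever $g,h\in N$ we have $gh\in N$ as well. For such $gh$ we get $V^{gh}=V$, so $\codim V^{gh}=0$, the ideal $\mathfrak{I}_{gh}$ generated by $(1-gh)V=0$ is zero, and $\det^{\perp}_{gh}=\Wedge^{0}\big((1-gh)V\big)^\ast(gh)=k(gh)$ is one dimensional. Thus the target subspace $S(V^{gh})\ox\Wedge^{\bu-\codim V^{gh}}(V^{gh})^\ast\det^{\perp}_{gh}$ of $p_{gh}$ is all of $S(V)\ox\Wedge^{\bu}V^\ast(gh)$, the whole domain. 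Since $p_{gh}$ restricts to the identity on this subspace --- as recorded immediately after the Construction of $p_g$, and as also follows from Lemma~\ref{prop789}, whose kernel description collapses to $0$ once $\mathfrak{I}_{gh}=0$ and $\omega_{gh}$ is a nonzero scalar --- the map $p_{gh}$ is simply the identity on $S(V)\ox\Wedge^\bu V^\ast(gh)$.

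Putting the two observations together, every term $p_{gh}\{X_g,Y_h\}gh$ in the sum of Theorem~\ref{thm:gh} that is not forced to vanish equals $\{X_g,Y_h\}gh$, so the formula collapses to $[X,Y]=\sum_{g,h}\{X_g,Y_h\}gh$ in $\HH^\bu(S(V)\#G)$; that the right-hand side is a $G$-invariant cocycle is inherited from Theorem~\ref{thm:gh}. There is no genuine obstacle here beyond this unwinding --- the statement is a corollary in the literal sense --- and the one point worth a careful line is the verification that $p_{gh}$ is honestly the identity on the relevant summand rather than merely a quasi-isomorphism, which is exactly where the vanishing of $\mathfrak{I}_{gh}$ is used.
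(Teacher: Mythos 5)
Your proof is correct and follows essentially the same route as the paper's: apply Theorem~\ref{thm:gh} and note that for all contributing pairs $(g,h)$ one has $V^g=V^h=V^{gh}=V$, so $p_{gh}$ is the identity. (One tiny quibble: closure of the kernel under products is just the subgroup property; normality is not what is being used there.)
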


\begin{proof}
In this case for each $g,h$ with $X_g$ and $Y_h$ nonzero we will have $V^g=V^h=V^{gh}=V$ and $p_g=id$, $p_h=id$ and $p_{gh}=id$.
The result now follows from Theorem~\ref{thm:gh}.
\end{proof}

We refer directly to Theorem \ref{thm:gh} for information on the bracket between cochains in $\mathscr{D}(0)^G$ and $\mathscr{D}(>0)^G$.  We next give some conditions under which brackets are 0.  The following corollary was first proved in \cite{SW2} using different techniques.

\begin{corollary}[{\cite[Proposition 8.4]{SW2}}]\label{cor:vanish1}
Let $g,h\in G$ be such that $(V^g)^{\perp}\cap (V^h)^{\perp}$ is nonzero and
is a $kG$-submodule of $V$. 
Let $X,Y$ be $G$-invariant cocycles in the sum $\oplus_{g\in G} 
S(V^g)\ot \Wedge^{\bu -\codim V^g} (V^g)^* \det^{\perp}_g$ supported on
the conjugacy classes of $g,h$, respectively. Then 
\[
[X,Y] = 0.
\]
\end{corollary}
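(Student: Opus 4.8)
The plan is to apply Theorem~\ref{thm:gh} and show that every contributing term of the resulting sum already has vanishing Schouten bracket. Write $X=\sum_{g'}X_{g'}g'$ and $Y=\sum_{h'}Y_{h'}h'$; since $X$ is supported on the conjugacy class of $g$ and $Y$ on that of $h$, the only pairs $(g',h')$ with $X_{g'}\neq 0\neq Y_{h'}$ have $g'=aga^{-1}$ and $h'=bhb^{-1}$ for some $a,b\in G$. Recall that $(V^g)^\perp=(1-g)V$, and set $W:=(1-g)V\cap(1-h)V$, which by hypothesis is a nonzero $kG$-submodule of $V$. By the compatibility in Lemma~\ref{lem723}, $(1-g')V=a\cdot(1-g)V\supseteq a\cdot W=W$ and likewise $(1-h')V\supseteq W$, so $W\subseteq (1-g')V\cap(1-h')V$ for every such pair. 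It therefore suffices to prove $\{X_{g'},Y_{h'}\}=0$ whenever $W\subseteq(1-g')V\cap(1-h')V$.

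Next I would extract a common wedge factor coming from $W$. Since $W$ is a $kG$-submodule and $\mathrm{char}\,k=0$, complete reducibility gives a $kG$-stable complement $W'$ with $V=W\oplus W'$. A short argument using $g'$-stability of $W$ and $W'$ together with $V^{g'}\cap(1-g')V=0$ shows $V^{g'}\subseteq W'$, and similarly $V^{h'}\subseteq W'$; dualizing, $W^\ast\subseteq ((1-g')V)^\ast\cap((1-h')V)^\ast$ inside $V^\ast$. Hence, if $\omega_W$ is a generator of $\Wedge^{\dim W}W^\ast$, then $\omega_W$ divides any generator of $\det^\perp_{g'}$ and of $\det^\perp_{h'}$. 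Using the identification $S(V)\ox\Wedge^{\bu}V^\ast\cong\Wedge^{\bu}_{S(V)}T$ and Lemma~\ref{lem908}, I can then write $X_{g'}=A\wedge\omega_W$ and $Y_{h'}=B\wedge\omega_W$, where $A$ and $B$ are polyvector fields whose polynomial coefficients lie in $S(V^{g'})$, resp.\ $S(V^{h'})$ --- hence involve no variable from $W$, since $W\cap V^{g'}=W\cap V^{h'}=0$ --- and all of whose vector-field factors are constant (lie in $V^\ast$).

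Finally I would compute $\{X_{g'},Y_{h'}\}=\{A\wedge\omega_W,B\wedge\omega_W\}$ using that the Schouten bracket is a biderivation for the wedge product (the Gerstenhaber identity invoked already above). Writing $\omega_W$ as a wedge of the constant, pairwise commuting vector fields $\partial_{z_1},\dots,\partial_{z_{\dim W}}$ attached to a basis of $W$, one has $\{\omega_W,\omega_W\}=0$, and also $\{\omega_W,A\}=\{\omega_W,B\}=0$, since each $\partial_{z_i}$ commutes with every constant field and kills every coefficient of $A$ and of $B$. Expanding by the Leibniz rule then collapses everything to $\{X_{g'},Y_{h'}\}=\pm\{A,B\}\wedge\omega_W\wedge\omega_W=0$, the last wedge vanishing because $\omega_W$ repeats $\partial_{z_1}$. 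Feeding this into Theorem~\ref{thm:gh} gives $[X,Y]=\sum_{g',h'}p_{g'h'}\{X_{g'},Y_{h'}\}g'h'=0$. The crux is the middle step: producing a single wedge factor $\omega_W$ for $X_{g'}$ and $Y_{h'}$ simultaneously --- this is precisely where the $kG$-submodule hypothesis enters, through the $G$-stable complement and the inclusions $V^{g'},V^{h'}\subseteq W'$ --- after which the bracket computation is routine sign bookkeeping.
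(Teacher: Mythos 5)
Your proof is correct, but it takes a genuinely different route from the paper's. The paper does not pass through Theorem~\ref{thm:gh} at all: it argues directly at the chain level that every circle product $X_{g'}g'\circ_\phi Y_{h'}h'$ vanishes, by inspecting the explicit formula of Lemma~\ref{lem:circles}. There, a term can survive only if the omitted index $l$ corresponds to a functional dual to a vector in $(V^{g'})^\perp\cap(V^{h'})^\perp$ (otherwise the argument $o(\ldots)$ is a wedge of linearly dependent vectors), and for such $l$ the factor $X_l(v_{\sigma(r)})$ vanishes because $v_{\sigma(r)}\in V^{h'}$; the only thing the paper extracts from the $kG$-submodule hypothesis is that $(V^{aga^{-1}})^\perp\cap(V^{bhb^{-1}})^\perp\neq 0$ for all $a,b$. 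You instead invoke Theorem~\ref{thm:gh} and kill each Schouten bracket $\{X_{g'},Y_{h'}\}$ outright, which uses the hypothesis more heavily: the $G$-stable complement $W'$ of $W=(1-g)V\cap(1-h)V$ gives $V^{g'},V^{h'}\subseteq W'$, hence a common decomposable factor $\omega_W$ of $X_{g'}$ and $Y_{h'}$ whose constituent constant fields annihilate all coefficients of both, after which the biderivation property collapses the bracket onto $\{A,B\}\wedge\omega_W\wedge\omega_W=0$. Both arguments are sound; each step of yours (the inclusion $W\subseteq(1-g')V\cap(1-h')V$ via Lemma~\ref{lem723}, the deduction $V^{g'}\subseteq W'$ from $W^{g'}\subseteq W\cap V^{g'}=0$, and the divisibility of $\omega_{g'}$ by $\omega_W$) checks out. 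The paper's chain-level computation is the sharper statement---the circle products vanish on the nose, a fact it reuses in Corollary~\ref{cor:1315}---while yours is more conceptual and makes transparent exactly how the $kG$-submodule hypothesis forces the two polyvector fields to share a direction that the Schouten bracket cannot see.
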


\begin{proof}
The hypotheses imply that $(V^{aga^{-1}})^{\perp}\cap (V^{bhb^{-1}})^{\perp}$
is nonzero for all $a,b\in G$.
We will argue that $X_g g\circ_{\phi} Y_h h =0$ at the chain level, and similar
reasoning will apply to $X_{aga^{-1}} aga^{-1}\circ_{\phi} Y_{bhb^{-1}}bhb^{-1}$
and to $Y_{bhb^{-1}}bhb^{-1} \circ_{\phi} X_{aga^{-1}} aga^{-1}$.
Consider the argument $o(y_{j_1},\ldots,y_{j_{l-1}},y_1,\ldots, y_m, y_{j_{l+1}},\ldots, y_{j_d})$
in the proof of Lemma~\ref{lem:circles}. This can be nonzero only
in case  $y_{j_l}\in (V^g)^{\perp}\cap (V^h)^{\perp}$, due to linear 
dependence of the vectors involved otherwise.  
Thus the only possible terms
in the $\phi$-circle product formula of Lemma~\ref{lem:circles} 
that could be nonzero are indexed by such $l$.
However then $X_l(v_{\sigma(r)})=0$ for
all $r,\sigma$, since $v_{\sigma(r)}\in V^h$, in the notation of the proof
of Lemma~\ref{lem:circles}. 
\end{proof} 

The following corollary was pointed out to us by Travis Schedler.

\begin{corollary}\label{cor:1315}
For classes $X\in\mathscr{D}(i)^G$ and $Y\in\mathscr{D}(j)^G$ we have $[X,Y]\in\mathscr{D}(i+j)^G$.
\end{corollary}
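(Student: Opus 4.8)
The plan is to reduce the statement to a codimension estimate on each summand appearing in the formula of Theorem~\ref{thm:gh}. By definition of the codimension grading (Observation~\ref{obs:observer}), a class is in $\mathscr{D}(i)^G$ precisely when it is supported on group elements $g$ with $\codim V^g = i$. So write $X = \sum_g X_g g$ with $X_g$ supported on elements of codimension exactly $i$, and $Y = \sum_h Y_h h$ with $Y_h$ supported on elements of codimension exactly $j$. Theorem~\ref{thm:gh} gives $[X,Y] = \sum_{g,h} p_{gh}\{X_g,Y_h\} gh$, and each such term lives in the component indexed by the product $gh$. Since the codimension grading decomposes $\HH^\bu(S(V)\# G)$ into the subspaces $\mathscr{D}(\cdot)$, it suffices to show that for each pair $g,h$ with $\codim V^g = i$ and $\codim V^h = j$, the term $p_{gh}\{X_g,Y_h\} gh$ lies in $\mathscr{D}(\codim V^{gh})$ and that $\codim V^{gh} = i + j$ whenever this term is nonzero. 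Actually it is cleaner to phrase it as: the term is automatically in $\mathscr{D}(\codim V^{gh})$ since it is supported on $gh$, so I only need to show that $p_{gh}\{X_g,Y_h\} = 0$ unless $\codim V^{gh} = i+j$, and more precisely that the term vanishes whenever $\codim V^{gh} \ne i+j$. In fact the natural bound to establish is $\codim V^{gh} \le \codim V^g + \codim V^h$ always, together with the vanishing of $p_{gh}\{X_g, Y_h\}$ when $\codim V^{gh} < i+j$.

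First I would record the elementary linear-algebra fact that $(V^{gh})^\perp \subseteq (V^g)^\perp + (V^h)^\perp$ — equivalently $V^g \cap V^h \subseteq V^{gh}$ — which gives $\codim V^{gh} \le \codim V^g + \codim V^h = i+j$ at once. So every term already lands in $\mathscr{D}(\le i+j)$; the content is to kill the strictly-smaller contributions. For this I would work at the chain level with the $\phi$-circle product formula of Lemma~\ref{lem:circles}, exactly as in the proof of Corollary~\ref{cor:vanish1}. Writing $X_g$ as a sum of monomials $u_1\cdots u_s\bar X$ with $u_a\in V^g$ and $\bar X$ divisible by $\omega_g$ (a generator of $\bigwedge^{\codim V^g}((1-g)V)^\ast$), and similarly $Y_h = \sum v_1\cdots v_t \bar Y$ with $v_b \in V^h$, $\omega_h \mid \bar Y$, one inspects the formula for $X_g g \circ_\phi Y_h h$. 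The wedge factor appearing there has the shape $o(\,\cdots, v_{\sigma(r)}, u_1,\ldots,u_s\,)$ times the surviving parts of $\bar X$ and $\bar Y$, so its "non-$V^{gh}$" part contains the wedge of $\omega_g$-type covectors (vanishing on $V^g$) together with $\omega_h$-type covectors (vanishing on $V^h$); after applying $p_{gh}$, which by Lemma~\ref{prop789} annihilates anything not divisible by $\omega_{gh}$ (a generator of $\bigwedge^{\codim V^{gh}}((1-gh)V)^\ast$) and anything in $\mathfrak{I}_{gh}\ot\bigwedge^\bu V^\ast$, a term can survive only if the covector factors contributed from $(V^g)^\perp$ and $(V^h)^\perp$ together span a space of dimension at least $\codim V^{gh}$ that is contained in $(V^{gh})^\perp$. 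When $\codim V^{gh} = i+j$ this forces $(V^g)^\perp \oplus (V^h)^\perp = (V^{gh})^\perp$ internally and everything is consistent; when $\codim V^{gh} < i+j$ there is a forced linear dependence among the covectors (their span has dimension $> \codim V^{gh}$ but must embed in the $\codim V^{gh}$-dimensional space $(V^{gh})^\perp$ after projection, or alternatively $(V^g)^\perp \cap (V^h)^\perp \ne 0$ produces a repeated covector the way $(V^g)^\perp\cap(V^h)^\perp$ did in Corollary~\ref{cor:vanish1}), so the wedge factor is already $0$ or is killed by $p_{gh}$. This shows $p_{gh}\{X_g,Y_h\} = 0$ unless $\codim V^{gh} = i+j$, and since $\{X_g, Y_h\} = [X_g, Y_h]_\phi$ represents the $\phi$-bracket by Lemma~\ref{lem908}, the reasoning transfers verbatim (both orderings of the circle product, and all conjugates $aga^{-1}, bhb^{-1}$, behave the same way by $G$-invariance, as in Corollary~\ref{cor:vanish1}).

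Assembling: every surviving term $p_{gh}\{X_g,Y_h\}gh$ is supported on an element $gh$ with $\codim V^{gh} = i+j$, hence lies in $\mathscr{D}(i+j)$; the remaining terms vanish; so $[X,Y]\in\mathscr{D}(i+j)^G$ (the $G$-invariance being already part of Theorem~\ref{thm:gh}). The main obstacle is the second paragraph's chain-level computation — making precise that $p_{gh}$ applied to the $\phi$-circle product formula of Lemma~\ref{lem:circles} vanishes whenever $\codim V^{gh}$ is strictly less than $\codim V^g + \codim V^h$. The bookkeeping is essentially a refinement of the argument in Corollary~\ref{cor:vanish1}: there one used $(V^g)^\perp \cap (V^h)^\perp \ne 0$ to force a repeated covector; here one uses the dimension count $\dim((V^g)^\perp + (V^h)^\perp) > \codim V^{gh}$, which again forces either linear dependence among the covectors feeding the wedge (so the monomial is $0$) or non-divisibility by $\omega_{gh}$ (so $p_{gh}$ kills it). One should be slightly careful that the two covector groups genuinely come from $(V^g)^\perp$ and $(V^h)^\perp$ after using $h$-invariance ${}^g v_i = {}^{gh} v_i$ and $g$-invariance on the $\bar X$ side, exactly the maneuvers already performed inside the proof of Theorem~\ref{thm:gh}, so I would cite those identities rather than re-deriving them.
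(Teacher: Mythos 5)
Your strategy rests on the same two ingredients as the paper's proof --- the chain-level vanishing of the circle product from the argument of Corollary~\ref{cor:vanish1}, and a codimension-additivity fact --- but you run the logic in the contrapositive direction, and the step you yourself flag as ``the main obstacle'' (killing the terms with $\codim V^{gh}<i+j$) is exactly where your write-up is soft. Your primary mechanism, a dimension count on the span of the covectors feeding the wedge compared against $\dim(V^{gh})^\perp$ after applying $p_{gh}$, is not made precise and does not obviously force either linear dependence in the wedge or failure of divisibility by $\omega_{gh}$; note that a pure degree count cannot work here, since a term of homological degree $i+j-1$ is not automatically excluded from $\mathscr{D}(\codim V^{gh})$ when $\codim V^{gh}\le i+j-1$. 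Your fallback mechanism (a repeated covector coming from $(V^g)^\perp\cap(V^h)^\perp\ne 0$, as in Corollary~\ref{cor:vanish1}) is the right one, but to know it applies in every case you must kill, you need the implication: $\codim V^{gh}<\codim V^g+\codim V^h$ forces $(V^g)^\perp\cap(V^h)^\perp\ne 0$. That is precisely the contrapositive of \cite[Lemma~2.1]{SW}, which the paper cites at this point and which you neither cite nor prove; it is not formal linear algebra (it uses that $g$ and $h$ have finite order, e.g.\ via an invariant bilinear form identifying $(1-g)V$ with the orthogonal complement of $V^g$). Once that lemma is in hand, the cleanest assembly is the paper's forward direction, which makes both $p_{gh}$ and Theorem~\ref{thm:gh} unnecessary: nonvanishing of $X_gg\circ_\phi Y_hh$ (in either order) already forces $(V^g)^\perp\cap(V^h)^\perp=0$ by the argument of Corollary~\ref{cor:vanish1}, hence $\codim V^{gh}=i+j$ by \cite[Lemma~2.1]{SW}, so every surviving term of $[X,Y]_\phi=\sum_{g,h}[X_gg,Y_hh]_\phi$ lies in $\mathscr{D}(i+j)$ at the chain level, even for non-invariant cochains. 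Restructured this way, your separately proved inequality $\codim V^{gh}\le i+j$ is not needed.
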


\begin{proof}
As was argued in the proof of Corollary~\ref{cor:vanish1} we find that $X'\circ_\phi Y'=0$ whenever 
\[
X'\in S(V^g)\ox{\Wedge}^{{\bu}-\codim V^g} (V^g)^\ast {\det}^\perp_g\ \ \mathrm{and}\ \ 
Y'\in S(V^h)\ox{\Wedge}^{{\bu}-\codim V^h} (V^h)^\ast {\det}^\perp_h
\]
and $(V^g)^\perp\cap (V^h)^\perp\neq 0$.  That is to say, nonvanishing of the element $X'\circ_\phi Y'$ implies $(V^g)^\perp\cap (V^h)^\perp=0$ and therefore $\mathrm{codim}V^{gh}=\mathrm{codim}V^g+\mathrm{codim}V^h$~\cite[Lemma 2.1]{SW}.  It follows that for (even non-invariant) classes $X=\sum_g X_gg\in \mathscr{D}(i)$ and $Y=\sum_h Y_hh\in\mathscr{D}(j)$ we have
\[
[X,Y]_\phi=\sum_{g,h} [X_gg,Y_hh]_\phi\in \mathscr{D}(i+j).
\]
\end{proof}

\begin{corollary}\label{cor:codimgrading}
The Hochschild cohomology $\HH^\bu(S(V)\# G)$ is a graded Gerstenhaber algebra with respect to the codimension grading.
\end{corollary}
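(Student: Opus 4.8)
The plan is to assemble the statement from three facts already in place: (i) the Hochschild cohomology $\HH^\bu(S(V)\# G)$ is a Gerstenhaber algebra via the general theory of Section~\ref{sec:prelim}; (ii) the codimension grading of Observation~\ref{obs:observer} is compatible with the cup product; and (iii) the codimension grading is compatible with the Gerstenhaber bracket, which is exactly Corollary~\ref{cor:1315}. Since (i) is standard and (iii) has just been proved, the only remaining point is (ii): that $\mathscr{D}(i)^G\cup \mathscr{D}(j)^G\subseteq \mathscr{D}(i+j)^G$.

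First I would recall the explicit description of the cup product on $\HH^\bu(S(V),S(V)\# G)=\bigoplus_g S(V^g)\ox\Wedge^{\bu-\codim V^g}(V^g)^\ast\det_g^\perp$. On the component indexed by a pair $(g,h)$, the product of a class supported on $g$ and one supported on $h$ lands in the component indexed by $gh$, and is given (after applying the projection $p_{gh}$, or equivalently Farinati's calculation) by wedging the polyvector-field parts together, multiplying the function parts, and incorporating the determinant factors $\det_g^\perp$ and $\det_h^\perp$. The key numerical input is that this product is zero unless $(V^g)^\perp\cap(V^h)^\perp=0$ — otherwise the wedge of the two $\det^\perp$-factors vanishes by linear dependence, just as in the proof of Corollary~\ref{cor:vanish1} — and in that case $\codim V^{gh}=\codim V^g+\codim V^h$ by \cite[Lemma~2.1]{SW}. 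Hence a product of something in $\mathscr{D}(i)$ with something in $\mathscr{D}(j)$ is supported on group elements $gh$ with $\codim V^{gh}=i+j$, i.e. lies in $\mathscr{D}(i+j)$; restricting to $G$-invariants gives $\mathscr{D}(i)^G\cup\mathscr{D}(j)^G\subseteq \mathscr{D}(i+j)^G$.

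Then the conclusion is immediate: $\HH^\bu(S(V)\# G)$ is already a Gerstenhaber algebra, the decomposition $\bigoplus_i \mathscr{D}(i)^G$ of Observation~\ref{obs:observer} is a vector space grading, it is multiplicative for the cup product by the previous paragraph, and it is compatible with the bracket by Corollary~\ref{cor:1315} in the precise sense that $[\mathscr{D}(i)^G,\mathscr{D}(j)^G]\subseteq\mathscr{D}(i+j)^G$. One should note that the bracket lowers the internal (homological/cohomological) degree by $1$ while the codimension degree is additive, which is consistent with the usual conventions for a graded Gerstenhaber algebra where the Lie bracket has degree $-1$ with respect to the grading it is compatible with; I would remark that here the relevant grading for the Gerstenhaber-algebra structure is precisely the codimension grading, not the cohomological grading, and the shift is accounted for exactly as in Corollary~\ref{cor:1315}. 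Assembling (i)–(iii) then yields the statement.

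I expect the main obstacle to be bookkeeping in step (ii): making the cup product formula on $\bigoplus_g S(V^g)\ox\Wedge^\bu(V^g)^\ast\det_g^\perp$ explicit enough to see the vanishing of products when $(V^g)^\perp\cap(V^h)^\perp\neq 0$, and tracking the determinant factors $\det_g^\perp$, $\det_h^\perp$, $\det_{gh}^\perp$ carefully. Once that is in hand — and it parallels the computation already done for circle products in the proof of Corollary~\ref{cor:vanish1} — the rest is formal.
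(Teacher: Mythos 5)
Your proposal is correct and follows essentially the same route as the paper: combine Corollary~\ref{cor:1315} (bracket compatibility) with the fact that the cup product respects the codimension grading. The only difference is that the paper simply cites \cite{etingofginzburg} and \cite[Proposition 8.1, Theorem 9.2]{SW} for the cup-product compatibility rather than re-deriving it, so your sketch of step (ii) is extra work the paper delegates to the references.
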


\begin{proof}
We just saw in Corollary~\ref{cor:1315} that the Gerstenhaber bracket respects the codimension grading.  It has also already been established that the cup product respects the codimension grading~\cite{etingofginzburg}, \cite[Proposition 8.1, Theorem 9.2]{SW}.
\end{proof}

The following corollary generalizes \cite[Theorem~9.2]{SW2}, where it
was proven in homological degree~2.
The cocycles $X,Y$ in the corollary are by hypothesis of smallest possible homological
degree in their group components.

\begin{corollary}\label{thm:supp-off-id}
Take any $G$-invariant cocycles $X= \sum_{g\in G} X_g g$,
$Y=\sum_{h\in G} Y_h h$ in $\oplus_{g\in G} S(V^g)\ot \det_g^{\perp} $
that are supported on elements that act nontrivially on $V$.
Then $[X,Y]= 0$ on cohomology. 
\end{corollary}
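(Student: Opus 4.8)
The plan is to deduce this corollary from Theorem~\ref{thm:gh} together with the degree bookkeeping established in the preceding corollaries. Since $X$ and $Y$ are supported on elements acting nontrivially, each nonzero component $X_g g$ lies in $S(V^g)\ot \det_g^\perp$ with $\codim V^g \geq 1$, so $X \in \bigoplus_{i\geq 1}\mathscr{D}(i)$ and likewise $Y$; in fact the hypothesis that $X_g \in S(V^g)\ot\det_g^\perp$ (with no extra wedge factors from $(V^g)^\ast$) says precisely that $X_g g$ sits in the \emph{lowest} cohomological degree $\codim V^g$ of its group component, and similarly for $Y_h h$.

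First I would invoke Theorem~\ref{thm:gh} to write $[X,Y] = \sum_{g,h} p_{gh}\{X_g,Y_h\} gh$ on cohomology. Next, fix a pair $g,h$ with $X_g, Y_h$ both nonzero. By the argument in the proof of Corollary~\ref{cor:1315} (equivalently Corollary~\ref{cor:vanish1}), if $(V^g)^\perp \cap (V^h)^\perp \neq 0$ then already $X_g g \circ_\phi Y_h h = 0$ at the chain level, and these terms contribute nothing; so I only need to treat pairs with $(V^g)^\perp \cap (V^h)^\perp = 0$, for which $\codim V^{gh} = \codim V^g + \codim V^h$ by \cite[Lemma~2.1]{SW}. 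For such a pair, the Schouten bracket $\{X_g,Y_h\}$ is an element of $S(V)\ot\Wedge^\bu V^\ast$ of wedge-degree $|X_g| + |Y_h| - 1 = \codim V^g + \codim V^h - 1 = \codim V^{gh} - 1$. But $p_{gh}$ projects into $S(V^{gh})\ot\Wedge^{\bu - \codim V^{gh}}(V^{gh})^\ast \det_{gh}^\perp$, whose lowest wedge-degree is $\codim V^{gh}$ (the degree of $\det_{gh}^\perp$ itself). Since $\codim V^{gh} - 1 < \codim V^{gh}$, the target has nothing in that degree, so $p_{gh}\{X_g,Y_h\} = 0$. Summing over all $g,h$ gives $[X,Y] = 0$.

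The one point requiring a little care is the wedge-degree count for $p_{gh}$: I should verify that $p_{gh}$, as constructed in the Construction of $p_g$, genuinely shifts the wedge-degree by exactly $-\codim V^{gh}$, i.e.\ that under $p_{gh}^2: \Wedge^\bu V^\ast \to \Wedge^{\bu - \codim V^{gh}}(V^{gh})^\ast \det_{gh}^\perp$ a homogeneous element of wedge-degree $q$ can be nonzero only if $q \geq \codim V^{gh}$. This is immediate from the decomposition $\Wedge^q V^\ast = \bigoplus_{q_1+q_2 = q}(\Wedge^{q_1}(V^{gh})^\ast)\wedge(\Wedge^{q_2}((1-gh)V)^\ast)$ and the fact that $p_{gh}^2$ retains only the summand with $q_2 = \codim V^{gh}$, forcing $q \geq \codim V^{gh}$. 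So the main (and only mild) obstacle is simply making this degree inequality precise; everything else is a direct assembly of Theorem~\ref{thm:gh}, the vanishing from Corollary~\ref{cor:1315}, and the dimension formula $\codim V^{gh} = \codim V^g + \codim V^h$ when $(V^g)^\perp\cap(V^h)^\perp = 0$.
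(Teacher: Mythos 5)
Your proof is correct and is essentially the paper's argument: the paper phrases it via the codimension grading (Corollary~\ref{cor:1315} gives $[X,Y]\in\mathscr{D}(i+j)$, and the degree $-1$ shift of the bracket lands below the minimal degree $i+j$ of that piece), while you unpack the same two ingredients componentwise from Theorem~\ref{thm:gh}. The degree bookkeeping for $p_{gh}$ that you flag as the one delicate point is exactly the observation that $\mathscr{D}(i+j)$ has no nonzero classes in degree $i+j-1$, so nothing is missing.
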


\begin{proof}
We have that $X$ and $Y$ are sums of invariant classes $X'$ and $Y'$ which are of minimal degrees in $\mathscr{D}(i)$ and $\mathscr{D}(j)$ respectively, i.e. degrees $i$ and $j$.  By Corollary~\ref{cor:1315} we have $[X',Y']\in\mathscr{D}(i+j)$.  But now since the Gerstenhaber bracket is homogeneous of degree $-1$ we have that $[X',Y']$ is of degree $i+j-1$.  Since there are no nonzero classes of degree $i+j-1$ in $\mathscr{D}(i+j)$ we have that all $[X',Y']=0$ and hence $[X,Y]=0$.
\end{proof}





\section{(Non)vanishing of brackets in the case $(V^g)^\perp\cap(V^h)^\perp=0$}
\label{sec:nonvanishing} 

With Corollaries \ref{cor:vanish1} and \ref{thm:supp-off-id} we seem to be approaching a general result.  Namely, that for any $X$ and $Y$ supported on group elements that act nontrivially we will have $[X,Y]=0$.  It is even known that such a vanishing result holds in degree~$2$ by \cite[Theorem 9.2]{SW2}.  This is, however, not going to be the case in higher degrees.  The result even fails to hold when we consider the bracket $[X,Y]$ of elements in degrees~$2$ and $3$ (see Example~\ref{ex:two} below).  We give in this section a few examples to illustrate this nonvanishing, and (re)establish vanishing in degree $2$.

\subsection{Some examples for which $(V^g)^\perp\cap(V^h)^\perp=0$}

The following two examples illustrate, first, the essential role of taking invariants in establishing the degree $2$ vanishing result of \cite[Theorem 9.2]{SW2} and, second, an obstruction to establishing a general vanishing result in the case $(V^g)^\perp\cap(V^h)^\perp=0$.

\begin{example}\label{ex:one}
Take $G=\mathbb{Z}/2\mathbb{Z}\times\mathbb{Z}/2\mathbb{Z}$ and $V=k\{x_1,x_2,x_3\}$.  Let $g$ and $h$ be the generators of the first and second copies of $\mathbb{Z}/2\mathbb{Z}$, and take the $G$-action on $V$ defined by
$$
g\cdot x_i=(-1)^{\delta_{i1}}x_i\ \ \text{and}\ \ h\cdot x_j=(-1)^{\delta_{j3}}x_j.
$$
So $V^g=k\{x_2,x_3\}$, $(1-g)V=k x_1$, $V^h=k\{x_1,x_2\}$, $(1-h)V=kx_3$.  Obviously, $(1-g)V\cap(1-h)V=0$.  We also have respective generators
$$
\omega_g=\partial_1,\ \ \omega_h=\partial_3,\ \ \text{and}\ \ \omega_{gh}=\partial_1\partial_3
$$
of the highest wedge powers of $\big((1-g)V\big)^\ast$, $\big((1-h)V\big)^\ast$, and $\big((1-gh)V\big)^\ast$ respectively.  Thus $\det^\perp_g=k\omega_gg$, $\det^\perp_h=k\omega_h h$, and $\det^\perp_{gh}=k\omega_{gh} gh$.
\par

Consider the degree $2$ cochains
$$
X=\omega_g\partial_2g\in S(V^g)\ox (V^g)^\ast{\det}^\perp_g
$$
and
$$
Y=x_2\omega_h\partial_2h\in S(V^h)\ox (V^h)^\ast{\det}^\perp_h.
$$
Then one can easily see, directly from Lemma \ref{lem:circles}, that applying the bilinear operation $[\ ,\ ]_\phi$ produces
$$
[X,Y]_\phi=\omega_g\omega_h\partial_2gh=\omega_{gh}\partial_2gh\in S(V^{gh})\ox(V^{gh})^\ast{\det}^\perp_{gh}.
$$
This would appear to contradict the degree 2 vanishing result of \cite{SW2}, but it actually does not!  The point is that neither $X$ nor $Y$ is invariant.  In fact, $X\cdot\int_G=Y\cdot\int_G=0$.
\end{example}

\begin{example}\label{ex:two}
Let $G=\mathbb{Z}/N\mathbb{Z}\times\mathbb{Z}/M\mathbb{Z}$ for integers $N,M>1$.  We assume $k=\bar{k}$, or that $M=N=2$.  Let $\sigma$ and $\tau$ be the generators of $\mathbb{Z}/N\mathbb{Z}$ and $\mathbb{Z}/M\mathbb{Z}$ respectively.  Take $W=k\{x_1,x_2,x_3,x_4,x_5\}$ and embed $G$ in $GL(W)$ by identifying $\sigma$ and $\tau$ with the diagonal matrices
$$
\sigma=\mathrm{diag}\{\zeta,\zeta^{-1},1,1,1\},\ \ 
\tau=\mathrm{diag}\{1,1,1,\vartheta^{-1},\vartheta\},
$$
where $\zeta$ and $\vartheta$ are primitive $N$th and $M$th roots of $1$ in $k$.
\par

We have
$$
(1-\sigma)=\mathrm{diag}\{(1-\zeta),(1-\zeta^{-1}),0,0,0\},\ \ (1-\tau)=\mathrm{diag}\{0,0,0,(1-\vartheta^{-1}),(1-\vartheta)\},
$$
and these are both rank $2$ matrices.  More specifically, $(1-\sigma)W=k\{x_1,x_2\}$ and $(1-\tau)W=k\{x_4,x_5\}$.  So $(W^\sigma)^\perp\cap (W^\tau)^\perp=0$.  Similarly we have
$$
\sigma\tau=\mathrm{diag}\{\zeta,\zeta^{-1},1,\vartheta^{-1},\vartheta\},\ \ (1-\sigma\tau)=\mathrm{diag}\{(1-\zeta),(1-\zeta^{-1}),0,(1-\vartheta^{-1}),(1-\vartheta)\},
$$
$$
(1-\sigma\tau)W=k\{x_1,x_2,x_4,x_5\}.
$$
\par

We take
$$
\omega_\sigma=\partial_1\partial_2,\ \ \omega_\tau=\partial_4\partial_5,\ \ \omega_{\sigma\tau}=\partial_1\partial_2\partial_4\partial_5,
$$
and $X=\omega_{\sigma}\partial_3\sigma$, $Y=x_3\omega_\tau\tau$.  In this case $X$ and $Y$ are $G$-invariant,  and hence represent classes in $\mathrm{HH}^\bullet(S(V)\# G)$.  One then produces via Theorem \ref{thm:gh} the nonvanishing Gerstenhaber bracket
$$
[X,Y]=\omega_{\sigma\tau}\sigma\tau\in \mathrm{HH}^4(S(V)\# G).
$$
This example can be generalized  easily to produce nonzero brackets in higher degree.
\end{example}

\subsection{Vanishing of brackets in degree 2}

Consider the subcomplex 
\[
   \mathscr{D}(1)\subset\bigoplus_g S(V^g)\ox \Wedge^{\bu-\codim V^g}(V^g)^\ast\det_g^\perp
\]
consisting of all summands corresponding to group elements $g$ with $\codim V^g=1$.  This subcomplex is stable under the $G$-action.  It was seen already in \cite{farinati,SW2} that $\mathscr{D}(1)^G=0$.  So actually, after we take invariance, we have
$$
\HH^\bu(S(V)\# G)=\big(\bigoplus_g S(V^g)\ox \Wedge^{\bu-\codim V^g}(V^g)^\ast\det_g^\perp\big)^G=\mathscr{D}(0)^G\oplus \mathscr{D}(>1)^G.
$$
where in $\mathscr{D}(>1)$ we have all summands corresponding to $g$ with $\mathrm{codim}V^g>1$.
\par

It follows that, after we take invariants and restrict ourselves to considering only elements in degree $2$, the only situations that can occur when taking brackets in $\mathscr{D}(>0)^G=\mathscr{D}(>1)^G$ are covered by Corollary \ref{thm:supp-off-id}.  Hence when we apply the Gerstenhaber bracket we get
$$
\left[\left(\mathscr{D}(>0)^2\right)^G,\left(\mathscr{D}(>0)^2\right)^G\right]=0.
$$
This rephrases the argument given in \cite[Theorem 9.2]{SW2}.


\appendix
\section{}

This appendix is dedicated to giving details of the proof of Lemma~\ref{lemma:phimap}. 
Take $\underline{n}=\{1,\dots, n\}$.
We first state the lemma in a slightly different way. 

\begin{proposition}\label{prop:phimap}
There are coefficients $\xi_r^{s,t,z}\in\mathbb{Q}$ such that the degree $-1$ $A^e$-linear map $\phi:K\ox_A K\to K$ given by
$$
\phi\big(1\ox o(u_1,\dots, u_s)\ox v_1\dots v_t\ox o(w_1,\dots, w_z)\ox 1\big)
$$
$$
=\sum_{\sigma\in S_t,\ r\in \underline{t}} (-1)^{sz+z}\xi_r^{s,t,z}v_{\sigma(1)}\dots v_{\sigma(r-1)}\ox o(w_1,\dots, w_z,v_{\sigma(r)},u_1,\dots, u_s)\ox v_{\sigma(r+1)}\dots v_{\sigma(t)}.
$$
satisfies
$$
d_K\phi+\phi d_{K\ox_A K}=F_K.
$$
If we take all the coefficients for $\phi|K_0\ox_A K_0$ to be $\xi_r^{0,t,0}=\frac{1}{t!}$, and suppose $\dim V$ is arbitrarily large, then the $\xi_r^{s,t,z}$ are specified uniquely as
\begin{equation}\label{0}
\xi^{s,t,z}_r=\frac{(r+z-1)!(t-r+s)!}{(r-1)!(t-r)!(s+t+z)!},
\end{equation}
\end{proposition}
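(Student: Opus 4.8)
The plan is to verify the chain-homotopy identity $d_K\phi+\phi\,d_{K\ox_A K}=F_K$ by a direct expansion on $A^e$-module generators, and to read off from the resulting coefficient comparison the linear recursion that pins down (\ref{0}). Since $d_K\phi$, $\phi\,d_{K\ox_A K}$ and $F_K$ are all $A^e$-linear maps $K\ox_A K\to K$, it suffices to compare their values on the generators
\[
   g_{s,t,z}\ :=\ 1\ox o(u_1,\dots,u_s)\ox v_1\cdots v_t\ox o(w_1,\dots,w_z)\ox 1\ \in\ K_s\ox_A K_z .
\]
Each such value is symmetric-multilinear in the $v_i$ and alternating-multilinear in the $u_j$ and in the $w_k$, and the whole construction is natural in $V$: an embedding $V\hookrightarrow V'$ induces an injection $K(S(V))\ox_{S(V)}K(S(V))\hookrightarrow K(S(V'))\ox_{S(V')}K(S(V'))$ intertwining $d$, $\phi$ and $F_K$. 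Hence it is enough to check the identity on tuples of \emph{distinct} basis vectors in a $V$ of arbitrarily large dimension, where $K$ is a free $A^e$-module on the $o(e_{i_1},\dots,e_{i_k})$ with strictly increasing indices; the general case follows by functoriality, and this is exactly the setting in which comparing coefficients of basis monomials is legitimate --- which is what will yield the uniqueness assertion.

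First I would expand $d_K\phi(g_{s,t,z})$. Every summand of $\phi(g_{s,t,z})$ carries the single wedge $o(w_1,\dots,w_z,v_{\sigma(r)},u_1,\dots,u_s)$, and the Koszul differential pulls one of its $s+z+1$ vectors into an adjacent polynomial factor; this produces three families of terms --- those removing the distinguished $v_{\sigma(r)}$ (the wedge becoming $o(w_1,\dots,w_z,u_1,\dots,u_s)$), those removing some $w_k$, and those removing some $u_j$. Next I would expand $\phi\,d_{K\ox_A K}(g_{s,t,z})$, where $d_{K\ox_A K}=d_K\ox\id_K\pm\id_K\ox d_K$; here the Koszul differential first moves a $u_j$ or a $w_k$ out of its wedge and into an adjacent polynomial factor, producing elements of (what amounts to) $K_{s-1}\ox_A K_z$ and $K_s\ox_A K_{z-1}$ with one extra vector in the middle polynomial, to which the formula of Definition~\ref{def:phi} is then applied; that application may re-select the newly freed vector, reabsorbing it into a wedge, or select one of the original $v_i$.

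The core of the argument is the regrouping. After carrying out the sums over $\sigma\in S_t$, I would sort all terms of $d_K\phi(g_{s,t,z})+\phi\,d_{K\ox_A K}(g_{s,t,z})$ by the \emph{shape} of the resulting basis monomial --- which vectors sit in the left polynomial factor, which in the wedge, which in the right polynomial factor --- all of wedge-degree $s+z$. The monomials whose wedge is $o(w_1,\dots,w_z,u_1,\dots,u_s)$ receive contributions from $d_K\phi$ (removing the distinguished $v$) and from $\phi\,d_{K\ox_A K}$ (re-selecting the freed $u_j$ or $w_k$); in the base case $s=z=0$ this combination is a pure telescoping sum in the splitting index $r$ whose surviving $r=1$ and $r=t$ boundary terms reproduce $F_K(g_{0,t,0})=v_1\cdots v_t\ox 1-1\ox v_1\cdots v_t$, while for $s=0<z$, for $z=0<s$, and for $s,z>0$ the surviving boundary terms reproduce respectively $v_1\cdots v_t\ox o(w_1,\dots,w_z)\ox 1$, $\ -1\ox o(u_1,\dots,u_s)\ox v_1\cdots v_t$, and $0$. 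The monomials with a genuinely mixed wedge receive contributions from $d_K\phi$ (removing a $w_k$ or $u_j$ from the big wedge) and from $\phi\,d_{K\ox_A K}$ (freeing a $w_k$ or $u_j$ and then selecting an original $v_i$), and their vanishing imposes the remaining relations. Matching coefficients of the linearly independent basis monomials of $K$ thus produces a finite system of linear equations expressing each $\xi^{s,t,z}_r$ through its neighbours $\xi^{s,t,z}_{r\pm1}$ and through the coefficients $\xi^{s-1,t+1,z}$ and $\xi^{s,t+1,z-1}$ of smaller total wedge-degree $s+z$, with the $s=z=0$ case forcing $\xi^{0,t,0}_r=1/t!$. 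Finally I would check that (\ref{0}) satisfies every one of these relations; using $\prod_{i=0}^{z-1}(r+i)=(r+z-1)!/(r-1)!$ and $\prod_{j=1}^{s}(t-r+j)=(t-r+s)!/(t-r)!$ this reduces to elementary factorial cancellations such as $\xi^{s,t,z}_r=(t+1-r)\,\xi^{s-1,t+1,z}_r$, and conversely the system, together with the base case, determines all $\xi^{s,t,z}_r$ uniquely by induction on $s+z$.

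The main obstacle is entirely bookkeeping: keeping the Koszul signs, the sign $(-1)^{sz+z}$ built into $\phi$, and the Koszul sign $(-1)^s$ in $d_{K\ox_A K}$ mutually consistent across a large expansion, and correctly isolating the $r=1$ and $r=t$ boundary contributions in each of the cases $s=0$, $z=0$, and $s,z>0$. Once the recursion has been extracted correctly, checking that (\ref{0}) solves it, and deducing uniqueness, are routine.
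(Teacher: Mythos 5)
Your proposal matches the paper's own proof (given in the appendix): both expand $d_K\phi+\phi d_{K\ox_A K}$ on generators $1\ox o(u_1,\dots,u_s)\ox v_1\cdots v_t\ox o(w_1,\dots,w_z)\ox 1$, sort the resulting terms by the shape of the output monomial, extract a finite system of linear relations among the $\xi^{s,t,z}_r$ (telescoping in $r$ for the boundary terms giving $F_K$, plus relations to $\xi^{s-1,t+1,z}$ and $\xi^{s,t+1,z-1}$ for the mixed-wedge terms), and verify by elementary factorial identities that the closed form (\ref{0}) satisfies them all, with uniqueness following from the recursion and the base case $\xi^{0,t,0}_r=1/t!$. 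The reduction to distinct basis vectors in a large $V$ is likewise how the paper justifies uniqueness, so no further comment is needed.
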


We will show that the coefficients $\xi_r^{s,t,z}$ given at (\ref{0}) produce a map $\phi$ with the necessary property.  We note that a sum indexed over the empty set is $0$.
\par

We check the formula on the subcomplexes $K_{\geq 0}\ox_A K_0$ and $K_0\ox_A K_{\geq 0}$.  On these subcomplexes the coefficients $\xi^{s,t,0}_r$ and $\xi^{0,s,z}_r$ are given by
$$
\xi^{s,t,0}_r=\frac{(t-r+s)!}{(t-r)!(s+t)!}\ \ \mathrm{and}\ \ \xi^{0,t,z}_r=\frac{(r+z-1)!}{(r-1)!(t+z)!}
$$
respectively.  We then proceed to check the formula of $K_{>0}\ot K_{>0}$.  One can check easily that in degree $0$ we have
$$
d\phi_0=\mu\ox id-id\ox \mu:A\ox A\ox A.
$$
So we will only be considering elements of (total) degree $\geq 1$ below.
\par

Take a monomial
$$
1\ox o(u_1\dots u_s)\ox v_1\dots v_t\ox 1
$$
in $K_{> 0}\ox_A K_0$.  Let $a=v_1\dots v_t$ Then we have
$$
\ba{l}
d\phi(1\ox o(u_1\dots u_s)\ox v_1\dots v_t\ox 1)\\
=d(\sum \xi_r^{s,t,0}v_{\sigma(1)}\dots v_{\sigma(r-1)}\ox o(v_{\sigma(r)},u_1\dots u_s)\ox v_{\sigma(r+1)}\dots v_{\sigma(t)})\\
\\
=\sum_{r,\sigma,l}(-1)^{l}\xi_r^{s,t,0}v_{\sigma(1)}\dots v_{\sigma(r-1)}u_l\ox o(v_{\sigma(r)},u_1\dots\hat{u_l}\dots u_s)\ox v_{\sigma(r+1)}\dots v_{\sigma(t)}\\
-\sum_{r,\sigma,l}(-1)^{l}\xi_r^{s,t,0}v_{\sigma(1)}\dots v_{\sigma(r-1)}\ox o(v_{\sigma(r)},u_1\dots\hat{u_l}\dots u_s)\ox u_lv_{\sigma(r+1)}\dots v_{\sigma(t)}\\
+\sum_{r,\sigma}\xi_r^{s,t,0}v_{\sigma(1)}\dots v_{\sigma(r-1)}v_{\sigma(r)}\ox o(u_1\dots u_s)\ox v_{\sigma(r+1)}\dots v_{\sigma(t)}\\
-\sum_{r,\sigma}\xi_r^{s,t,0}v_{\sigma(1)}\dots v_{\sigma(r-1)}\ox o(u_1\dots u_s)\ox v_{\sigma(r)}v_{\sigma(r+1)}\dots v_{\sigma(t)}
\ea
$$
\begin{equation}\label{1}
\ba{l}
=t!\xi_{t}^{s,t,0}a\ox o(u_1\dots u_s)\ox 1-t!\xi_{1}^{s,t,0}1\ox o(u_1\dots u_s)\ox a\\
+\sum_{r,\sigma,l}(-1)^{l}\xi_r^{s,t,0}v_{\sigma(1)}\dots v_{\sigma(r-1)}u_l\ox o(v_{\sigma(r)},u_1\dots\hat{u_l}\dots u_s)\ox v_{\sigma(r+1)}\dots v_{\sigma(t)}\\
-\sum_{r,\sigma,l}(-1)^{l}\xi_r^{s,t,0}v_{\sigma(1)}\dots v_{\sigma(r-1)}\ox o(v_{\sigma(r)},u_1\dots\hat{u_l}\dots u_s)\ox u_lv_{\sigma(r+1)}\dots v_{\sigma(t)}\\
+\sum_{r\neq t,\sigma}(\xi_r^{s,t,0}-\xi_{r+1}^{s,t,0})v_{\sigma(1)}\dots v_{\sigma(r)}\ox o(u_1\dots u_s)\ox v_{\sigma(r+1)}\dots v_{\sigma(t)}.
\ea
\end{equation}
\par

On the other hand,
$$
\ba{l}
\phi d(1\ox o(u_1\dots u_s)\ox v_1\dots v_t\ox 1)\\
=\phi(\sum_l (-1)^{l-1}u_l\ox o(u_1\dots \hat{u_l}\dots u_s)\ox v_1\dots v_t\ox 1)\\
-\phi(\sum_l(-1)^{l-1}\ox o(u_1\dots \hat{u_l}\dots u_s)\ox u_lv_1\dots v_t\ox 1)\\
\\
=\phi(\sum_l (-1)^{l-1}u_l\ox o(u_1\dots \hat{u_l}\dots u_s)\ox v_1\dots v_t\ox 1)\\
-\phi(\sum_l(-1)^{l-1}\ox o(u_1\dots \hat{u_l}\dots u_s)\ox v_1\dots v_tv_{t+1}\ox 1)\ \ \ \mathrm{where\ }v_{t+1}=u_l\\
\\
=\sum_{r,\sigma,l}(-1)^{l-1}\xi_r^{s-1,t,0}v_{\sigma(1)}\dots v_{\sigma(r-1)}u_l\ox o(v_{\sigma(r)},u_1\dots\hat{u_l}\dots u_s)\ox v_{\sigma(r+1)}\dots v_{\sigma(t)}\\
-\sum_{\mu\leq t+1,\xi\in S_{t+1},l}(-1)^{l-1}\xi_r^{s-1,t+1,0}v_{\xi(1)}\dots v_{\xi(\mu-1)}\ox o(v_{\xi(\mu)},u_1\dots\hat{u_l}\dots u_s)\ox v_{\xi(\mu+1)}\dots v_{\sigma(t)}v_{\xi(t+1)}\\
\\
=\sum_{r,\sigma,l}(-1)^{l-1}\xi_r^{s-1,t,0}v_{\sigma(1)}\dots v_{\sigma(r-1)}u_l\ox o(v_{\sigma(r)},u_1\dots\hat{u_l}\dots u_s)\ox v_{\sigma(r+1)}\dots v_{\sigma(t)}\\
-\sum_{1\leq r\leq t,\sigma\in S_{t},l} (-1)^{l-1}r\xi_{r+1}^{s-1,t+1,0}v_{\sigma(1)}\dots v_{\sigma(r-1)}u_l\ox o(v_{\sigma(r)},u_1\dots\hat{u_l}\dots u_s)\ox v_{\sigma(r+1)}\dots v_{\sigma(t)}\\
-\sum_{1\leq r\leq t,\sigma\in S_{t},l}(-1)^{l-1}(t-r+1)\xi_{r}^{s-1,t+1,0}v_{\sigma(1)}\dots v_{\sigma(r-1)}\ox o(v_{\sigma(r)},u_1\dots\hat{u_l}\dots u_s)\ox u_lv_{\sigma(r+1)}\dots v_{\sigma(t)}\\
-\sum_{0\leq r\leq t,\sigma\in S_{t},l}s\xi_{r+1}^{s-1,t+1,0}v_{\sigma(1)}\dots v_{\sigma(r)}\ox o(u_1\dots u_s)\ox v_{\sigma(r+1)}\dots v_{\sigma(t)}
\ea
$$

\begin{equation}\label{2}
\ba{l}
=-t!s\xi_{t+1}^{s-1,t+1,0}a\ox o(u_1\dots u_s)\ox 1-t!s\xi_1^{s-1,t+1,0}1\ox o(u_1\dots u_s)\ox a\\
+\sum_{r,\sigma,l}(-1)^{l-1}(\xi_r^{s-1,t,0}-r\xi_{r+1}^{s-1,t+1,0})v_{\sigma(1)}\dots v_{\sigma(r-1)}u_l\ox o(v_{\sigma(r)},u_1\dots\hat{u_l}\dots u_s)\ox v_{\sigma(r+1)}\dots v_{\sigma(t)}\\
-\sum_{r,\sigma,l}(-1)^{l-1}(t-r+1)\xi_{r}^{s-1,t+1,0}v_{\sigma(1)}\dots v_{\sigma(r-1)}\ox o(v_{\sigma(r)},u_1\dots\hat{u_l}\dots u_s)\ox u_lv_{\sigma(r+1)}\dots v_{\sigma(t)}\\
-\sum_{r\neq t,\sigma}s\xi_{r+1}^{s-1,t+1,0}v_{\sigma(1)}\dots v_{\sigma(r)}\ox o(u_1\dots u_s)\ox v_{\sigma(r+1)}\dots v_{\sigma(t)},\\
\ea
\end{equation}
where the final expressions are indexed over $r\in \underline{t},\sigma\in S_t,l\in\underline{s}$.
\par

By matching the coefficients for (\ref{1}) and (\ref{2}) we see that the equation $d\phi+\phi d=-id_K\ox \mu$ demands a number of equations:
\begin{enumerate}
\item[lEQ 1:] $\xi_{t}^{s,t,0}-s\xi_{t+1}^{s-1,t+1,0}=0$.
\item[lEQ 2:] $\xi_{1}^{s,t,0}+s\xi_1^{s-1,t+1,0}=\frac{1}{t!}$.
\item[lEQ 3:] $\xi_r^{s,t,0}=(\xi_r^{s-1,t,0}-r\xi_{r+1}^{s-1,t+1,0})$.
\item[lEQ 4:] $\xi_r^{s,t,0}=(t-r+1)\xi_{r}^{s-1,t+1,0}$.
\item[lEQ 5:] $(\xi_r^{s,t,0}-\xi_{r+1}^{s,t,0})=s\xi_{r+1}^{s-1,t+1,0}$ when $r\neq t$.
\end{enumerate}
Note that when the $u_i$ and $v_i$ are linearly independent, the $\xi_r^{s,t,0}$ will be specified uniquely by lEQ 3.  If the above $5$ equations are satisfied then we will have $(\ref{1})+(\ref{2})=-id\ox \mu$.

\begin{lemma}
The coefficients $\xi_r^{s,t,0}$ satisfy each of the equations \rm{lEQ} $i$.
\end{lemma}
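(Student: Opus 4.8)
The plan is to substitute the explicit formula~(\ref{0}), which for $z=0$ reads $\xi^{s,t,0}_r=\frac{(t-r+s)!}{(t-r)!(s+t)!}$, into each of the five equations lEQ 1--lEQ 5 and to reduce each one to an elementary factorial identity. It is convenient first to record the two index shifts that occur: $\xi^{s-1,t,0}_r=\frac{(t-r+s-1)!}{(t-r)!(s+t-1)!}$, while $\xi^{s-1,t+1,0}_r=\frac{(t-r+s)!}{(t-r+1)!(s+t)!}$ and $\xi^{s-1,t+1,0}_{r+1}=\frac{(t-r+s-1)!}{(t-r)!(s+t)!}$ both keep the denominator $(s+t)!$; this is what makes the cancellations clean.

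First I would dispose of the two boundary identities lEQ 1 and lEQ 2. For lEQ 1 one has $t-r=0$, so that $\xi^{s,t,0}_t=s!/(s+t)!$ and $\xi^{s-1,t+1,0}_{t+1}=(s-1)!/(s+t)!$, whence $s\,\xi^{s-1,t+1,0}_{t+1}=\xi^{s,t,0}_t$. For lEQ 2 the left-hand side factors as $\frac{(t+s-1)!}{(s+t)!}\bigl(\frac{1}{(t-1)!}+\frac{s}{t!}\bigr)$, and the elementary identity $\frac{1}{(a-1)!}+\frac{b}{a!}=\frac{a+b}{a!}$ (with $a=t$ and $b=s$) turns this into $\frac{(t+s-1)!}{(s+t)!}\cdot\frac{t+s}{t!}=\frac{1}{t!}$.

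Next I would handle lEQ 3, lEQ 4 and lEQ 5, each of which becomes a one-line check after factoring out the obvious common term. For lEQ 4 this is immediate: $(t-r+1)\,\xi^{s-1,t+1,0}_r=(t-r+1)\cdot\frac{(t-r+s)!}{(t-r+1)!(s+t)!}=\xi^{s,t,0}_r$. For lEQ 3 the right-hand side is $\frac{(t-r+s-1)!}{(t-r)!}\bigl(\frac{1}{(s+t-1)!}-\frac{r}{(s+t)!}\bigr)=\frac{(t-r+s-1)!}{(t-r)!}\cdot\frac{s+t-r}{(s+t)!}=\xi^{s,t,0}_r$. For lEQ 5, where the hypothesis $r\neq t$ makes $(t-r-1)!$ meaningful, the left-hand difference telescopes to $\frac{(t-r+s-1)!}{(t-r-1)!(s+t)!}\bigl(\frac{t-r+s}{t-r}-1\bigr)=\frac{s\,(t-r+s-1)!}{(t-r)!(s+t)!}=s\,\xi^{s-1,t+1,0}_{r+1}$.

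There is no genuine obstacle here: every step is a manipulation of quotients of factorials. The only points needing a little care are the bookkeeping of the shifts $(s,t,0)\mapsto(s-1,t,0)$ and $(s,t,0)\mapsto(s-1,t+1,0)$ in both the sub- and the superscripts, and checking that the degenerate values --- $t-r=0$ in lEQ 1, the exclusion $r=t$ in lEQ 5, and the convention that a sum over an empty index set is $0$ --- never produce an ill-defined expression. Once lEQ 1--lEQ 5 are verified, they combine with the coefficient comparison of~(\ref{1}) and~(\ref{2}) carried out above to give $(\ref{1})+(\ref{2})=-\,\id\ox\mu$, which is exactly the homotopy identity $d_K\phi+\phi d_{K\ox_A K}=F_K$ on $K_{>0}\ox_A K_0$; the analogous equations for the coefficients $\xi^{0,t,z}_r$ settle $K_0\ox_A K_{>0}$ in the same way.
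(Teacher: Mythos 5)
Your proof is correct and follows essentially the same route as the paper: substitute the closed form $\xi^{s,t,0}_r=\frac{(t-r+s)!}{(t-r)!(s+t)!}$ into each of lEQ 1--5 and verify by elementary factorial cancellation, with the same index-shift bookkeeping. All five verifications check out as written.
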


\begin{proof}
We simply check each equation.
\par
$$
\frac{(t-r+s)!}{(t-r)!(s+t)!}
$$
For lEQ $1$:
$$
\ba{rll}
\xi_{t}^{s,t,0}-s\xi_{t+1}^{s-1,t+1,0}&=\frac{s!}{(s+t)!}-s\frac{(s-1)!}{(s+t)!}\\
&=\frac{s!}{(s+t)!}-\frac{s!}{(s+t)!}\\
&=0. & \surd
\ea
$$
For lEQ $2$:
$$
\ba{rll}
\xi_{1}^{s,t,0}+s\xi_1^{s-1,t+1,0}&=\frac{(t-1+s)!}{(t-1)!(s+t)!}+s\frac{(t+s-1)!}{t!(s+t)!}\\
&=\frac{(t+s)(t+s-1)!}{t!(s+t)!}=\frac{(t+s)!}{t!(s+t)!}= \frac{1}{t!}. &\surd
\ea
$$
For lEQ $3$:
$$
\ba{rll}
\xi_r^{s-1,t,0}-r\xi_{r+1}^{s-1,t+1,0}&=\frac{(t-r+s-1)!}{(t-r)!(s+t-1)!}-r\frac{(t-r+s-1)!}{(t-r)!(s+t)!}\\
&=\frac{(s+t-r)(t-r+s-1)!}{(t-r)!(s+t)!}=\frac{(s+t-r)!}{(t-r)!(s+t)!}=\xi_r^{s,t,0}. & \surd
\ea
$$
For lEQ $4$:
$$
(t-r+1)\xi_{r}^{s-1,t+1,0}=(t-r+1)\frac{(s+t-r)!}{(t-r+1)!(s+t)!}=\frac{(s+t-r)!}{(t-r)!(s+t)!}=\xi_r^{s,t,0}.\ \ \surd
$$
For lEQ $5$:
$$
\ba{rll}
\xi_r^{s,t,0}-\xi_{r+1}^{s,t,0} &=\frac{(s+t-r)!}{(t-r)!(s+t)!}-\frac{(s+t-r-1)!}{(t-r-1)!(s+t)!}\\
&=\frac{(s+t-r)!-(t-r)(s+t-r-1)!}{(t-r)!(s+t)!}\\
&=\frac{(s+t-r)(s+t-r-1)!-(t-r)(s+t-r-1)!}{(t-r)!(s+t)!}\\
&=\frac{s(s+t-r)!}{(t-r)!(s+t)!}\\
&=s\xi_{r+1}^{s-1,t+1,0}. & \surd
\ea
$$
\end{proof}

\begin{corollary}
The equation
$$
d\phi-\phi d=\mu\ox id-id\ox \mu=F_K
$$
on all of $K\ox_A K_0$.
\end{corollary}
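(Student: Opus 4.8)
The plan is to obtain the corollary by combining the two computations just performed, treating separately the degree-zero summand and the positive-degree summands in the decomposition $K\ox_A K_0=\bigoplus_{s\geq 0}K_s\ox_A K_0$; the target is the identity $d_K\phi+\phi d_{K\ox_A K}=F_K$ of Lemma~\ref{lemma:phimap}, now restricted to this portion of $K\ox_A K$.

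On $K_0\ox_A K_0$ the differential $d_{K\ox_A K}$ is zero, so the left-hand side collapses to $d\phi_0$, and the identification $K_0\ox_A K_0\cong A\ox A\ox A$ together with the degree-zero computation already made in the proof of Lemma~\ref{lemma:phimap} gives $d\phi_0(1\ox a\ox 1)=a\ox 1-1\ox a$, that is $d\phi_0=\mu\ox\id-\id\ox\mu=F_K$ there. On $K_s\ox_A K_0$ with $s>0$ the augmentation $\mu\colon K_s\to A$ vanishes, so $F_K$ restricts to $-\,\id_K\ox\mu$ and the assertion on this summand is $d\phi+\phi d=-\,\id_K\ox\mu$. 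I would verify this by evaluating $d\phi$ and $\phi d$ on the free $A^e$-generator $1\ox o(u_1,\dots,u_s)\ox v_1\cdots v_t\ox 1$, obtaining the two expansions (\ref{1}) and (\ref{2}); adding them and comparing coefficients of the resulting basis monomials shows that it suffices to check the five scalar relations lEQ~1--lEQ~5 among the rationals $\xi_r^{s,t,0}$, and these hold for $\xi_r^{s,t,0}=(t-r+s)!/\big((t-r)!\,(s+t)!\big)$ by the lemma proved immediately before the corollary. Putting the two cases together yields $d_K\phi+\phi d_{K\ox_A K}=F_K$ on all of $K\ox_A K_0$.

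The step that requires real care is the derivation of (\ref{2}), the evaluation of $\phi d$ on the generator above. Applying $d$ first produces terms such as $1\ox o(u_1,\dots,\hat u_l,\dots,u_s)\ox u_l v_1\cdots v_t\ox 1$ in which the middle symmetric slot now holds $t+1$ vectors, so the subsequent application of $\phi$ is a sum over $S_{t+1}$ rather than over $S_t$. Re-indexing that sum over $S_t$ — by recording into which of the $t+1$ positions the extra vector $u_l$ has been inserted, and how the wedge factor $o(v_{\sigma(r)},u_1,\dots,\hat u_l,\dots,u_s)$ splits off — is what generates the weights $r$, $t-r+1$ and $s$ appearing in (\ref{2}), and the signs $(-1)^{l-1}$ must be carried carefully through this bookkeeping. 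Once (\ref{1}) and (\ref{2}) are correctly in hand the matching of coefficients is purely mechanical, and the factorial identities lEQ~1--5 (together with the convention that a sum over an empty index set is $0$, which handles the degenerate cases) close the argument.
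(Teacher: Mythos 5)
Your proposal is correct and follows essentially the same route as the paper: the degree-zero case is handled by the explicit computation of $d\phi_0$, and on $K_{>0}\ox_A K_0$ the identity reduces to $(\ref{1})+(\ref{2})=-\id_K\ox\mu$ (since $\mu\ox\id$ vanishes there), which is exactly what the scalar relations lEQ~1--5 for the coefficients $\xi_r^{s,t,0}$ guarantee. Your added care about re-indexing the $S_{t+1}$-sum in the derivation of (\ref{2}) is the same bookkeeping the paper carries out in arriving at that display.
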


\begin{proof}
We already know that the equation is satisfied on $K_0\ox_A K_0$, and the satisfaction of the above equations lEQ 1-5 ensures that
$$
-id\ox \mu=(\ref{1})+(\ref{2})=(d\phi+\phi d)|K_{>0}\ox_A K_0.
$$
Since $\mu\ox id|K_{>0}\ox_A K_0$, this gives the equality
$$
d\phi-\phi d=\mu\ox id-id\ox \mu
$$
on all of $K\ox_A K_0$.
\end{proof}

Let $z\geq 1$.  We now address the proposed equality $d\phi+\phi d=F_K$ on $K_0\ox K_{>0}$.  We have
$$
\ba{l}
d\phi(1\ox v_1\dots v_t\ox o(w_1\dots w_z)\ox 1)\\
d(\sum_{r,\sigma}(-1)^{z}\xi_r^{0,t,z}v_{\sigma(1)}\dots v_{\sigma(r-1)}\ox o(w_1\dots w_z,v_{\sigma(r)})\ox v_{\sigma(r+1)}\dots v_{\sigma(t)})\\
\\
= \sum_{r,\sigma,l}(-1)^{z+l-1}\xi_r^{0,t,z}v_{\sigma(1)}\dots v_{\sigma(r-1)}w_l\ox o(w_1\dots\hat{w_l}\dots w_z,v_{\sigma(r)})\ox v_{\sigma(r+1)}\dots v_{\sigma(t)}\\
-\sum_{r,\sigma,l}(-1)^{z+l-1}\xi_r^{0,t,z}v_{\sigma(1)}\dots v_{\sigma(r-1)}\ox o(w_1\dots\hat{w_l}\dots w_z,v_{\sigma(r)})\ox w_lv_{\sigma(r+1)}\dots v_{\sigma(t)}\\
+\sum_{r,\sigma,l}\xi_r^{0,t,z}v_{\sigma(1)}\dots v_{\sigma(r-1)}v_{\sigma(r)}\ox o(w_1\dots w_z)\ox v_{\sigma(r+1)}\dots v_{\sigma(t)}\\
-\sum_{r,\sigma,l}\xi_r^{0,t,z}v_{\sigma(1)}\dots v_{\sigma(r-1)}\ox o(w_1\dots w_z)\ox v_{\sigma(r)}v_{\sigma(r+1)}\dots v_{\sigma(t)}
\ea
$$

\begin{equation}\label{3}
\ba{l}
=t!\xi_t^{0,t,z}a\ox o(w_1\dots w_z)\ox 1-t!\xi_1^{0,t,z}1\ox o(w_1,\dots, w_z)\ox a\\
+\sum_{r,\sigma,l}(-1)^{z+l-1}\xi_r^{0,t,z}v_{\sigma(1)}\dots v_{\sigma(r-1)}w_l\ox o(w_1\dots\hat{w_l}\dots w_z,v_{\sigma(r)})\ox v_{\sigma(r+1)}\dots v_{\sigma(t)}\\
-\sum_{r,\sigma,l}(-1)^{z+l-1}\xi_r^{0,t,z}v_{\sigma(1)}\dots v_{\sigma(r-1)}\ox o(w_1\dots\hat{w_l}\dots w_z,v_{\sigma(r)})\ox w_lv_{\sigma(r+1)}\dots v_{\sigma(t)}\\
+\sum_{r\neq 1,\sigma,l}(\xi_{r-1}^{0,t,z}-\xi_r^{0,t,z})v_{\sigma(1)}\dots v_{\sigma(r-1)}\ox o(w_1\dots w_z)\ox v_{\sigma(r)}\dots v_{\sigma(t)}.
\ea
\end{equation}
\par

On the other hand we have
$$
\ba{l}
\phi d(1\ox v_1\dots v_t\ox o(w_1\dots w_z)\ox 1)\\
=\phi(\sum_l(-1)^{l-1}v_1\dots v_tv_{t+1}\ox o(w_1\dots \hat{w_l}\dots w_z)\ox 1)\ \ \mathrm{where\ }v_{t+1}=w_l\\
-\phi(\sum_l(-1)^{l-1}v_1\dots v_t\ox o(w_1\dots \hat{w_l}\dots w_z)\ox w_l)\\
\\
=\sum_{r,\sigma, l}(-1)^{z+l}r\xi_{r+1}^{0,t+1,z-1}v_{\sigma(1)}\dots v_{\sigma(r-1)}w_l\ox o(w_1\dots\hat{w_l}\dots w_z,v_{\sigma(r)})\ox v_{\sigma(r+1)}\dots v_{\sigma(t)}\\
+\sum_{r,\sigma, l}(-1)^{z+l}(t-r+1)\xi_{r}^{0,t+1,z-1}v_{\sigma(1)}\dots v_{\sigma(r-1)}\ox o(w_1\dots\hat{w_l}\dots w_z,v_{\sigma(r)})\ox w_lv_{\sigma(r+1)}\dots v_{\sigma(t)}\\
+\sum_{r\leq t+1,\sigma}z\xi_r^{0,t+1,z-1}v_{\sigma(1)}\dots v_{\sigma(r-1)}\ox o(w_1\dots w_z)\ox v_{\sigma(r)}\dots v_{\sigma(t)}\\
-\sum_{r,\sigma,l}(-1)^{z+l}\xi_r^{0,t,z-1}v_{\sigma(1)}\dots v_{\sigma(r-1)}\ox o(w_1\dots \hat{w_l}\dots w_z,v_{\sigma(r)})\ox w_lv_{\sigma(r+1)}\dots v_{\sigma(t)}\\
\ea
$$
\begin{equation}\label{4}
\ba{l}
=t!z\xi_{t+1}^{0,t+1,z-1}a\ox o(w_1\dots w_z)\ox 1+t!z\xi_{1}^{0,t+1,z-1}1\ox o(w_1\dots w_z)\ox a\\
+\sum_{r,\sigma, l}(-1)^{z+l}r\xi_{r+1}^{0,t+1,z-1}v_{\sigma(1)}\dots v_{\sigma(r-1)}w_l\ox o(w_1\dots\hat{w_l}\dots w_z,v_{\sigma(r)})\ox v_{\sigma(r+1)}\dots v_{\sigma(t)}\\
-\sum_{r,\sigma, l}(-1)^{z+l}(\xi_r^{0,t,z-1}-(t-r+1)\xi_{r}^{0,t+1,z-1}) v_{\sigma(1)}\dots v_{\sigma(r-1)}\ox o(w_1\dots\hat{w_l}\dots w_z,v_{\sigma(r)})\ox w_lv_{\sigma(r+1)}\dots v_{\sigma(t)}\\
+\sum_{r\neq 1,\sigma}z\xi_r^{0,t+1,z-1}v_{\sigma(1)}\dots v_{\sigma(r-1)}\ox o(w_1\dots w_z)\ox v_{\sigma(r)}\dots v_{\sigma(t)}
\ea
\end{equation}

We will have $(\ref{3})+(\ref{4})=\mu\ox id$ if the $\xi_r^{0,t,z}$ satisfy the following equations:
\begin{enumerate}
\item[rEQ 1:] $\xi_t^{0,t,z}+z\xi_{t+1}^{0,t+1,z-1}=\frac{1}{t}$.
\item[rEQ 2:] $\xi_1^{0,t,z}-z\xi_1^{0,t+1,z-1}=0$.
\item[rEQ 3:] $r\xi_{r+1}^{0,t+1,z-1}=\xi_r^{0,t,z}$.
\item[rEQ 4:] $\xi_r^{0,t,z-1}-(t-r+1)\xi_r^{0,t+1,z-1}=\xi_r^{0,t,z}$.
\item[rEQ 5:] $\xi_{r-1}^{0,t,z}-\xi_r^{0,t,z}=-z\xi_r^{0,t+1,z-1}$.
\end{enumerate}
Indeed, if $\dim V$ is sufficiently large, rEQ 3 will specify the $\xi_r^{0,t,z}$ from the lower degree terms $\xi_{r+1}^{0,t+1,z-1}$.

\begin{lemma}
The coefficients $\xi_r^{0,t,z}$ satisfy all of the above equations {\rm rEQ} i.
\end{lemma}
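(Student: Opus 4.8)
The plan is to verify each of the five identities rEQ~1--rEQ~5 directly, exactly as was done above for lEQ~1--lEQ~5. Specializing the closed form~(\ref{0}) to $s=0$ gives
\[
\xi_r^{0,t,z}=\frac{(r+z-1)!}{(r-1)!\,(t+z)!},
\]
and substituting the relevant index shifts supplies the other terms occurring in rEQ~1--rEQ~5, namely $\xi_{r+1}^{0,t+1,z-1}=\frac{(r+z-1)!}{r!\,(t+z)!}$, $\xi_r^{0,t+1,z-1}=\frac{(r+z-2)!}{(r-1)!\,(t+z)!}$, $\xi_r^{0,t,z-1}=\frac{(r+z-2)!}{(r-1)!\,(t+z-1)!}$, and $\xi_{r-1}^{0,t,z}=\frac{(r+z-2)!}{(r-2)!\,(t+z)!}$. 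Each identity then collapses to an elementary manipulation of factorials: rEQ~3 is the cancellation $r\cdot\frac{(r+z-1)!}{r!\,(t+z)!}=\frac{(r+z-1)!}{(r-1)!\,(t+z)!}$; rEQ~2 reads $\frac{z!}{(t+z)!}-z\cdot\frac{(z-1)!}{(t+z)!}=0$; rEQ~1, whose right-hand side should be read as $1/t!$ to match lEQ~2, follows from $t\,(t+z-1)!+z\,(t+z-1)!=(t+z)!$; and rEQ~4 and rEQ~5 have the same flavour, resting on $(t+z)-(t-r+1)=r+z-1$ and $(r-1)-(r+z-1)=-z$ respectively.

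Concretely, I would write the proof as one short displayed computation per equation, in the same two-column layout already used for the $\xi_r^{s,t,0}$ lemma: expand every $\xi$ on the left-hand side into factorials, put everything over the common denominator $(t+z)!$ (or $(t+z-1)!$ when a degree-$(z-1)$ coefficient forces it), simplify the numerator using one of the arithmetic facts above, and read off the right-hand side. Nothing beyond elementary algebra of factorials is needed, and no auxiliary lemma; the argument is the verbatim analogue of the preceding lemma under the replacements $(t-r+s)!/(t-r)!\to(r+z-1)!/(r-1)!$ and $(s+t)!\to(t+z)!$, with $s$ and $z$ exchanging roles.

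I do not anticipate a genuine obstacle. The one point that needs care is keeping straight, term by term across rEQ~1--rEQ~5, which of the arguments $t$, $z$, $r$ has been shifted and by how much, since a single misread shift propagates through the bookkeeping. Once the lemma is in place, one concludes exactly as in the $K\ot_A K_0$ case: satisfaction of rEQ~1--rEQ~5 forces $(\ref{3})+(\ref{4})=\mu\ot\id$ on $K_0\ot_A K_{>0}$, which together with the degree-$0$ identity $d\phi_0=\mu\ot\id-\id\ot\mu$ and the already-established equality on $K\ot_A K_0$ yields $d_K\phi+\phi\,d_{K\ot_A K}=F_K$ on all of $(K\ot_A K_0)+(K_0\ot_A K)$, leaving only the mixed case $s,z>0$ for the remainder of the appendix.
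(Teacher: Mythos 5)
Your proposal is correct and follows essentially the same route as the paper: substitute the closed form $\xi_r^{0,t,z}=\frac{(r+z-1)!}{(r-1)!\,(t+z)!}$ into each of rEQ~1--5 and reduce to an elementary factorial identity, exactly the computations the paper displays. You are also right that the stated right-hand side of rEQ~1 should be $1/t!$ rather than $1/t$ (the paper's own verification produces $1/t!$, consistent with the coefficient $t!$ in displays (\ref{3}) and (\ref{4})).
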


\begin{proof}
For rEQ 1:
$$
\xi_t^{0,t,z}+z\xi_{t+1}^{0,t+1,z-1}=\frac{(t+z-1)!}{(t-1)!(t+z)!}+z\frac{(t+z-1)!}{t!(t+z)!}=\frac{(t+z)(t+z-1)!}{t!(t+z)!}=\frac{1}{t!}.\ \ \ \surd
$$
For rEQ 2:
$$
\xi_1^{0,t,z}-z\xi_1^{0,t+1,z-1}=\frac{z!}{(t+z)!}-z\frac{(z-1)!}{(t+z)!}=0.\ \ \ \surd
$$
For rEQ 3:
$$
r\xi_{r+1}^{0,t+1,z-1}=r\frac{(r+z-1)!}{r!(t+z)!}=\frac{(r+z-1)!}{(r-1)!(t+z)!}=\xi_r^{0,t,z}.\ \ \ \surd
$$
For rEQ 4:
$$
\ba{rl}
\xi_r^{0,t,z-1}-(t-r+1)\xi_r^{0,t+1,z-1}=\frac{(r+z-2)!}{(r-1)!(t+z-1)!}-(t-r+1)\frac{(r+z-2)!}{(r-1)!(t+z)!}=\frac{(z+r-1)(r+z-2)!}{(r-1)!(t+z)!}=\xi_r^{0,t,z}.\ \ \ \surd
\ea
$$
For rEQ 5:
$$
\ba{rl}
\xi_{r-1}^{0,t,z}-\xi_r^{0,t,z}=\frac{(r+z-2)!}{(r-2)!(t+z)!}-\frac{(r+z-1)!}{(r-1)!(t+1)!}=\frac{(r-1-r-z-1)(r+z-2)!}{(r-1)!(t+z)!}=-z\frac{(r+z-2)!}{(r-1)!(t+z)!}=-z\xi_r^{0,t+1,z-1}.\ \ \ \surd
\ea
$$
\end{proof}

We get again the desired property.

\begin{corollary}
The equation
$$
d\phi-\phi d=\mu\ox id-id\ox \mu=F_K
$$
on all of $K_0\ox_A K$.
\end{corollary}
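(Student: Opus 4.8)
My plan is to mirror, almost verbatim, the argument just given for $K\ox_A K_0$, assembling pieces that are already in place. First I would note that the claimed identity has already been verified on the base term $K_0\ox_A K_0$: there $\phi_0$ was normalized precisely so that $d\phi_0=\mu\ox\id-\id\ox\mu$, and since the internal differential on $K\ox_A K$ vanishes on $K_0\ox_A K_0$ there is no $\phi d$ contribution in that bidegree. Hence only the case $K_0\ox_A K_{>0}$ remains to be treated.

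Next I would observe that on $K_0\ox_A K_{>0}$ the augmentation $\mu\colon K\to A$ annihilates the second tensor factor, so $\id_K\ox\mu$ is identically zero there and $F_K$ restricts to $\mu\ox\id_K$. Thus it suffices to check that $d\phi+\phi d$ agrees with $\mu\ox\id_K$ on $K_0\ox_A K_{>0}$, and this is exactly the content of the long computation carried out above. Applied to a basis element $1\ox v_1\cdots v_t\ox o(w_1,\dots,w_z)\ox 1$, the term $d\phi$ is computed in (\ref{3}) and the term $\phi d$ in (\ref{4}); matching like monomials in (\ref{3}) and (\ref{4}) produces the five scalar identities rEQ 1--5, and the lemma immediately preceding this corollary verifies that the coefficients $\xi_r^{0,t,z}$ of (\ref{0}) satisfy all of them. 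Under those relations $(\ref{3})+(\ref{4})$ collapses to $\mu\ox\id$, which is what we want; together with the base case $K_0\ox_A K_0$ this gives the equality on all of $K_0\ox_A K$.

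The only step that genuinely demands care is the bookkeeping behind rEQ 1--5, namely being certain that the passage from (\ref{3}) and (\ref{4}) to those five identities partitions the monomials that occur without double counting or omission --- the two boundary families $a\ox o(w_1,\dots,w_z)\ox 1$ and $1\ox o(w_1,\dots,w_z)\ox a$, the two families in which some $w_l$ has been slid across a tensor slot, and the family with $o(w_1,\dots,w_z)$ left intact. That combinatorial matching is precisely what the preceding lemma discharges, so beyond it I expect no real obstacle; the corollary is then a two-line consequence of the degree-$0$ normalization together with that lemma.
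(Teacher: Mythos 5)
Your proof is correct and follows essentially the same route as the paper: the degree-zero case is handled by the normalization of $\phi_0$, and on $K_0\ox_A K_{>0}$ the identity reduces (since $\id_K\ox\mu$ vanishes there) to showing $(\ref{3})+(\ref{4})=\mu\ox\id$, which is exactly what the relations rEQ 1--5, verified in the preceding lemma, deliver. No gaps.
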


\begin{proof}
The equations rEQ 1-5 imply the equality
$$
(d\phi+\phi d)|K_0\ox_A K_{>0}=(\ref{3})+(\ref{4})=\mu\ox 1=F_K|K_0\ox_A K_{>0}.
$$
We have already checked the equality in degree $0$.
\end{proof}

Now we assume $s>0$ and $z>0$ and consider a monomial
\[
1\ox o(u_1,\dots, u_s)\ox v_1\dots v_t\ox o(w_1,\dots, w_z)\ox 1.
\]
For $d\phi$ we have
\[
\ba{l}
d\phi(1\ox o(u_1,\dots, u_s)\ox v_1\dots v_t\ox o(w_1,\dots, w_z)\ox 1)\\
=\sum_{\sigma\in S_t,\ r\in \underline{t}} (-1)^{sz+z}\xi_r^{s,t,z}d(v_{\sigma(1)}\dots v_{\sigma(r-1)}\ox o(w_1,\dots, w_z,v_{\sigma(r)},u_1,\dots, u_s)\ox v_{\sigma(r+1)}\dots v_{\sigma(t)})\\
=-\sum_{\sigma,r\in \underline{t},l\in \unl{z}}(-1)^{sz+z+l}\xi_r^{s,t,z}v_{\sigma(1)}\dots v_{\sigma(r-1)}w_l\ox o(w_1,\dots,\hat{w_l},\dots,w_z,v_{\sigma(r)},u_1,\dots, u_s)\ox v_{\sigma(r+1)}\dots v_{\sigma(t)}\\
+\sum_{\sigma,r\in \underline{t},l\in \unl{z}} (-1)^{sz+z+l}\xi_r^{s,t,z}v_{\sigma(1)}\dots v_{\sigma(r-1)}\ox o(w_1,\dots,\hat{w_l},\dots,w_z,v_{\sigma(r)},u_1,\dots, u_s)\ox w_lv_{\sigma(r+1)}\dots v_{\sigma(t)})\\
+\sum_{\sigma,r\in \underline{t},l\in \unl{s}}(-1)^{sz+z+l+z}\xi_r^{s,t,z}v_{\sigma(1)}\dots v_{\sigma(r-1)}u_l\ox o(w_1,\dots,w_z,v_{\sigma(r)},u_1,\dots,\hat{u_l},\dots,u_s)\ox v_{\sigma(r+1)}\dots v_{\sigma(t)}\\
-\sum_{\sigma,r\in \underline{t},l\in \unl{s}}(-1)^{sz+z+l+z}\xi_r^{s,t,z}v_{\sigma(1)}\dots v_{\sigma(r-1)}\ox o(w_1,\dots,w_z,v_{\sigma(r)},u_1,\dots,\hat{u_l},\dots,u_s)\ox u_lv_{\sigma(r+1)}\dots v_{\sigma(t)}\\
+\sum_{\sigma,r\in \underline{t}}(-1)^{sz}\xi_r^{s,t,z}v_{\sigma(1)}\dots v_{\sigma(r)}\ox o(w_1,\dots,w_z,u_1,\dots,u_s)\ox v_{\sigma(r+1)}\dots v_{\sigma(t)}\\
-\sum_{\sigma,r\in \underline{t}}(-1)^{sz}\xi_r^{s,t,z}v_{\sigma(1)}\dots v_{\sigma(r-1)}\ox o(w_1,\dots,w_z,u_1,\dots,u_s)\ox v_{\sigma(r)}\dots v_{\sigma(t)}
\ea
\]
\begin{equation}\label{eq:finals1}
\ba{l}
=-\sum_{\sigma,r\in \underline{t},l\in \unl{z}}(-1)^{sz+z+l}\xi_r^{s,t,z}v_{\sigma(1)}\dots v_{\sigma(r-1)}w_l\ox o(w_1,\dots,\hat{w_l},\dots,w_z,v_{\sigma(r)},u_1,\dots, u_s)\ox v_{\sigma(r+1)}\dots v_{\sigma(t)}\\
+\sum_{\sigma,r\in \underline{t},l\in \unl{z}} (-1)^{sz+z+l}\xi_r^{s,t,z}v_{\sigma(1)}\dots v_{\sigma(r-1)}\ox o(w_1,\dots,\hat{w_l},\dots,w_z,v_{\sigma(r)},u_1,\dots, u_s)\ox w_lv_{\sigma(r+1)}\dots v_{\sigma(t)})\\
+\sum_{\sigma,r\in \underline{t},l\in \unl{s}}(-1)^{sz+l}\xi_r^{s,t,z}v_{\sigma(1)}\dots v_{\sigma(r-1)}u_l\ox o(w_1,\dots,w_z,v_{\sigma(r)},u_1,\dots,\hat{u_l},\dots,u_s)\ox v_{\sigma(r+1)}\dots v_{\sigma(t)}\\
-\sum_{\sigma,r\in \underline{t},l\in \unl{s}}(-1)^{sz+l}\xi_r^{s,t,z}v_{\sigma(1)}\dots v_{\sigma(r-1)}\ox o(w_1,\dots,w_z,v_{\sigma(r)},u_1,\dots,\hat{u_l},\dots,u_s)\ox u_lv_{\sigma(r+1)}\dots v_{\sigma(t)}\\
+(-1)^{sz}t!\xi_t^{s,t,z}v_{1}\dots v_{t}\ox o(w_1,\dots,w_z,u_1,\dots,u_s)\ox 1\\
-(-1)^{sz}t!\xi_1^{s,t,z}1\ox o(w_1,\dots,w_z,u_1,\dots,u_s)\ox v_{1}\dots v_{t}\\
+\sum_{\sigma,0<r<t}(-1)^{sz}(\xi_{r}^{s,t,z}-\xi_{r+1}^{s,t,z})v_{\sigma(1)}\dots v_{\sigma(r)}\ox o(w_1,\dots,w_z,u_1,\dots,u_s)\ox v_{\sigma(r+1)}\dots v_{\sigma(t)}.
\ea
\end{equation}
On the other hand
\[
\ba{l}
\phi d(1\ox o(u_1,\dots, u_s)\ox v_1\dots v_t\ox o(w_1,\dots, w_z)\ox 1)\\
=-\sum_{l\in\unl{s}}(-1)^{l}u_l\phi(1\ox o(u_1,\dots,\hat{u_l},\dots, u_s)\ox v_1\dots v_t\ox o(w_1,\dots, w_z)\ox 1)\\
+\sum_{l\in\unl{s}}(-1)^{l}\phi(1\ox o(u_1,\dots,\hat{u_l},\dots, u_s)\ox u_lv_1\dots v_t\ox o(w_1,\dots, w_z)\ox 1)\\
-\sum_{l\in\unl{z}}(-1)^{l+s}\phi(1\ox o(u_1,\dots, u_s)\ox v_1\dots v_tw_l\ox o(w_1,\dots,\hat{w_l}\dots, w_z)\ox 1)\\
+\sum_{l\in\unl{z}}(-1)^{l+s}\phi(1\ox o(u_1,\dots, u_s)\ox v_1\dots v_t\ox o(w_1,\dots,\hat{w_l}\dots, w_z)\ox 1)w_l\\
\ea
\]
\[
\ba{l}
=-\sum_{\sigma,l\in\unl{s}}(-1)^{(s-1)z+z+l}\xi_r^{s-1,t,z}v_{\sigma(1)}\dots v_{\sigma(r-1)}u_l\ox o(w_1,\dots,w_z,v_{\sigma(r)},u_1,\dots,\hat{u_l},\dots,u_s)\ox v_{\sigma(r+1)}\dots v_{\sigma(t)}\\
+\sum_{\sigma,l\in\unl{s}} (-1)^{(s-1)z+z+l}\xi_{r+1}^{s-1,t+1,z}v_{\sigma(1)}\dots v_{\sigma(r)}\ox o(w_1,\dots,w_z,u_l,u_1,\dots,\hat{u_l},\dots,u_s)\ox v_{\sigma(r+1)}\dots v_{\sigma(t)}\\
+\sum_{\sigma,l\in\unl{s}} (-1)^{(s-1)z+z+l}r\xi_{r+1}^{s-1,t+1,z}v_{\sigma(1)}\dots v_{\sigma(r-1)}u_l\ox o(w_1,\dots,w_z,v_{\sigma(r)},u_1,\dots,\hat{u_l},\dots,u_s)\ox v_{\sigma(r+1)}\dots v_{\sigma(t)}\\
+\sum_{\sigma,l\in\unl{s}} (-1)^{(s-1)z+z+l}(t-r+1)\xi_{r}^{s-1,t+1,z}v_{\sigma(1)}\dots v_{\sigma(r-1)}\ox o(w_1,\dots,w_z,v_{\sigma(r)},u_1,\dots,\hat{u_l},\dots,u_s)\ox u_l v_{\sigma(r+1)}\dots v_{\sigma(t)}\\
+\sum_{l\in\unl{z}}(-1)^{s(z-1)+(z-1)+l+s}\xi^{s,t,z-1}_r v_{\sigma(1)}\dots v_{\sigma(r-1)}\ox o(w_1,\dots,\hat{w_l},\dots,w_z,v_{\sigma(r)},u_1,\dots, u_s)\ox w_lv_{\sigma(r+1)}\dots v_{\sigma(t)}\\
-\sum_{\sigma,l\in\unl{s}} (-1)^{s(z-1)+(z-1)+l+s}r\xi_{r+1}^{s,t+1,z-1}v_{\sigma(1)}\dots v_{\sigma(r-1)}w_l\ox o(w_1,\dots,\hat{w_l},\dots ,w_z,v_{\sigma(r)},u_1,\dots,u_s)\ox v_{\sigma(r+1)}\dots v_{\sigma(t)}\\
-\sum_{\sigma,l\in\unl{s}} (-1)^{s(z-1)+(z-1)+l+s}(t-r+1)\xi_{r}^{s,t+1,z-1}v_{\sigma(1)}\dots v_{\sigma(r-1)}\ox o(\dots,\hat{w_l},\dots,v_{\sigma(r)},u_1,\dots)\ox w_l v_{\sigma(r+1)}\dots v_{\sigma(t)}\\
-\sum_{\sigma,l\in\unl{s}} (-1)^{s(z-1)+(z-1)+l+s}\xi_{r+1}^{s,t+1,z-1}v_{\sigma(1)}\dots v_{\sigma(r)}\ox o(w_1,\dots,\hat{w_l},\dots,w_z,w_l,u_1,\dots,u_s)\ox v_{\sigma(r+1)}\dots v_{\sigma(t)}\\
\ea
\]
(reduce signs)
\[
\ba{l}
=-\sum_{\sigma,l\in\unl{s}}(-1)^{sz+l}\xi_r^{s-1,t,z}v_{\sigma(1)}\dots v_{\sigma(r-1)}u_l\ox o(w_1,\dots,w_z,v_{\sigma(r)},u_1,\dots,\hat{u_l},\dots,u_s)\ox v_{\sigma(r+1)}\dots v_{\sigma(t)}\\
+\sum_{\sigma,l\in\unl{s}} (-1)^{sz+l}\xi_{r+1}^{s-1,t+1,z}v_{\sigma(1)}\dots v_{\sigma(r)}\ox o(w_1,\dots,w_z,u_l,u_1,\dots,\hat{u_l},\dots,u_s)\ox v_{\sigma(r+1)}\dots v_{\sigma(t)}\\
+\sum_{\sigma,l\in\unl{s}} (-1)^{sz+l}r\xi_{r+1}^{s-1,t+1,z}v_{\sigma(1)}\dots v_{\sigma(r-1)}u_l\ox o(w_1,\dots,w_z,v_{\sigma(r)},u_1,\dots,\hat{u_l},\dots,u_s)\ox v_{\sigma(r+1)}\dots v_{\sigma(t)}\\
+\sum_{\sigma,l\in\unl{s}} (-1)^{sz+l}(t-r+1)\xi_{r}^{s-1,t+1,z}v_{\sigma(1)}\dots v_{\sigma(r-1)}\ox o(w_1,\dots,w_z,v_{\sigma(r)},u_1,\dots,\hat{u_l},\dots,u_s)\ox u_l v_{\sigma(r+1)}\dots v_{\sigma(t)}\\
-\sum_{\sigma,l\in\unl{z}}(-1)^{sz+z+l}\xi^{s,t,z-1}_r v_{\sigma(1)}\dots v_{\sigma(r-1)}\ox o(w_1,\dots,\hat{w_l},\dots,w_z,v_{\sigma(r)},u_1,\dots, u_s)\ox w_lv_{\sigma(r+1)}\dots v_{\sigma(t)}\\
+\sum_{\sigma,l\in\unl{z}} (-1)^{sz+z+l}r\xi_{r+1}^{s,t+1,z-1}v_{\sigma(1)}\dots v_{\sigma(r-1)}w_l\ox o(w_1,\dots,\hat{w_l},\dots ,w_z,v_{\sigma(r)},u_1,\dots,u_s)\ox v_{\sigma(r+1)}\dots v_{\sigma(t)}\\
+\sum_{\sigma,l\in\unl{z}} (-1)^{sz+z+l}(t-r+1)\xi_{r}^{s,t+1,z-1}v_{\sigma(1)}\dots v_{\sigma(r-1)}\ox o(w_1,\dots,\hat{w_l},\dots, w_z,v_{\sigma(r)},u_1,\dots, u_s)\ox w_l v_{\sigma(r+1)}\dots v_{\sigma(t)}\\
+\sum_{\sigma,l\in\unl{z}} (-1)^{sz+z+l}\xi_{r+1}^{s,t+1,z-1}v_{\sigma(1)}\dots v_{\sigma(r)}\ox o(w_1,\dots,\hat{w_l},\dots,w_z,w_l,u_1,\dots,u_s)\ox v_{\sigma(r+1)}\dots v_{\sigma(t)}\\
\ea
\]
(compare terms with $o(w_1,\dots,w_z,u_1,\dots,u_s)$)
\[
\ba{l}
=-\sum_{\sigma,l\in\unl{s}}(-1)^{sz+l}\xi_r^{s-1,t,z}v_{\sigma(1)}\dots v_{\sigma(r-1)}u_l\ox o(w_1,\dots,w_z,v_{\sigma(r)},u_1,\dots,\hat{u_l},\dots,u_s)\ox v_{\sigma(r+1)}\dots v_{\sigma(t)}\\
-\sum_{\sigma} (-1)^{sz}s\xi_{r+1}^{s-1,t+1,z}v_{\sigma(1)}\dots v_{\sigma(r)}\ox o(w_1,\dots,w_z,u_1,\dots,u_s)\ox v_{\sigma(r+1)}\dots v_{\sigma(t)}\\
+\sum_{\sigma} (-1)^{sz}z\xi_{r+1}^{s,t+1,z-1}v_{\sigma(1)}\dots v_{\sigma(r)}\ox o(w_1,\dots,w_z,u_1,\dots,u_s)\ox v_{\sigma(r+1)}\dots v_{\sigma(t)}\\
+\sum_{\sigma,l\in\unl{s}} (-1)^{sz+l}r\xi_{r+1}^{s-1,t+1,z}v_{\sigma(1)}\dots v_{\sigma(r-1)}u_l\ox o(w_1,\dots,w_z,v_{\sigma(r)},u_1,\dots,\hat{u_l},\dots,u_s)\ox v_{\sigma(r+1)}\dots v_{\sigma(t)}\\
+\sum_{\sigma,l\in\unl{s}} (-1)^{sz+l}(t-r+1)\xi_{r}^{s-1,t+1,z}v_{\sigma(1)}\dots v_{\sigma(r-1)}\ox o(w_1,\dots,w_z,v_{\sigma(r)},u_1,\dots,\hat{u_l},\dots,u_s)\ox u_l v_{\sigma(r+1)}\dots v_{\sigma(t)}\\
-\sum_{\sigma,l\in\unl{z}}(-1)^{sz+z+l}\xi^{s,t,z-1}_r v_{\sigma(1)}\dots v_{\sigma(r-1)}\ox o(w_1,\dots,\hat{w_l},\dots,w_z,v_{\sigma(r)},u_1,\dots, u_s)\ox w_lv_{\sigma(r+1)}\dots v_{\sigma(t)}\\
+\sum_{\sigma,l\in\unl{z}} (-1)^{sz+z+l}r\xi_{r+1}^{s,t+1,z-1}v_{\sigma(1)}\dots v_{\sigma(r-1)}w_l\ox o(w_1,\dots,\hat{w_l},\dots ,w_z,v_{\sigma(r)},u_1,\dots,u_s)\ox v_{\sigma(r+1)}\dots v_{\sigma(t)}\\
+\sum_{\sigma,l\in\unl{z}} (-1)^{sz+z+l}(t-r+1)\xi_{r}^{s,t+1,z-1}v_{\sigma(1)}\dots v_{\sigma(r-1)}\ox o(w_1,\dots,\hat{w_l},\dots, w_z,v_{\sigma(r)},u_1,\dots, u_s)\ox w_l v_{\sigma(r+1)}\dots v_{\sigma(t)}.\\
\ea
\]
(combine terms)
\begin{equation}\label{eq:finals2}
\ba{l}
=\sum_{\sigma,l\in\unl{s}}(-1)^{sz+l}(r\xi_{r+1}^{s-1,t+1,z}-\xi_r^{s-1,t,z})v_{\sigma(1)}\dots v_{\sigma(r-1)}u_l\ox o(w_1,\dots,w_z,v_{\sigma(r)},u_1,\dots,\hat{u_l},\dots,u_s)\ox v_{\sigma(r+1)}\dots v_{\sigma(t)}\\
+\sum_{\sigma,l\in\unl{s}} (-1)^{sz+l}(t-r+1)\xi_{r}^{s-1,t+1,z}v_{\sigma(1)}\dots v_{\sigma(r-1)}\ox o(w_1,\dots,w_z,v_{\sigma(r)},u_1,\dots,\hat{u_l},\dots,u_s)\ox u_l v_{\sigma(r+1)}\dots v_{\sigma(t)}\\
+\sum_{\sigma,l\in\unl{z}} (-1)^{sz+z+l}r\xi_{r+1}^{s,t+1,z-1}v_{\sigma(1)}\dots v_{\sigma(r-1)}w_l\ox o(w_1,\dots,\hat{w_l},\dots ,w_z,v_{\sigma(r)},u_1,\dots,u_s)\ox v_{\sigma(r+1)}\dots v_{\sigma(t)}\\
+\sum_{\sigma,l\in\unl{z}}(-1)^{sz+z+l}((t-r+1)\xi_{r}^{s,t+1,z-1}-\xi^{s,t,z-1}_r) v_{\sigma(1)}\dots v_{\sigma(r-1)}\ox o(w_1,\dots,\hat{w_l},\dots,w_z,v_{\sigma(r)},u_1,\dots, u_s)\ox w_lv_{\sigma(r+1)}\dots v_{\sigma(t)}\\
+\sum_{\sigma} (-1)^{sz}(z\xi_{r+1}^{s,t+1,z-1}-s\xi_{r+1}^{s-1,t+1,z})v_{\sigma(1)}\dots v_{\sigma(r)}\ox o(w_1,\dots,w_z,u_1,\dots,u_s)\ox v_{\sigma(r+1)}\dots v_{\sigma(t)}
\ea
\end{equation}
\par

We would like that
\[
(\ref{eq:finals1})+(\ref{eq:finals2})=F_K(1\ox o(u_1,\dots, u_s)\ox v_1\dots v_t\ox o(w_1,\dots, w_z)\ox 1)=0.
\]
To verify this equation it suffices to verify the following:
\begin{enumerate}
\item[lrEQ 1:] $\xi_r^{s,t,z}+r\xi_{r+1}^{s-1,t+1,z}-\xi_r^{s-1,t,z}=0$
\item[lrEQ 2:] $(t-r+1)\xi_r^{s-1,t+1,z}-\xi_r^{s,t,z}=0$
\item[lrEQ 3:] $r\xi_{r-1}^{s,t+1,z-1}-\xi_r^{s,t,z}=0$
\item[lrEQ 4:] $\xi_r^{s,t,z}+(t-r+1)\xi_r^{s,t+1,z-1}-\xi_r^{s,t,z-1}=0$
\item[lrEQ 5:] $\xi_r^{s,t,z}-\xi_{r+1}^{s,t,z}+z\xi_{r+1}^{s,t+1,z-1}-s\xi_{r+1}^{s-1,t+1,z}=0$ for all $0<r<t$.
\item[lrEQ 6:] $z\xi^{s,t+1,z-1}_{t+1}-s\xi^{s-1,t+1,z}_{t+1}+\xi_t^{s,t,z}=0$
\item[lrEQ 7:] $z\xi^{s,t+1,z-1}_1-s\xi^{s-1,t+1,z}_1-\xi_1^{s,t,z}=0$
\end{enumerate}

\begin{proposition}
All of the equations $\mathrm{lrEQ}$ 1--7 are satisfied.
\end{proposition}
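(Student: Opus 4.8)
The plan is to verify each of lrEQ~1--7 by the direct method already used for lEQ~1--5 and rEQ~1--5: substitute the closed form
$$
\xi^{s,t,z}_r=\frac{(r+z-1)!\,(t-r+s)!}{(r-1)!\,(t-r)!\,(s+t+z)!}
$$
and reduce to an elementary identity among integers. In each of the seven equations every coefficient that appears is obtained from $\xi^{s,t,z}_r$ by one of the index shifts $r\mapsto r\pm1$, $s\mapsto s-1$, $t\mapsto t+1$, $z\mapsto z-1$ (or a composite), so after substitution each summand is an explicit integer prefactor times a ratio $\frac{(r+z-1)!}{(r-1)!}\cdot\frac{(t-r+s)!}{(t-r)!}\cdot\frac{1}{(s+t+z)!}$ with its arguments perturbed by small constants. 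One then clears a common denominator --- $(r-1)!\,(t-r)!\,(s+t+z)!$, or $r!\,(t-r)!\,(s+t+z)!$, or $t!\,(s+t+z)!$, depending on the equation --- and factors out the smallest surviving product of two factorials. What is left is in every case either an exact cancellation (for lrEQ~2 and lrEQ~3 the shift already makes the two terms coincide) or the vanishing of a short linear expression in $r,s,t,z$: lrEQ~1 collapses to $(t-r+s)+(r+z)-(s+t+z)=0$, lrEQ~4 to $(r+z-1)+(t-r+s+1)-(s+t+z)=0$, lrEQ~5 to $(r+z)\big[(t-r+s)-(t-r)-s\big]=0$, lrEQ~7 to $(t+s)-s-t=0$, and lrEQ~6 to the collapse $(t+z)(t+z-1)!=(t+z)!$. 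Each check is two or three lines, and I would present all seven as a single itemized list, one identity per line, in the style of the two preceding lemmas.

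The arithmetic itself is routine; the step I expect to be the genuine obstacle is not any one identity but the bookkeeping behind the list, namely the passage from the two long displays~(\ref{eq:finals1}) and~(\ref{eq:finals2}) to the claim that $(\ref{eq:finals1})+(\ref{eq:finals2})=0$ is equivalent to lrEQ~1--7. Here one must check that every monomial type occurring in $K_{s+z}$ on either side has been collected into the correct family: an extracted factor $u_l$ appearing to the left of the middle tensor slot (lrEQ~1) or to its right (lrEQ~2); an extracted $w_l$ to the left (lrEQ~3) or to the right (lrEQ~4); the interior term with middle factor $o(w_1,\dots,w_z,u_1,\dots,u_s)$ and $0<r<t$ (lrEQ~5); and the two boundary cases $r=t$ (lrEQ~6) and $r=1$ (lrEQ~7). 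This matching is visible from the displays once the signs $(-1)^{sz}$, $(-1)^{sz+l}$, $(-1)^{sz+z+l}$ occurring there have been reduced as already done in the derivation; confirming it for all seven families is the real content.

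As a sanity check one may note that, upon setting $z=0$ or $s=0$, the first five identities degenerate into close relatives of lEQ~1--5 and rEQ~1--5 already verified, whereas the boundary equations lrEQ~6--7 acquire an extra $\mu\otimes\id$ contribution in those degenerate ranges and so are genuinely new. This cross-check is reassuring but not logically necessary, since the substitution argument above establishes lrEQ~1--7 directly for all $s,z>0$.
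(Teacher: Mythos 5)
Your proposal is correct and follows essentially the same route as the paper: substitute the closed form $\xi^{s,t,z}_r=\frac{(r+z-1)!(t-r+s)!}{(r-1)!(t-r)!(s+t+z)!}$ into each of lrEQ 1--7, clear the common factorial denominator, and reduce to a short linear identity in $r,s,t,z$ (with lrEQ 2 and 3 cancelling outright after the index shift), exactly as the paper does in its itemized verification. A couple of your stated ``collapsed'' identities (for lrEQ 5 and 6) are grouped slightly differently from what the substitution literally yields, but the discrepancy is cosmetic and the method and conclusion agree with the paper's proof.
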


\begin{proof}
For lrEQ 1:
\[
r\xi_{r+1}^{s-1,t+1,z}=\frac{(r+z)!(t-r+s-1)!}{(r-1)!(t-r)!(s+t+z)!}
\]
\[
\xi_r^{s-1,t,z}=\frac{(r+z-1)!(t-r+s-1)!}{(r-1)!(t-r)!(s+t+z-1)!}=\frac{(s+t+z)(r+z-1)!(t-r+s-1)!}{(r-1)!(t-r)!(s+t+z)!}.
\]
Whence
\[
\ba{l}
(r-1)!(t-r)!(s+t+z)!(r\xi_{r+1}^{s-1,t+1,z}-\xi_r^{s-1,t,z})\\
=(r+z)!(t-r+s-1)!-(s+t+z)(r+z-1)!(t-r+s-1)!\\
=(r+z-1)!(t-r+s-1)!(r+z-(s+t+z))\\
=(r+z-1)!(t-r+s-1)!(r-s-t)\\
=-(r+z-1)!(t-r+s-1)!(t-r+s)\\
=-(r+z-1)!(t-r+s)!.
\ea
\]
Whence
\[
r\xi_{r+1}^{s-1,t+1,z}-\xi_r^{s-1,t,z}=\frac{-(r+z-1)!(t-r+s)!}{(r-1)!(t-r)!(s+t+z)!}=-\xi_r^{s,t,z}
\]
and
\[
\xi_r^{s,t,z}+r\xi_{r+1}^{s-1,t+1,z}-\xi_r^{s-1,t,z}=0.\ \ \surd
\]
\par

For lrEQ 2:
\[
(t-r+1)\xi^{s-1,t+1,z}_r=\frac{(t-r+1)(r+z-1)!(t-r+s)!}{(r-1)!(t-r+1)!(s+t+z)!}=\frac{(r+z-1)!(t-r+s)!}{(r-1)!(t-r)!(s+t+z)!}=\xi_r^{s,t,z}.
\]
So
\[
(t-r+1)\xi^{s-1,t+1,z}_r-\xi_r^{s,t,z}=0.\ \ \surd
\]
\par

For lrEQ 3:
\[
r\xi^{s,t+1,z-1}_{r+1}=\frac{(r+z-1)!(t-r+s)!}{(r-1)!(t-r)!(s+t+z)!}=\xi_r^{s,t,z}
\ \ \Rightarrow\ \ r\xi^{s,t+1,z-1}_{r+1}-\xi_r^{s,t,z}=0.\ \ \surd
\]
\par

For lrEQ 4:
\[
(t-r+1)\xi_r^{s,t+1,z-1}=\frac{(r+z-2)!(t-r+s+1)!}{(r-1)!(t-r)!(s+t+z)!}
\]
and
\[
\xi_r^{s,t,z-1}=\frac{(r+z-2)!(t-r+s)!}{(r-1)!(t-r)!(s+t+z-1)!}=\frac{(s+t+z)(r+z-2)!(t-r+s)!}{(r-1)!(t-r)!(s+t+z)!}.
\]
Thus
\[
\ba{l}
(r-1)!(t-r)!(s+t+z-1)!\big((t-r+1)\xi_r^{s,t+1,z-1}-\xi_r^{s,t,z-1}\big)\\
=(r+z-2)!(t-r+s+1)!-(s+t+z)(r+z-2)!(t-r+s)!\\
=(r+z-2)!(t-r+s)!(t-r+s+1-(s+t+z))\\
=(r+z-2)!(t-r+s)!(-r+1-z)\\
=-(r+z-2)!(t-r+s)!(r+z-1)\\
=-(r+z-1)!(t-r+s)!
\ea
\]
and
\[
(t-r+1)\xi_r^{s,t+1,z-1}-\xi_r^{s,t,z-1}=-\frac{(r+z-1)!(t-r+s)!}{(r-1)!(t-r)!(s+t+z-1)!}=-\xi_r^{s,t,z}
\]
\[
\Rightarrow \xi_r^{s,t,z}+(t-r+1)\xi_r^{s,t+1,z-1}-\xi_r^{s,t,z-1}=0.\ \ \surd
\]

For lrEQ 5:
\[
z\xi_{r+1}^{s,t+1,z-1}=\frac{z(r+z-1)!(t-r+s)!}{r!(t-r)!(s+t+z)!}
\]
and
\[
s\xi_{r+1}^{s-1,t+1,z}=\frac{s(r+z)!(t-r+s-1)!}{r!(t-r)!(s+t+z)!}
\]
and
\[
\xi_{r+1}^{s,t,z}=\frac{(r+z)!(t-r+s-1)!}{r!(t-r-1)!(s+t+z)!}=\frac{(t-r)(r+z)!(t-r+s-1)!}{r!(t-r)!(s+t+z)!}
\]
Then
\[
\ba{l}
r!(t-r)!(s+t+z)!(-\xi_{r+1}^{s,t,z}+z\xi_{r+1}^{s,t+1,z-1}-s\xi_{r+1}^{s-1,t+1,z})\\
=-(t-r)(r+z)!(t-r+s-1)!+z(r+z-1)!(t-r+s)!-s(r+z)!(t-r+s-1)!\\
=-(r+z-1)!(t-r+s-1)!((t-r)(r+z)-z(t-r+s)+s(r+z))\\
=-(r+z-1)!(t-r+s-1)!((t-r)(r+z)-z(t-r)-zs+s(r+z))\\
=-(r+z-1)!(t-r+s-1)!((t-r)r+sr)\\
=-r(r+z-1)!(t-r+s-1)!(t-r+s)\\
=-r(r+z-1)!(t-r+s)!.
\ea
\]
and
\[
-\xi_{r+1}^{s,t,z}+z\xi_{r+1}^{s,t+1,z-1}-s\xi_{r+1}^{s-1,t+1,z}=-\frac{(r+z-1)!(t-r+s)!}{(r-1)!(t-r)!(s+t+z)!}=-\xi_r^{s,t,z}
\]
\[
\Rightarrow \xi_r^{s,t,z}-\xi_{r+1}^{s,t,z}+z\xi_{r+1}^{s,t+1,z-1}-s\xi_{r+1}^{s-1,t+1,z}=0.\ \ \surd
\]

For lrEQ 6:
\[
z\xi_{t+1}^{s,t+1,z-1}-s\xi_{t+1}^{s-1,t+1,z}=\frac{z(t+z-1)!s!-s(t+z)!(s-1)!}{t!(s+t+z)!}
\]
so that
\[
\ba{l}
t!(s+t+z)!(z\xi_{t+1}^{s,t+1,z-1}-s\xi_{t+1}^{s-1,t+1,z})\\
=z(t+z-1)!s!-s(t+z)!(s-1)!\\
=(t+z-1)!(s-1)!(zs-s(t+z))\\
=-st(t+z-1)!(s-1)!\\
=-t(t+z-1)!s!.
\ea
\]
This implies
\[
z\xi_{t+1}^{s,t+1,z-1}-s\xi_{t+1}^{s-1,t+1,z}=-\frac{t(t+z-1)!s!}{t!(s+t+z)!}=\frac{-(t+z-1)!s!}{(t-1)!(s+t+z)!}=-\xi^{s,t,z}_t
\]
and
\[
\xi^{s,t,z}_t+z\xi_{t+1}^{s,t+1,z-1}-s\xi_{t+1}^{s-1,t+1,z}=0.\ \ \surd
\]

For lrEQ 7:
\[
\ba{l}
t!(s+t+z)!(\xi^{s,t+1,z-1}_1-s\xi_1^{s-1,t+1,z})=z(z-1)!(t+s)!-s(z)!(t+s-1)!\\
=(z-1)!(t+s-1)!(z(t+s)-sz)\\
=(z-1)!(t+s-1)!zt\\
=z!(t+s-1)!t
\ea
\]
and
\[
\xi^{s,t+1,z-1}_1-s\xi_1^{s-1,t+1,z}=\frac{z!(t+s-1)!}{(t-1)!(s+t+z)!}=\xi_1^{s,t,z}
\]
so that
\[
\xi^{s,t+1,z-1}_1-s\xi_1^{s-1,t+1,z}-\xi_1^{s,t,z}=0.\ \ \surd
\]
\end{proof}

We have now checked all possible cases and find that, indeed, $d\phi+\phi d=F_K$,
concluding the proof of Proposition~\ref{prop:phimap},
and therefore also of its restatement Lemma~\ref{lemma:phimap}.


\bibliographystyle{abbrv}

\def\cprime{$'$}

\end{document}